\renewcommand{\[}{\left[}
\newcommand{\R}{{\mathbb R}}		
\newcommand{\C}{{\mathcal C}}           
\newcommand{\cn}{\colon}                
\newcommand{\area}{\mathop{\rm Area}}
\newcommand{\lat}{{\mathcal L}}
\newcommand{\Z}{{\mathbb Z}}
\newcommand{\bmat}{\left[\begin{matrix}}
\newcommand{\emat}{\end{matrix}\right]}
\newcommand{\ec}{{\kappa}}               
\newcommand{\ac}{{\varkappa}}               
\newcommand{\nn}{\nonumber}	   	
\newcommand{\co}{{\mathop{\rm\bf c}}}
\newcommand{\si}{{\mathop{\rm\bf s}}}
\newcommand{\ol}{\overline}
\renewcommand{\phi}{\varphi}
\numberwithin{equation}{section} 
\newcommand{\bi}[1]{{\bf\itshape #1}}
\renewcommand{\t}{\text{\rm\bf t}}
\newcommand{\n}{\text{\rm\bf n}}
\newcommand{\m}{{\mathfrak m}}
\newtheorem{thm}{Theorem}[section]
\newtheorem{introthm}{Theorem}
\newtheorem{lemma}[thm]{Lemma}
\newtheorem{prop}[thm]{Proposition}
\newtheorem{cor}[thm]{Corollary}
\theoremstyle{definition}
\newtheorem{defn}[thm]{Definition}
\theoremstyle{remark}
\newtheorem{remark}[thm]{Remark}
\newtheorem{example}[thm]{Example}
\newcounter{mnotecount}  
\renewcommand{\themnotecount}{\arabic{mnotecount}} 
\newcommand{\mnote}[1]
{\protect{\stepcounter{mnotecount}}$^{\mbox{\footnotesize  $
      \bullet$\themnotecount}}$\marginpar{\parbox[b]{1.2in}{\raggedright\tiny\em
	 \themnotecount:\! #1}} }
\newcommand{\f}{\partial}
\newcommand{\aff}{\Lambda}
\author{Ralph Howard}
\address{Department of Mathematics,
University of South Carolina,
Columbia, SC 29208}
\email{howard@math.sc.edu}
\urladdr{www.math.sc.edu/$\sim$howard}
\title[Bounding lattice points by affine arc length]{Shur type comparison theorems
for affine curves with application to lattice point estimates}
\subjclass{52A15, 34C10, 11P21}
\keywords{convex curves, affine arc length, affine curvature,
comparison theorems, lattice points on a curve.}
\begin{document}

\begin{abstract} 
    If $c, \ol c\cn [a,b]\to \R^2$ are two convex planar curve parameterized
    by affine  arc length and $A\cn [a,b]\to [0,\infty)$ is the
    area bounded by the restriction $c\big|_{[a,s]}$ and the segment
    between $c(a)$ and $c(s)$ with $\ol A$ the corresponding 
    function for $\ol c$, and the affine curvature are related
    by $\ac(s) \le \ol\ac(s)$, then $A(s)\ge \ol A(s)$.  Also
    for any point of a convex curve we define \emph{adapted
    affine coordinates} centered at the point and give
    sharp estimates on the coordinates of the curve in 
    terms of bounds on the curvature.  Proving these bounds
    involves generalizing classical comparison theorems
    of Strum-Liouville type to higher order and nonhomogenous
    equations.    
    These estimates allow us
    to give sharp bounds on the areas of
    inscribed triangles in terms of affine curvature and
    the affine distance between the vertices.  These inequalities 
    imply upper bounds of the number
    of lattice points on a convex curve in terms of its
    affine arc length.  
\end{abstract}

\maketitle

\section{Introduction.}
\label{sec:intro}

The results in this paper were motivated first by wanting to give extensions,
or more precisely analogues, of some of the results of the Euclidean
differential geometry of curves to the affine setting;.  The second motivation
was to use these results to give upper bounds on the number of lattice points
on a convex curve in terms of the affine arc length and bounds on affine
curvature of the curve.

Let $\C$ be an embedded connected curve in the plane.  Then we call $\C$ a
\bi{convex} if and only if $\C$ is on the boundary of its convex hull.  The
curve is a \bi{closed convex} curve if and only if it is the boundary of a
bounded convex set.  

A basic result in the Euclidean differential geometry of curves is the
comparison theorem of A.~Schur: if $\C$ and $\ol \C$ are two convex non-closed
curves of the same length and the curvature of $\ol \C$ is pointwise greater
than the curvature of $\C$, then the distance between the endpoints of $\ol \C$
is smaller than the distance between the endpoints of $\C$. That is the greater
the curvature the smaller the distance between the endpoints.  (See \cite[Page
36]{Chern-curves} or \cite[Thm.~2-19, Page 31]{Gugg}  for the precise statement
and a proof.)  The original proof is in \cite{Schur-curve}.  Some Some
extensions and generalization are given in \cite{Sullivan-tot-curv} (higher
dimensions and minimal regularity), \cite{Epstein-Schur} (version in hyperbolic
space) and \cite{Lopez-Schur} (version in the Lorentzian plane).

In affine geometry the distance between points in the plane is not defined, but
the area bounded by the segment joining two points on the curve and the curve
(see Figure \ref{fig:intro1}) is defined.  Recall a \bi{special affine motion}
of $\R^2$ is a map of the form $v \mapsto Mv+b$ where $M$ is a linear map with
$\det(M)=1$ and $b\in \R^2$.  (See Section~\ref{sec:prelims} for our
conventions and definitions
of affine arc length and affine
curvature.) The affine version of Shur's Theorem is the greater curvature
the smaller the bounded area:

\begin{introthm}\label{introthmA}
	Let $c, \ol c\cn [a,b]\to \R^2$ be  curves parameterized 
	by affine arc length with affine curvatures $\ac$ and $\ol \ac$
	respectively
	and let $A$ and $\ol A$ be the respective areas bounded by the curves 
	and the segment between their endpoints as in Figure \ref{fig:intro1}
	(see Section~\ref{sec:prelims} for the precise definition of $A$ and $\ol A$).
	Assume that $\ol \ac$ satisfies any one of the following three
	conditions (a) $\ol \ac$ is constant, (b) $\ol\ac \le 0$, or (c)
	$\ol \ac\le k_1$ where $k_1$ is a positive constant with
	$k_1\le (\pi/(b-a))^2$.  Then
	\begin{align*}
		\ac \le \ol \ac \text{ on } [a,b] \quad &\text{implies} \quad A\ge \ol A\\
		\ac \ge \ol \ac \text{ on } [a,b] \quad &\text{implies} \quad A\le \ol A
	\end{align*}
	In either case $A=\ol{A}$ implies $c$ is the image of $\ol c$ by
	a special affine motion.  
\end{introthm}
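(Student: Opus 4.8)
The plan is to reduce the geometric comparison to a second-order Sturm--Liouville comparison by differentiating the area functions. Working with the affine Frenet equations (see Section~\ref{sec:prelims}), a curve parameterized by affine arc length satisfies $\det(c',c'')=1$ and $c'''=-\ac\,c'$. Writing $A(s)=\tfrac12\int_a^s \det\big(c(\sigma)-c(a),\,c'(\sigma)\big)\,d\sigma$ for the bounded area, I would compute $A'=\tfrac12\det(c-c(a),c')$ and $A''=\tfrac12\det(c-c(a),c'')$, and then, using $\det(c',c'')=1$ together with $c'''=-\ac\,c'$, obtain the third-order nonhomogeneous equation
\begin{equation*}
A'''+\ac\,A'=\tfrac12,\qquad A(a)=A'(a)=A''(a)=0,
\end{equation*}
with the analogous equation for $\ol A$. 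Setting $p=A'$ reduces everything to the second-order problem $p''+\ac\,p=\tfrac12$ with $p(a)=p'(a)=0$, and likewise for $\ol p=\ol A'$. Convexity of the curves gives $p,\ol p\ge0$, since the enclosed area is nondecreasing.

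Next I would form $w=p-\ol p=A'-\ol A'$. Subtracting the two equations and organizing the difference around $\ol\ac$ gives
\begin{equation*}
w''+\ol\ac\,w=(\ol\ac-\ac)\,p,\qquad w(a)=w'(a)=0,
\end{equation*}
so the sign of the forcing term is governed entirely by the curvature comparison together with $p\ge0$: if $\ac\le\ol\ac$ then $(\ol\ac-\ac)p\ge0$, and if $\ac\ge\ol\ac$ then $(\ol\ac-\ac)p\le0$. It therefore suffices to show that the zero-initial-value solution of $w''+\ol\ac\,w=f$ inherits the sign of $f$. Representing $w$ through the Cauchy (one-sided Green's) function $K(s,t)$ built from a fundamental system of $v''+\ol\ac\,v=0$, namely $w(s)=\int_a^s K(s,t)f(t)\,dt$, this amounts to proving $K(s,t)\ge0$ for $a\le t\le s\le b$, which is exactly \emph{disconjugacy} of $v''+\ol\ac\,v=0$ on $[a,b]$: no nontrivial solution has two zeros in the interval. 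Once the sign of $w$ is known, integrating from $a$ (where $A=\ol A=0$) yields $A-\ol A=\int_a^s w$, giving the claimed area inequalities.

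The heart of the argument, and the step I expect to be the main obstacle, is establishing this disconjugacy under each of the three hypotheses; this is where the classical Sturm comparison theorem must be invoked and adapted. For (b) $\ol\ac\le0$ the equation is nonoscillatory and disconjugacy is immediate. For (c) $\ol\ac\le k_1\le(\pi/(b-a))^2$ I would compare with the constant-coefficient equation $v''+k_1v=0$, whose consecutive zeros are spaced $\pi/\sqrt{k_1}\ge b-a$ apart; Sturm's theorem then forces solutions of $v''+\ol\ac\,v=0$ to have at most one zero in $[a,b]$, and the threshold $(\pi/(b-a))^2$ is sharp precisely because it is the first conjugate-point value. The genuinely delicate case is (a): when $\ol\ac$ is a large positive constant the comparison equation \emph{does} oscillate on long intervals, so generic disconjugacy fails and $K$ is not everywhere nonnegative. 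Here I would instead use the explicit solution $\ol p(s)=\tfrac{1}{2\ol\ac}\big(1-\cos(\sqrt{\ol\ac}\,(s-a))\big)$ together with the explicit constant-coefficient kernel to run the comparison directly, exploiting the special nonnegative structure of the conic case. Finally, for the rigidity statement, equality $A(b)=\ol A(b)$ combined with the fixed sign of $w$ forces $w\equiv0$ and hence $(\ol\ac-\ac)p\equiv0$; since $p>0$ on $(a,b]$ this gives $\ac\equiv\ol\ac$, so $c$ and $\ol c$ solve the same linear system $c'''=-\ac\,c'$ with affine Frenet frames that can be matched at $s=a$ by a unique $M\in\mathrm{SL}(2,\R)$, whence $c$ and $\ol c$ agree up to a special affine motion.
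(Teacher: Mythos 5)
Your reduction to second order works under hypotheses (b) and (c), and there it is essentially equivalent to the paper's argument (the paper's Lemma \ref{lem:Sturm} and Proposition \ref{prop:cont-kernels}(a) are your disconjugacy step). But it genuinely breaks down in case (a), which you yourself flag as the delicate one and then leave unresolved. The difficulty is not a missing technicality: when $\ol\ac=k$ is a large positive constant, the function $w=A'-\ol A'$ really can change sign, since its kernel representation is $w(s)=\int_a^s \si_k(s-t)\,(k-\ac(t))\,A'(t)\,dt$ and $\si_k(s-t)=\sin(\sqrt{k}\,(s-t))/\sqrt{k}$ oscillates as soon as $b-a>\pi/\sqrt{k}$. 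So no argument can establish a fixed sign for $w$ in case (a), and ``exploiting the special nonnegative structure of the conic case'' cannot rescue a scheme that works at the level of $A'$; knowing $\ol p\ge 0$ explicitly does not help. Your rigidity argument inherits the same gap, since it too invokes the fixed sign of $w$.

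The repair --- and this is exactly what the paper does via Theorem \ref{thm:comp-y'>0} with $n=3$, $\ell=1$, Proposition \ref{prop:cont-kernels}(b), and Theorem \ref{thm:3pos_conds}(a) --- is to run the comparison one level up, at $A$ itself rather than at $A'$. Concretely, integrate your representation of $w$ and interchange the order of integration:
\begin{align*}
A(s)-\ol A(s)&=\int_a^s w(\sigma)\,d\sigma
=\int_a^s\left(\int_t^s \si_k(\sigma-t)\,d\sigma\right)(k-\ac(t))\,A'(t)\,dt\\
&=\int_a^s \frac{1-\co_k(s-t)}{k}\,(k-\ac(t))\,A'(t)\,dt,
\end{align*}
and the integrated kernel $(1-\co_k(s-t))/k$ is nonnegative for \emph{every} constant $k$, with no restriction on $b-a$. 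This kernel is precisely the Lagrange kernel of the third-order operator $y\mapsto y'''+ky'$; its forward positivity is strictly weaker than disconjugacy of $v''+kv=0$, which is exactly why case (a) survives even though your second-order criterion fails there. With this change the sign conclusions hold in all three cases, and the equality case goes through as in the paper: $A(b)=\ol A(b)$ forces $(\ol\ac-\ac)A'\equiv 0$, hence $\ac\equiv\ol\ac$ (using $A'>0$ away from $s=a$), and then Theorem \ref{thm:unq} gives the special affine motion. The rest of your outline --- the derivation of $A'''+\ac A'=\tfrac12$ with vanishing initial data, positivity of $A'$ from convexity, and the Sturm comparison in cases (b) and (c) --- is correct and matches the paper.
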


\begin{figure}[ht]
\begin{overpic}[height=1.5in]{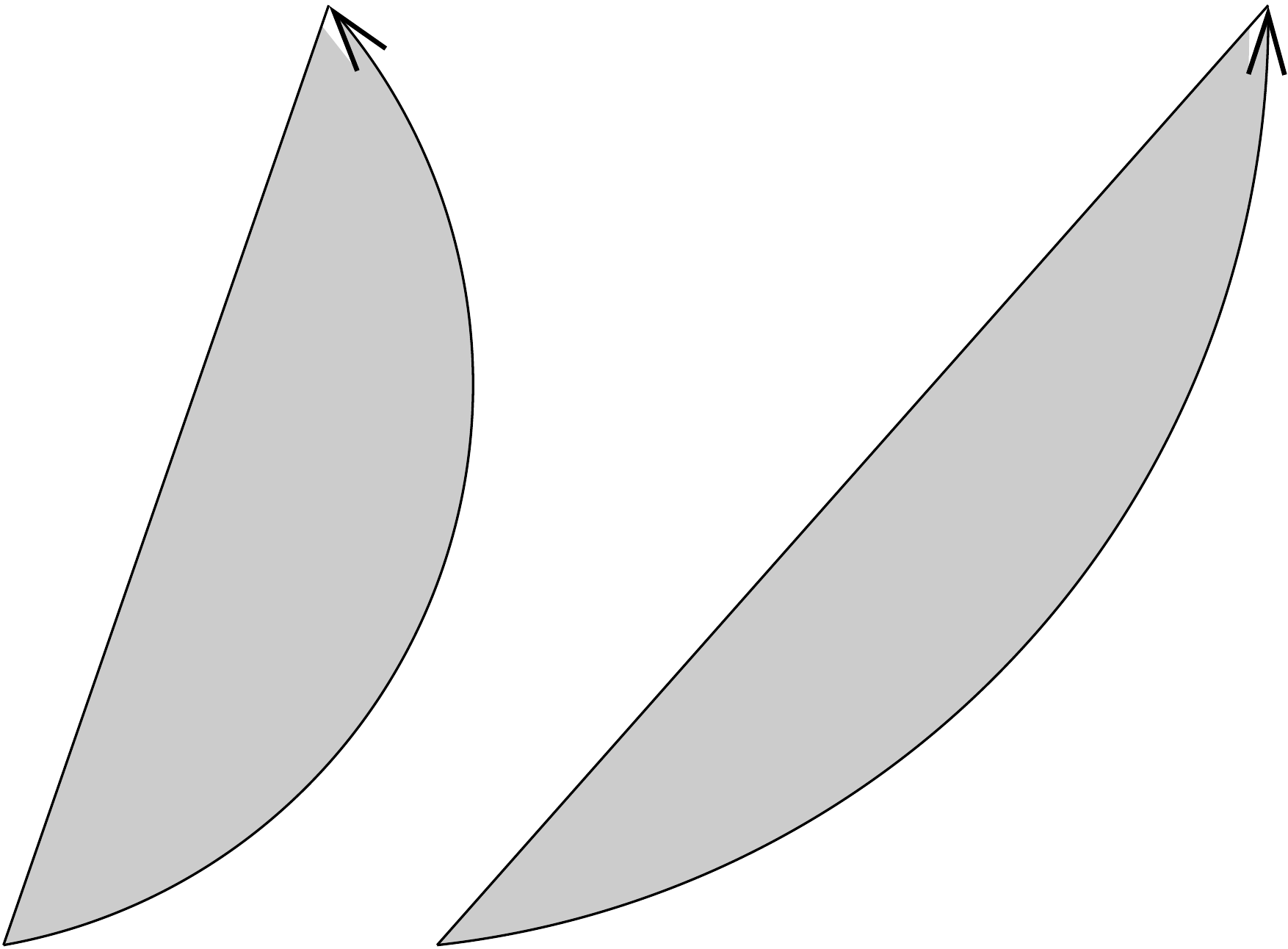}
\put(-15,0){$\ol c(a)$}
\put(21,0){$c(a)$}
\put(82,70){$c(b)$}
\put(12,70){$\ol c(b)$}
\put(37,32){$\ol c$}
\put(89,32){$c$}
\end{overpic}
\caption{The curves $c$ and $\ol c$ showing the areas bounded by
the curves and the secant segments between their endpoints.}
\label{fig:intro1}
\end{figure}

This is a special case of Theorem \ref{thm:sec-comp} below.
Besides this result we give comparison results for the areas of 
inscribed triangles (Proposition \ref{prop:inside_tri} 
Theorem \ref{thm:triangle2}) 
and coordinate systems centered at point on the curve that are
``adapted'' to the geometry of the curve (Theorem~\ref{thm:coord_bds}).

These results are based on generalizations of Sturm type comparison theorems
for homogeneous second order differential differential equations to equations
that are non-homogeneous and of higher order.  To give an example of the
relevance of such results, in Theorem \ref{sec:intro} let $A=A(b)$ be  viewed as
a function of the end point $c(b)$, then a special case of Theorem
\ref{thm:A_ode}, is that $A$ satisfies the third order initial value problem 
$$
A''' + \ac A' = \frac12, \qquad A(a)=A'(a) = A''(a)=0.
$$
Thus Theorem \ref{introthmA} follows from a comparison theorem
for differential equations of this form.  Section \ref{sec:initial_valus_problems} 
has a general theory for comparisons of solutions to linear ordinary 
differential equations.

Finally we consider the problem of bounding the number of
lattice points on a convex curve by bounds its affine arc length
and bounds on its affine curvature.    The original result about the number of
lattice points on a convex curve is result of Jarn\'{\i}k \cite{Jar} that
a closed strictly convex curve of length $L$ contains at
most $3(2\pi)^{-1/3}L^{2/3} + O(L^{1/3})$ and the exponent and constant
of the leading term are best possible.  There have been improvements 
of Jarn\'{\i}k's result which involve bounds of the higher derivatives of
the curve by Bombieri and Pila\cite{BomP} and Swinnerton-Dyer \cite{SwD}
and others.  In the case of bounds
on the fourth derivative it is natural to interpret the bounds
on the derivatives in terms of affine curvature
and to give the bounds  in terms of
the affine arc length.
In the case of arcs on ellipses this was done
in \cite{HowTri}.
Anther reason affine arc length more natural
than Euclidean arc length
for these questions is that the lattice $\Z^2$ in variant under
the group $\operatorname{SL}{2,\Z}$ of $2\times 2$ integer 
matrices of determent one.   Affine arc length is invariant 
under this group, while Euclidean arc length is not.

\begin{introthm}
Let $k_0$ and $k_1$ be constants with $k_0\le k_1$.  
Then there is a number $L_{k_0}>0$ and so that if
$k_1\le (\pi/(2L_{k_0}))^2$ (which is automatic if
$k_1\le 0$) so that if $\C$ is a convex curve 
whose affine curvature satisfies $k_0\le \ac\le k_1$
and $m = \lfloor \Lambda(\C)/(2L_{k_0})\rfloor$ 
then
$$
    \#(\Z^2\cap \C)\le 2m+2.
$$
\end{introthm}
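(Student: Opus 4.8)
The plan is to count lattice points by partitioning the convex curve into arcs of controlled affine length and showing that each such arc can contain at most a bounded number of lattice points. I would first isolate the key geometric estimate: there exists a threshold length $L_{k_0}>0$ such that any subarc of $\C$ of affine arc length at most $2L_{k_0}$ contains at most two lattice points. Granting this, the counting argument is immediate: split $\C$ (or, after choosing a basepoint, think of it as parameterized by affine arc length on an interval of length $\Lambda(\C)$) into $m+1$ consecutive pieces, where $m=\lfloor \Lambda(\C)/(2L_{k_0})\rfloor$, each of affine length at most $2L_{k_0}$ except possibly a final shorter piece. With at most two lattice points per piece and $m+1$ pieces, and accounting for shared endpoints between consecutive pieces, one arrives at a bound of the form $2(m+1)=2m+2$.

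The heart of the matter, and the step I expect to be the main obstacle, is proving that a short arc carries at most two lattice points. The strategy here is to argue by contradiction using the inscribed-triangle area estimates. Suppose three lattice points $P_1,P_2,P_3$ lie on a subarc of affine length at most $2L_{k_0}$. The triangle they span has vertices in $\Z^2$, so its Euclidean area is a half-integer; since the points are distinct and not collinear (they lie on a strictly convex arc), the area is at least $1/2$. On the other hand, I would invoke the inscribed-triangle comparison results (Proposition \ref{prop:inside_tri} and Theorem \ref{thm:triangle2}) together with the affine-curvature bound $k_0\le\ac\le k_1$ to show that the area of a triangle inscribed in an arc of small affine length is itself small—tending to zero as the arc length shrinks. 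The comparison theorems let me bound this inscribed area from above by the corresponding quantity for a reference curve of constant curvature (the role of the hypothesis $k_1\le(\pi/(2L_{k_0}))^2$ is to guarantee the comparison model behaves well and stays convex on the relevant parameter range, i.e.\ that the analogue of condition (c) in Theorem \ref{introthmA} holds on each subinterval). Choosing $L_{k_0}$ small enough that this upper bound on the inscribed area falls strictly below $1/2$ produces the contradiction.

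Carrying this out requires making the dependence on $k_0$ explicit: the inscribed-triangle area, expressed via the area function $A$ satisfying $A'''+\ac A'=\tfrac12$, must be estimated uniformly over all arcs whose curvature lies in $[k_0,k_1]$, and the worst case (largest inscribed area for given affine length) is controlled by the comparison theorem using the extreme constant curvature $k_0$. Thus $L_{k_0}$ is defined as the largest half-length for which the constant-curvature-$k_0$ model forces every inscribed triangle to have area below $1/2$. The constraint $k_1\le(\pi/(2L_{k_0}))^2$ is exactly what is needed to apply the comparison hypothesis (c) on each piece of length $2L_{k_0}$, and it is automatic when $k_1\le 0$ since then hypothesis (b) applies instead. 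The one genuinely delicate point is ensuring the inscribed triangle really is \emph{inscribed} in a single short arc so that the comparison machinery applies; this is handled by the partition into pieces of length at most $2L_{k_0}$, so that any three lattice points forced into one piece lie on an arc to which Proposition \ref{prop:inside_tri} and Theorem \ref{thm:triangle2} apply directly.
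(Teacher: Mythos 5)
Your strategy---cut $\C$ into pieces of affine length about $2L_{k_0}$ and, on each piece, play the lattice lower bound $\area(\triangle p_1p_2p_3)\ge\tfrac12$ (Proposition~\ref{prop:lat-triangle}) against the inscribed-triangle upper bound coming from the curvature comparison (Theorem~\ref{thm:triangle2})---is the same as the paper's proof of Theorem~\ref{thm:sharp_lat}, of which this statement is the case $\lat=\Z^2$, $\m=1$. The gap is in your key lemma and in the definition of $L_{k_0}$ it rests on. You want $L_{k_0}$ to be ``the largest half-length for which the constant-curvature-$k_0$ model forces every inscribed triangle to have area below $1/2$,'' and you also want that bound to fall \emph{strictly} below $1/2$ so that three lattice points give a contradiction. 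These two wishes are incompatible. In the paper's notation the inscribed-triangle bound for an arc of length $2L$ is $H_{k_0}(L)=\ol x_{k_0}(L)\,\ol y_{k_0}(L)$ (Theorem~\ref{thm:triangle2}, Lemma~\ref{lem:x_bar_y_bar_homo}), which is strictly increasing in $L$; the set of $L$ with $H_{k_0}(L)<\tfrac12$ is the half-open interval $[0,G_{k_0}(1/2))$ and has no largest element, while at the natural (and sharp) choice $L_{k_0}=G_{k_0}(1/2)$ one only has $H_{k_0}(L_{k_0})=\tfrac12$ and the strict contradiction evaporates. This is not a technicality: a closed arc of affine length exactly $2L_{k_0}$ \emph{can} contain three lattice points. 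The paper's own sharpness example shows it: for $k_0=0$ one has $L_0=G_0(1/2)=1$, and the parabola $c(s)=(s,s(s-1)/2)$ is affine unit speed with $\ac\equiv 0$ and carries the lattice points $c(0),c(1),c(2)$ on the closed arc of length $2$. So your lemma ``any subarc of affine length at most $2L_{k_0}$ contains at most two lattice points'' is false for the constant your definition points to.

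The paper closes exactly this gap with an equality analysis that your sketch does not contain. Theorem~\ref{thm:2pts2} shows that a closed arc of length $2L$ with $H_{k_0}(L)\le\tfrac12$ contains at most three points of $\Z^2$, \emph{and that three can occur only when two of them are the endpoints of the arc} (and the arc has constant curvature $k_0$). The paper therefore partitions by arc length into the \emph{half-open} pieces $\C_j=\{c(s):2(j-1)L\le s<2jL\}$: deleting the right endpoint converts ``at most $3$'' into ``at most $2$'' on every piece, and together with the final short piece this yields $2(m+1)=2m+2$ at the sharp threshold. Your passing remark about ``accounting for shared endpoints'' gestures in this direction, but the rigidity statement is the substantive step; without it your argument only proves the theorem after shrinking $L_{k_0}$ strictly below $G_{k_0}(1/2)$ (for instance via the strict bound of Proposition~\ref{prop:inside_tri}, which is the route of Theorems~\ref{thm:2pts1} and~\ref{thm:low_aff_bd}). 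That weakened version does satisfy the literal existence claim, but it cannot produce the value $L_0=1$ or the explicit equation for $L_{k_0}$ asserted right after the theorem, and the parabola example shows its lattice-point count is strictly worse than the one the theorem is meant to deliver.
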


When $k_0=0$ the constant is $L_0= 1$. In this case 
the example of  the parabolic arc $\C=\{ (s,s(s-1)/2): 0\le s \le 2m+1\} $
shows this estimate is sharp.   For $k_0<0$ the number $L=L_{k_0}$ is
the solution to 
$$
\bigg( \frac{\sinh(\sqrt{|k_0|}\,L)}{\sqrt{|k_0|}}\bigg)
\bigg( \frac{\cosh(\sqrt{|k_0|}\, L)-1}{|k_0|}        \bigg) = \frac12.
$$
In this case there are also examples where the bound is sharp,
see Section~\ref{sec:examples}. There is a similar equation
for $k_0$ when $k_0>0$.  However the  discussion and examples in 
Paragraph \ref{subsec:k>0} show these results are 
of greatest interest when $k_0\le 0$.

\section{Affine arc length, affine curvature, and the
differential equation for the area function.}
\label{sec:prelims}

We give our conventions on the geometry of affine curves.
Other sources for this material are \cite{Blaschke},
\cite{Gugg},  and \cite{Spivak2}.
If $v,w\in \R^2$ then $v\wedge w = v_1w_2-v_2w_1$ where
$v=(v_1,v_2)$ and $w=(w_1,w_2)$.  This is the determinant
of the matrix with rows $v$ and $w$.
If $I$ is an interval and $\gamma\cn I \to \R^2$ is a $C^2$
curve, where the velocity vector $\gamma'(t)$ and the acceleration
vector $\gamma''(t)$ are linearly independent for all $t\in I$,
then a direct calculation shows the one form
$$
\left( \gamma'(t)\wedge \gamma''(t)\right)^{1/3}\,dt
$$
is invariant under both $C^2$ reparameterizations and special affine
motions of $\R^2$.  The \bi{affine arc length} of $c$, denoted
$\Lambda(c)$, is 
defined by integrating this one form. Thus $\Lambda(c) = \int_a^b
(c'(t)\wedge c''(t))^{1/3}\,dt$ where $[a,b]$ is
the domain of $c$.  Equivalently if $c\cn I \to \R^2$,
then $s$ is affine arc length along $c$ if and only if 
$$
c'(s)\wedge c''(s)\equiv 1.
$$
This gives the curve has a natural orientation: the 
orientation for which the basis $c'(s)$, $c''(s)$ is always positive
(that is a right handed) along $c$.  This is the same as
the orientation that makes the one form $c'(t)\wedge c''(t)^\frac13 \,dt$
positive.
The vector $c'(s)$ is
the \bi{affine tangent vector} and $c''(s)$ is the \bi{affine normal
vector}.  
With this orientation the
Euclidean curvature, $\ec$, is positive which 
implies $c$ is locally convex
in the sense that at any point its tangent line is a locally a
support line in that near the point the curve lines in the closed
half plane bounded by the tangent line and with the affine
normal pointing into the half plane.

Taking the derivative of $c'(s)\wedge c''(s)=1$ and using
$c'(s)\wedge c'(s)=0$ gives
$$
c'(s)\wedge c'''(s)=0.
$$
This implies $c'''(s)$ is linearly dependent on $c'(s)$ and therefore
for some scalar function $\ac(s)$,
$$
c'''(s) = -\ac(s) c'(s).
$$
The function $\ac$ is the \bi{affine curvature} of $c$. (This sign is chosen
so that ellipses have constant positive affine curvature and hyperbolas 
have constant negative curvature.)  The affine curvature determines
a curve up to an affine motion.  

\begin{thm}\label{thm:unq} Let $I$ be in interval in $\R$ and
$c_1,c_2\cn I\to \R^2$ be $C^3$ affine unit speed curves that have the 
same affine curvature at each point.  Then $c_1$ and $c_2$
differ by an affine motion.  That is for some linear map $M\cn \R^2\to \R^2$
with $\det(M)=1$ and some $b\in \R^2$ where holds $c_2(s) = Mc_1(s)+b$
for all $s\in I$.\qed
\end{thm}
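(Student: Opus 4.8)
The plan is to reduce the statement to the uniqueness theorem for solutions of linear ordinary differential equations. By the definition of affine curvature recalled above, each curve satisfies the third order linear system
$$
c_i'''(s) = -\ac(s)\, c_i'(s), \qquad i = 1,2,
$$
where the common coefficient $\ac$ is continuous because each $c_i$ is $C^3$ and $c_i'$ never vanishes. Since a special affine motion carries affine unit speed curves to affine unit speed curves with the same affine curvature, my strategy is to produce a single special affine motion taking the $2$-frame of $c_1$ at one point to the $2$-frame of $c_2$ at the same point, and then to argue that the two curves agree everywhere by uniqueness of solutions with prescribed initial data.

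First I would fix a base point $s_0 \in I$. Because each $c_i$ is parameterized by affine arc length we have $c_i'(s_0) \wedge c_i''(s_0) = 1$, so in particular $(c_i'(s_0), c_i''(s_0))$ is a basis of $\R^2$ for $i = 1, 2$. There is then a unique linear map $M \cn \R^2 \to \R^2$ with $M c_1'(s_0) = c_2'(s_0)$ and $M c_1''(s_0) = c_2''(s_0)$. Writing the frames as the columns of $2\times 2$ matrices and taking determinants gives
$$
\det(M) = \frac{c_2'(s_0) \wedge c_2''(s_0)}{c_1'(s_0) \wedge c_1''(s_0)} = \frac{1}{1} = 1,
$$
so $M \in \operatorname{SL}(2,\R)$. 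Choosing $b = c_2(s_0) - M c_1(s_0)$ then makes $\tilde c(s) := M c_1(s) + b$ a special affine motion applied to $c_1$ with $\tilde c(s_0) = c_2(s_0)$, $\tilde c'(s_0) = c_2'(s_0)$, and $\tilde c''(s_0) = c_2''(s_0)$.

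Finally, since $\det(M) = 1$ we have $\tilde c'(s) \wedge \tilde c''(s) = \det(M)\,(c_1'(s) \wedge c_1''(s)) = 1$ and $\tilde c'''(s) = M c_1'''(s) = -\ac(s)\, M c_1'(s) = -\ac(s)\, \tilde c'(s)$, so $\tilde c$ solves the same linear system as $c_2$ with identical initial data at $s_0$. Applying the existence and uniqueness theorem for the scalar linear equation $y''' + \ac y' = 0$ to each coordinate forces $\tilde c \equiv c_2$ on $I$, that is $c_2(s) = M c_1(s) + b$, which is the claim. The argument is essentially routine; the only point demanding care is the verification that the frame-matching map $M$ has determinant one—this is exactly where the affine unit speed normalization $c_i' \wedge c_i'' = 1$ is used—after which invariance of the defining equation under $\operatorname{SL}(2,\R)$ and standard linear ODE uniqueness close the argument.
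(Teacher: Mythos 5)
Your proof is correct. The paper does not prove Theorem \ref{thm:unq} itself---it defers to Blaschke, Guggenheimer, and Spivak---and your argument is exactly the classical one given there: match the frames at a base point by the unique linear map $M$ (whose determinant is $1$ precisely because $c_i'(s_0)\wedge c_i''(s_0)=1$ for both curves), translate by $b=c_2(s_0)-Mc_1(s_0)$, note that $Mc_1+b$ and $c_2$ both solve the linear equation $y'''+\ac\,y'=0$ componentwise with identical initial data, and conclude by uniqueness for linear ODEs with continuous coefficients.
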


Proofs can be found in \cite{Blaschke}, \cite{Gugg}, and \cite{Spivak2}.
It is worth remarking that while this theorem only requires $c_1(s)$
and $c_2(s)$ to be $C^3$ functions of the affine arc length,
the images of these curves will be $C^4$ immersed submanifold 
of $\R^2$.  See Proposition \ref{prop:C4}
\medskip

If $c\cn I \to \R^2$ for some interval $I$, $a\in I$ and  $p_0\in \R^2$, 
define 
$$
A_{c,a,p_0}(s):= \begin{cases}
	\text{Signed area bounded by  the restriction $c\big|_{[a,s]}$ }&\\[8pt]
	\text{and the segments $\overline{p_0c(a)}$
	and $\overline{p_0c(s)}$.}
\end{cases}
$$
See Figure \ref{fig:cones}.  To give precise and more computationally
useful definition let $f$ be the function
$$
f(t,s) = (1-t)p_0 + t c(s), \qquad 0\le t\le 1 , \quad s\in I.
$$
Then $A_{c,a,p_0}(s)$ is the signed area of the set of  points
$ \{f(t,\sigma): 0\le t\le 1 \text{ and $\sigma$ between $a$ and $s$}\}  $,
which is a quantity we can compute with an integral.
The partial derivatives and Jacobian of $f$ are
\begin{align*}
\frac{\partial f}{\partial t}  &= c(s) - p_0\\
\frac{\partial f}{\partial s}  &= t c'(s)\\
 \frac{\partial f}{\partial t}  \wedge\frac{\partial f}{\partial s}  &=t (c(s)-p_0) \wedge c'(s). 
\end{align*}
So a precise definition of  $A_{c,a,p_0}$ is
\begin{align}\label{A=int_f}
A_{c,a,p_0}(s)&= \int_a^s \int_0^1 \frac{\partial f}{\partial t}(t,\sigma)  \wedge\frac{\partial f}{\partial s}(t,\sigma) \,dt\,d\sigma\\\nn
&= \int_a^s \int_0^1 t (c(\sigma)-p_0) \wedge c'(\sigma)\,dt\,d\sigma\\\nn
&=\frac12 \int_a^s (c(\sigma)-p_0) \wedge c'(\sigma)\,d\sigma.
\end{align}
In computing the area $A_{c,a,p_0}(s)$ the points where 
$f_t \wedge f_u =t (c(\sigma)-p_0) \wedge c'(\sigma)$ is positive 
the area is counted as positive, and when this is negative
the area is negative the area is negative.

\begin{figure}[hb]\footnotesize
	\begin{overpic}[width=3in]{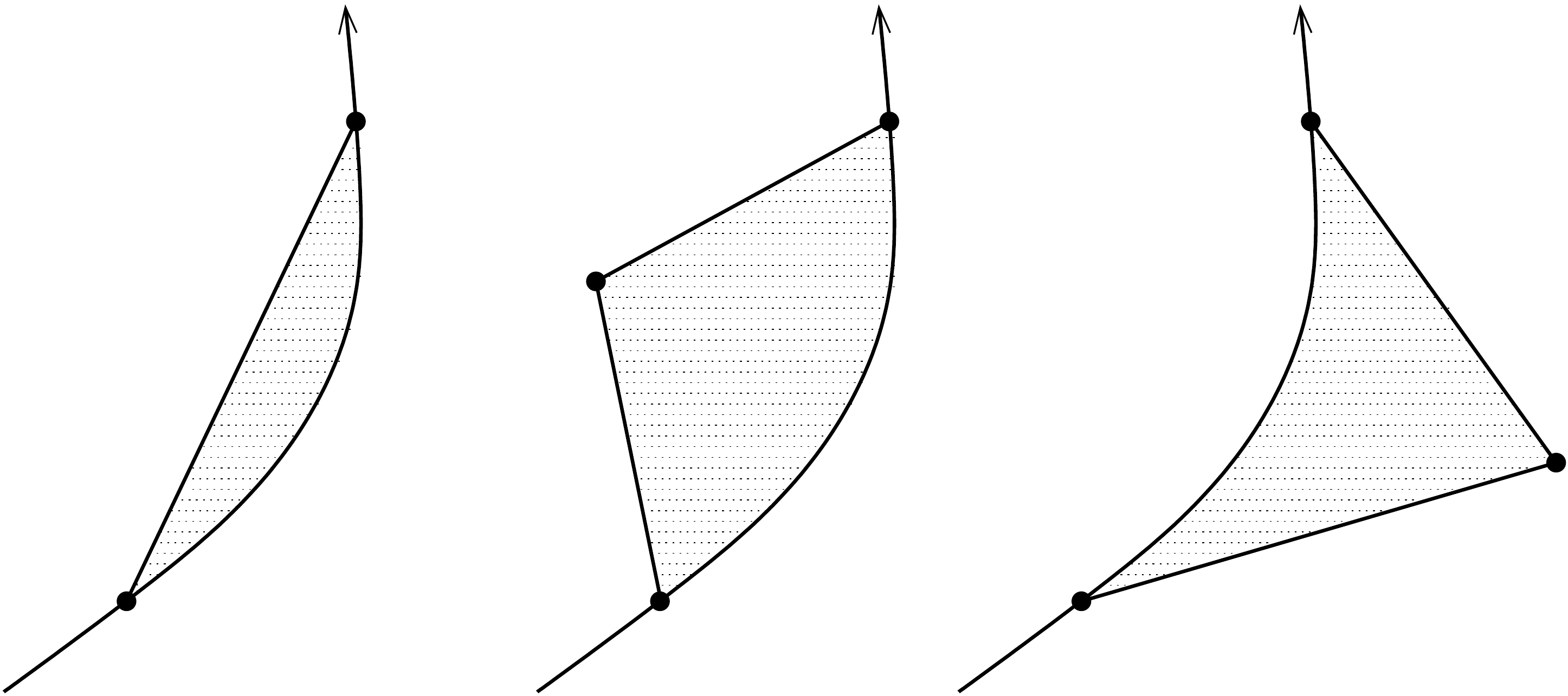}
		\put(6,2){$c(a)=p_0$}
		\put(40,2){$c(a)$}
		\put(67,2){$c(a)$}
		\put(24,36){$c(s)$}
		\put(58,36){$c(s)$}
		\put(85,36){$c(s)$}
		\put(99,17){$p_0$}
		\put(35,29){$p_0$}
		\put(13,-3){(i)}
		\put(42,-3){(ii)}
		\put(83,-3){(iii)}
	\end{overpic}
	\caption{The area $A_{c,a,p_0}(s)$ for some choices of the point $p_0$.
	In (i) and (ii) we have $A_{c,a,p_0}(s)>0$ and in (iii)  
	$A_{c,a,p_0}(s)<0$.}
	\label{fig:cones}
\end{figure}

By taking the first three derivatives of $A(s):=A_{c,a,p_0}(s)$ and
using $c'(s)\wedge c''(s)=1$ and $c'''(s) = -\ac(s)c(s)$
we find it 
satisfies a third order differential equation.
\begin{align}\label{A'}
	A'(s) &= \frac12 (c(s)-p_0) \wedge c'(s)\\ \label{A''}
A''(s) &= \frac12 c'(s)\wedge c'(s) + \frac12 (c(s)-p_0)\wedge c''(s)\\ \nonumber
&= \frac12 (c(s)-p_0)\wedge c''(s)\\ \label{A'''}
A'''(s) &= \frac12 c'(s)\wedge c''(s) + \frac12 (c(s)-p_0) \wedge c'''(s)\\ \nonumber
& = \frac12 + \frac12 (c(s)-p_0) \wedge (-\ac(s) c'(s))\\ \nonumber
&= \frac12 - \ac(s)A'(s).
\end{align}
This proves:

\begin{thm}\label{thm:A_ode}
For any point $p_0$ the area 
function $A(s)=A_{c,a,p_0}(s)$ satisfies the third order differential equation
$$
A''' + \ac A' = \frac12.
$$
with initial conditions 
$$
A(a)=0,\quad A'(a) = \frac{1}{2} (c(a)-p_0)\wedge c'(a), \quad A''(a) = \frac12 (c(a)-p_0)\wedge c''(a).
$$
\qed
\end{thm}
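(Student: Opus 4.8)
The plan is to read off the differential equation by differentiating the closed-form expression for $A(s)=A_{c,a,p_0}(s)$ three times and then substituting the two structural identities of the affine theory. I would begin from the integral representation in \eqref{A=int_f},
$$
A(s)=\frac12\int_a^s (c(\sigma)-p_0)\wedge c'(\sigma)\,d\sigma,
$$
so that the fundamental theorem of calculus gives $A'(s)=\frac12\,(c(s)-p_0)\wedge c'(s)$ at once.

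First I would differentiate a second time, treating the wedge $\wedge$ as a bilinear pairing so that the Leibniz rule applies. This produces a term $\frac12\,c'(s)\wedge c'(s)$, which vanishes by the antisymmetry of $\wedge$, leaving $A''(s)=\frac12\,(c(s)-p_0)\wedge c''(s)$. Differentiating once more yields
$$
A'''(s)=\frac12\,c'(s)\wedge c''(s)+\frac12\,(c(s)-p_0)\wedge c'''(s).
$$

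Next I would invoke the two defining relations. The normalization of affine arc length, $c'(s)\wedge c''(s)=1$, collapses the first term to $\frac12$, while the affine-curvature relation $c'''(s)=-\ac(s)\,c'(s)$ turns the second term into $-\ac(s)\cdot\frac12\,(c(s)-p_0)\wedge c'(s)=-\ac(s)A'(s)$. Combining these gives $A'''=\frac12-\ac A'$, which rearranges to the asserted equation $A'''+\ac A'=\frac12$.

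The initial conditions then come essentially for free: the integral defining $A$ is empty at $s=a$, so $A(a)=0$, while evaluating the formulas for $A'$ and $A''$ at $s=a$ produces the stated values. I do not expect a genuine obstacle here, since the entire argument is a threefold differentiation; the only places that demand care are the use of bilinearity of $\wedge$ when applying the product rule, the cancellation $c'\wedge c'=0$, and substituting the arc-length and curvature identities in the right places.
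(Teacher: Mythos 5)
Your proposal is correct and follows exactly the paper's own argument: differentiate the integral representation \eqref{A=int_f} three times, cancel $c'\wedge c'=0$ by antisymmetry, and substitute $c'\wedge c''=1$ and $c'''=-\ac\, c'$ to obtain $A'''+\ac A'=\tfrac12$, with the initial conditions read off by evaluating at $s=a$. There is nothing to add.
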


\subsection{Some notation and conventions.}
\label{sub:notation}

Some of our results are awkward to state for curves given with
a particular parameterization on an interval, that is as $c \cn I \to \R^2$.
So we will use $\C$ and subscripted variants to denote an embedded 
convex curve.  If $p\in \C$ then $\t_\C(p)$ and $\n_\C(p)$ will be 
the affine tangent and affine normal.  Explicitly if $c \cn I \to \C$ 
is a local affine unit speed local parameterization of $\C$ with $c(s_0)=p$,
then $\t_\C(p)=c'(s_0)$ and $\n_\C(s_0)=c''(s_0)$.    The affine curvature of 
$\C$ at $p$ is denoted $\ac(p)$.

\section{Comparison results for initial value problems.}
\label{sec:initial_valus_problems}

\begin{defn}\label{LKernel}
Let $I$ be an interval in $\R$ and let $\mathcal D$ be a linear differential
operator defined on $C^n(I)$ by 
$$
\mathcal D y = \frac{d^ny}{ds^n}  + \sum_{j=0}^{n-1} a_j(s) \frac{d^jy}{ds^j}
$$
where the functions $a_0 ,\ldots,  a_{n-1}$ are continuous on $I$. 
Then the \bi{Lagrange kernel} for $\mathcal D$ on this interval
is the function $K\cn I \times I \to \R$ defined by the initial 
value problem 
\begin{equation}\label{eq:K_eqn}
\frac{\partial^n K}{\partial s^n}(s;r) + 
\sum_{j=0}^{n-1} a_j(s) \frac{\partial^j K}{\partial s^j}(s;r) =0
\end{equation}
\begin{equation}\label{eq:K_initial}
\frac{\partial^j K}{\partial s^j} (r;r) =0 \quad \text{for} \quad 0\le j \le n-2,\qquad
\frac{\partial^{n-1} K}{\partial s^{n-1}} (r;r)=1.
\end{equation}
\end{defn}

Holding $r$ fixed the equation \eqref{eq:K_initial} is a 
linear ordinary differential equation for the function 
$s \mapsto K(s;r)$ so the existence and uniqueness of
$K$ follows from the existence and uniqueness theorem
for linear ordinary differential equations.  The continuity 
of $K(s;r)$ follows from results on the continuous dependence
of solutions of differential equations on initial conditions
and parameters 
(cf.\ \cite[Chapter~1]{CL}).

\begin{remark}
There does not seem to be a standard name for this kernel.  Some 
authors refer to it as a Green's function (e.g.\ \cite{Linear}), but as this term is 
usually reserved for boundary value problems rather than initial
value problems this seems a little misleading.  As it is used
to solve the Cauchy problem for the non-homogeneous  $\mathcal D y=f$ 
some authors (e.g.~\cite{Linear-ODE}) refer to it as the Cauchy kernel.
But as it is just the kernel one gets by applying Lagrange's method
of variation of parameters 
(cf.\ \cite[Pages 145--147]{Linear}) it seems likely that Lagrange was the
first to write down some form of it.
\end{remark}

\begin{thm}\label{thm:LK-solves}
Let $\mathcal D$ be as in Definition \ref{LKernel}, the function 
$f\cn I \to \R$ continuous, and $r\in I$.  Then, denoting the $j$-th
derivative of $y$ by $y^{(j)}$, 
the solution to the initial value problem
$$
\mathcal D y=f, \qquad y^{(j)}(r)=0 \quad\text{for}\quad j=0,1 ,\ldots, n-1
$$
is 
$$
y(s) = \int_r^s K(s;t)f(t)\,dt.
$$
\end{thm}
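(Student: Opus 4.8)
The plan is to verify directly that the proposed formula $y(s) = \int_r^s K(s;t)f(t)\,dt$ satisfies both the differential equation $\mathcal D y = f$ and the homogeneous initial conditions $y^{(j)}(r)=0$ for $0\le j \le n-1$. Since $\mathcal D$ is a linear operator with continuous coefficients, the existence-uniqueness theorem for linear ODEs guarantees the initial value problem has a unique solution, so it suffices to check that this candidate works; no separate existence argument is needed.

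\smallskip

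The main computational tool will be repeated differentiation of the integral under the Leibniz rule. First I would compute $y'(s)$. Differentiating $y(s)=\int_r^s K(s;t)f(t)\,dt$ with respect to the upper limit and the parameter $s$ gives
\begin{equation*}
y'(s) = K(s;s)f(s) + \int_r^s \frac{\partial K}{\partial s}(s;t)f(t)\,dt.
\end{equation*}
By the initial conditions \eqref{eq:K_initial}, when $n\ge 2$ we have $K(s;s)=0$, so the boundary term vanishes and $y'(s)=\int_r^s \frac{\partial K}{\partial s}(s;t)f(t)\,dt$. The key structural point is that each time we differentiate, the boundary term carries a factor $\frac{\partial^{j}K}{\partial s^{j}}(s;s)$, and by \eqref{eq:K_initial} these are zero for $0\le j\le n-2$ and equal to $1$ for $j=n-1$. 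Iterating this, for $0\le j\le n-1$ I obtain
\begin{equation*}
y^{(j)}(s) = \int_r^s \frac{\partial^{j} K}{\partial s^{j}}(s;t)f(t)\,dt,
\end{equation*}
and differentiating once more, the boundary term at the $n$-th derivative is $\frac{\partial^{n-1}K}{\partial s^{n-1}}(s;s)f(s)=f(s)$, so
\begin{equation*}
y^{(n)}(s) = f(s) + \int_r^s \frac{\partial^{n} K}{\partial s^{n}}(s;t)f(t)\,dt.
\end{equation*}

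\smallskip

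With these formulas in hand, I would assemble $\mathcal D y$. Substituting gives
\begin{equation*}
\mathcal D y(s) = f(s) + \int_r^s\!\left(\frac{\partial^{n} K}{\partial s^{n}}(s;t)+\sum_{j=0}^{n-1}a_j(s)\frac{\partial^{j} K}{\partial s^{j}}(s;t)\right)\!f(t)\,dt.
\end{equation*}
The integrand in parentheses is exactly the left-hand side of \eqref{eq:K_eqn} (with the fixed parameter $t$ playing the role of $r$), which vanishes identically, leaving $\mathcal D y = f$ as required. Finally, the initial conditions are immediate: for $0\le j\le n-1$ the expression for $y^{(j)}(s)$ is an integral from $r$ to $s$, which is zero at $s=r$.

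\smallskip

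I expect the only real obstacle to be bookkeeping: justifying the induction on $j$ cleanly and confirming that the boundary terms drop out at each stage precisely because of the graded initial conditions in \eqref{eq:K_initial}. The regularity needed to differentiate under the integral sign is supplied by the continuity of $K$ and its $s$-derivatives, which follows from the smoothness of solutions to the linear ODE defining $K$ and from continuous dependence on the parameter $t$; with $f$ merely continuous, all the integrands above are continuous in $(s,t)$, so the Leibniz rule applies throughout.
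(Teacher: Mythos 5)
Your proof is correct and follows essentially the same route as the paper's: repeated differentiation under the integral sign via Leibniz's rule, with the boundary terms vanishing for $j\le n-2$ by the graded initial conditions on $K$, the term $f(s)$ appearing at the $n$-th derivative, and the differential equation \eqref{eq:K_eqn} for $K$ then giving $\mathcal D y = f$. The only cosmetic difference is that you invoke uniqueness for linear ODEs explicitly to justify the word ``the solution,'' which the paper leaves implicit.
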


\begin{proof}
This is known (and seems to be somewhat of a folk theorem among some applied
mathematicians) and a proof of a somewhat 
stronger version can be found in \cite[Theorem 4-2, Page 149]{Linear}. 
We include a short proof for completeness.  From the fundamental theorem
of calculus and Leibniz's 
formula for differentiating the integral of a function depending on a parameter 
\begin{equation}\label{eq:Lib}
	\frac{\partial }{\partial s} \int_r^s \frac{\partial^j K}{\partial s^j} (s;t)f(t)\,dt
	= \frac{\partial^jK}{\partial s^j} (s;s)f(s) + \int_r^s \frac{\partial^{j+1} K}{\partial s^{j+1}} (s;t)f(t)\,dt
\end{equation}
holds for $j=0 ,\ldots, (n-1)$.  When $j\le n-2$ we have 
$({\partial^jK}/{\partial s^j}) (s;s)=0$ and this gives
$$
\frac{\partial }{\partial s} \int_r^s \frac{\partial^j K}{\partial s^j} (s;t)f(t)\,dt
= \int_r^s \frac{\partial^{j+1} K}{\partial s^{j+1}} (s;t)f(t)\,dt
$$
and therefore for $j= 0,1 ,\ldots, n-1$ 
$$
y^{(j)}(s) = \int_r^s \frac{\partial^j K}{\partial s^j} (s;t)f(t)\,dt.
$$
When $j=n-1$, using $ ({\partial^{n-1}K}/{\partial s^{n-1}}) (s;s)=1$)
and the differential equation for~$K$,
\begin{align*}
    y^{(n)}(s)&= f(s)+ \int_r^s \frac{\partial^n K}{\partial s^n}(s;t)f(t)\,dt\\
          &= f(s)+ \int_r^s\left( -\sum_{j=0}^{n-1} a_j(s) \frac{\partial^j K}{\partial s^j}(s;t)  \right)f(t)\,dt\\
&=f(s) - \sum_{j=0}^{n-1} a_j(s) \int_r^s \frac{\partial^j K}{\partial s^j}(s;t)f(t)\,dt\\
&= f(s) - \sum_{j=0}^{n-1} a_j(s) y^{(j)}(s) 
\end{align*}
and therefore $\mathcal D y=f$.  The initial conditions $y^{(j)}(r)=0$
for $j=0,1, ,\ldots, n-1$ follow from $\int_r^r=0$.
\end{proof}

\begin{defn}\label{K_is_pos}
	Let $I$ be an interval and $\mathcal D$ a linear differential
	operator as in Definition \ref{LKernel} and let $K$ be
	the Lagrange kernel of $\mathcal D$.  Then $K$
	is \bi{forward positive} on $I$ if and only if for all $r,s\in I$
	$$
	s>r \quad \text{implies}\quad K(s;r)\ge 0.
	$$
\end{defn}

\begin{remark}
If $K$ is forward positive in this sense, then
for a fixed $r$ the function given by $y(s):=K(s;r)$ is a
not identically zero solution to a homogeneous linear differential equation.
The zeros of a such a solution are isolated.  Thus for fixed $r$, the 
zero set of $s \mapsto K(s;r)$ is a discrete set.  Therefore
when $K$ is forward positive $s \mapsto K(s;r)$ is positive almost
everywhere on $I\cap [r,\infty)$.  This fact will be used several times. 
\end{remark}

\begin{remark}
For any linear differential operator $\mathcal D$ is in Definition
\ref{LKernel} defined on an interval $I$ and $r\in I$
the initial conditions \eqref{eq:K_initial} there is a 
$\delta>0$ so that $K(s;r)>0$ for $r<s<r+\delta$.  Using this, continuity,
and a compactness argument, it follows that for any $r\in I$ there is
an interval $I_0$ with $r\in I_0 \subseteq I$ so that the restriction
$K\big|_{I_0\times I_0}$ is forward positive on $I_0$.    
\end{remark}

The definition of a  Lagrange kernel being forward positive is motivated  
by the following comparison result.  (This result can be be realized 
to a larger class of differential operators, but the statement
of the results are awkward to state and more generality is not
needed here.)

\begin{thm}\label{thm:comp-y'>0}
	Let $\ac, \ol\ac \cn [a,b]\to \R$ be continuous and $n$ and $\ell$
	integers with $0\le \ell < n$.  Let $y,\ol y\cn [a,b]\to \R$
	be $C^n$ functions that satisfy the $n$-order differential 
	equations 
	\begin{align}\label{eqn:y}
		y^{(n)} + \ac y^{(\ell)}=f \quad \text{and} \quad
		\ol y^{(n)} + \ol \ac\,  \ol y^{(\ell)}   =f
	\end{align}
	for some continuous function $f$ and have the same initial
	conditions at $a$:
	$y^{(j)}(a)=\ol y^{(j)}(a)$ for $j=0,1 ,\ldots, n-1$.	Assume
	\begin{enumerate}[(a)]
	\item 	The Lagrange kernel,  $\ol K$, of the differential
		operator $\ol y \mapsto \ol y^{(n)} + \ol\ac\,\ol y^{(\ell)}$ is forward positive 
		on $[a,b]$, and
	\item $y^{(\ell)}>0$ almost everywhere on $[a,b]$.  
	\end{enumerate}
	Then 
	\begin{align}\label{+3}
\ac \le \ol\ac \text{ on } [a,b]\quad \text{implies}\quad y\ge \ol y \text{ on } [a,b],\\
	\ac \ge \ol\ac \text{ on } [a,b]\quad \text{implies}\quad y\le \ol y \text{ on } [a,b].
	\label{+4}
	\end{align}
	Moreover in either of these cases if
	$y(b)=\ol y(b)$, then  $\ac(s)=\ol \ac(s)$
	and $y(s)=\ol y(s)$ for all $s\in [a,b]$.
\end{thm}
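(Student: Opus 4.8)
The plan is to set $w := y - \ol y$ and derive a differential equation for $w$ that isolates the curvature difference $\ac - \ol\ac$ as a forcing term, then apply the integral representation from Theorem \ref{thm:LK-solves}. Subtracting the two equations in \eqref{eqn:y} gives $y^{(n)} - \ol y^{(n)} + \ac y^{(\ell)} - \ol\ac\, \ol y^{(\ell)} = 0$. The trick is to add and subtract $\ol\ac\, y^{(\ell)}$ so that the left side becomes the operator $\ol{\mathcal D}w := w^{(n)} + \ol\ac\, w^{(\ell)}$ applied to $w$, while everything else moves to the right. Concretely,
\begin{align*}
w^{(n)} + \ol\ac\, w^{(\ell)} = -(\ac - \ol\ac)\, y^{(\ell)} =: g.
\end{align*}
Since $y$ and $\ol y$ share all initial data through order $n-1$ at $a$, the function $w$ satisfies $w^{(j)}(a)=0$ for $j=0,\ldots,n-1$. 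Thus Theorem \ref{thm:LK-solves}, applied to the operator $\ol{\mathcal D}$ whose Lagrange kernel is $\ol K$, yields the representation
\begin{align*}
w(s) = \int_a^s \ol K(s;t)\, g(t)\,dt = -\int_a^s \ol K(s;t)\, \bigl(\ac(t) - \ol\ac(t)\bigr)\, y^{(\ell)}(t)\,dt.
\end{align*}

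Next I would read off the sign of $w$ from this formula using the two hypotheses. By assumption (a), $\ol K(s;t)\ge 0$ whenever $s>t$, and by assumption (b), $y^{(\ell)}(t)>0$ for almost every $t$; hence the product $\ol K(s;t)\, y^{(\ell)}(t)$ is nonnegative a.e.\ on the relevant triangle $a\le t\le s$. If $\ac \le \ol\ac$, then the factor $-(\ac-\ol\ac)\ge 0$, so the entire integrand is $\ge 0$ and therefore $w(s)\ge 0$, i.e.\ $y\ge \ol y$, proving \eqref{+3}. The reverse inequality \eqref{+4} follows by the identical argument with the sign of the factor reversed.

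For the rigidity clause, suppose we are in the case $\ac\le\ol\ac$ (the other case is symmetric) and that $y(b)=\ol y(b)$, so $w(b)=0$. Then the nonnegative integrand must integrate to zero over $[a,b]$, forcing $\ol K(b;t)\,(\ol\ac(t)-\ac(t))\,y^{(\ell)}(t)=0$ for a.e.\ $t$. Here I would invoke the Remark following Definition \ref{K_is_pos}: since $t\mapsto \ol K(b;t)$ is (for fixed endpoint) a nontrivial solution of a homogeneous linear ODE in its first slot, it vanishes only on a discrete set, and likewise $y^{(\ell)}>0$ a.e.; so the two positive factors are nonzero a.e., which forces $\ac(t)=\ol\ac(t)$ for almost every $t$, and then by continuity for every $t\in[a,b]$. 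Once the curvatures agree identically, the difference $g\equiv 0$, so $w\equiv 0$ by the same integral representation (or by uniqueness of the zero initial value problem), giving $y=\ol y$ on $[a,b]$.

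The main obstacle I anticipate is the rigidity step rather than the inequalities themselves: the inequalities fall out immediately once the representation formula is in place, but concluding $\ac\equiv\ol\ac$ from the vanishing of a single integral requires care that the forward-positivity of $\ol K$ together with the genericity of the zeros of $t\mapsto\ol K(b;t)$ really does pin down the integrand a.e., and that continuity of $\ac-\ol\ac$ then upgrades "a.e." to "everywhere." A secondary point worth checking is that the kernel $\ol K$ used in the representation of $w$ is indeed the Lagrange kernel of $\ol{\mathcal D}$ as hypothesized in (a)—this is exactly the operator appearing in the restructured equation for $w$, so no mismatch arises, but it is the one place where one must confirm that the comparison operator, not the original $\mathcal D$ associated with $\ac$, governs the sign.
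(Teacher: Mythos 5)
Your decomposition, representation formula, and sign analysis are exactly the paper's proof: the paper likewise writes the difference as the solution of $(\ol y - y)^{(n)} + \ol\ac\,(\ol y - y)^{(\ell)} = -(\ol\ac - \ac)\,y^{(\ell)}$ with vanishing initial data at $a$, applies Theorem \ref{thm:LK-solves} with the kernel $\ol K$ of the comparison operator, and reads \eqref{+3} and \eqref{+4} off the sign of the integrand. That part of your write-up is complete and correct.

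The gap is in your justification of the equality case --- and you correctly flagged this as the delicate point, but the tool you cite does not apply. The Remark after Definition \ref{K_is_pos} concerns $s \mapsto \ol K(s;r)$ with the \emph{first} argument varying: for fixed $r$ that function solves the homogeneous equation $u^{(n)} + \ol\ac\, u^{(\ell)} = 0$, hence has isolated zeros. Your argument needs control of $t \mapsto \ol K(b;t)$ with the \emph{second} argument varying (the first is frozen at $b$); your phrase ``a nontrivial solution of a homogeneous linear ODE in its first slot'' has the two slots interchanged, and $t\mapsto \ol K(b;t)$ is \emph{not} a solution of that homogeneous equation (it satisfies an adjoint-type equation). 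What is true, and what suffices, is that $t \mapsto \ol K(b;t)$ cannot vanish identically on any subinterval: passing to the companion first-order system with fundamental matrix $\Phi$, one has $\ol K(b;t) = \bigl[\Phi(b)\Phi(t)^{-1}\bigr]_{1n}$, the last entry of the row vector $V(t) = e_1^{T}\Phi(b)\Phi(t)^{-1}$, which satisfies $V'(t) = -V(t)A(t)$ with $V(b) = e_1^{T} \neq 0$; if the last entry of $V$ vanished on an interval, the companion structure of $A$ would force, entry by entry, $V \equiv 0$ there, contradicting uniqueness for linear systems. With that fact your argument closes: on the open set where $\ol\ac > \ac$, a.e.-vanishing of the nonnegative integrand plus continuity of $\ol K$ and of the curvatures would force $\ol K(b;\cdot)$ to vanish on an interval, a contradiction; hence $\ac \equiv \ol\ac$, and then $y \equiv \ol y$ by uniqueness of the initial value problem. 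In fairness, the paper's own proof elides exactly the same point --- it asserts without proof that for each $s$ one has $\ol K(s;t)\,y^{(\ell)}(t) > 0$ for almost all $t$ in the range of integration, which is precisely the second-slot statement requiring the argument above --- so your proposal is no less complete than the published proof, but the Remark as you invoke it does not deliver the needed fact.
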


\begin{proof}
	Subtract the first equation in \eqref{eqn:y} form the second to
	and rearrange a bit to get
	$$
	(\ol y - y)^{(n)} + \ol\ac ( y - y)^{(\ell)} = -(\ol \ac - \ac) y^{(\ell)}
	$$
	As $y$ and $\ol y$ have the same initial conditions at $a$ the
	function 
	$\ol y - y$ and its first $(n-1)$ derivatives vanish at $a$. 
	Therefore Theorem \ref{thm:LK-solves} yields
	$$
	\ol y(s) - y(s) = - \int_a^s \ol K (s;t) (\ol \ac(t)-\ac(t))y^{(\ell)}(t)\,dt.
	$$
	By our assumptions for each $s$ the inequality $ K (s;t) y^{(\ell)}(t)>0$
	holds for almost all $t\in [s,b]$.
	Thus if $\ac \le \ol \ac$ on $[a,b]$ we have $y \ge \ol y$ on  $[a,b]$,
	and if $y(b) = \ol y(b)$, then $\ol \ac(t) = \ac(t)$ for $t\in [a,b]$
	which implies $y$ and $\ol y$ stratify the same initial value problem
	on $[a,b]$ and thus are equal on this interval.  A similar argument holds
	if $\ac \ge \ol \ac$ on $[a,b]$.
\end{proof}

To give the Lagrange kernel for the constant coefficient linear operators
related to our problem 
it is convenient to introduce some notation.  
Let $k$ be a real number and define
functions $\co_k$ and $\si_k$  by the initial value problems
\begin{align*}
\co''_k + k\co_k=0, \qquad \co_k'(0)=1, \quad \co_k'(0)=0\\
\si_k''+k\si_k=0,\qquad \si_k(0)=0, \quad \si_k'(0)=1
\end{align*}
or more explicitly 
$$
\co_k(s)= \begin{cases}
\cos(\sqrt k\, s),& k>0;\\
1,& k=0;\\
\cosh(\sqrt{|k|}\, s),& k<0.
\end{cases}
\quad \si_k(s) = \begin{cases}
	\dfrac{\sin(\sqrt {k}\, s)}{\sqrt k},& k>0;\\
	s,& k=0;\\
	\dfrac{\sinh(\sqrt{|k|}\,s)}{\sqrt{|k|}},&k<0.
\end{cases}
$$
These satisfy
\begin{align}
	\co_k'&=-k\si_k,  \qquad \si'=\co_k \label{trig_ders}\\
	\co_k^2 + k \si_k^2&=1 \label{trig_py}\\
	\co_k(a+s) &= \co_k(a) \co_k(s) - k \si_k(a) \si_k(s)\label{add_co}\\
	\si_k(a+s)&= \si_k(a)\co_k(s) + \co_k(a)\si_k(s)\label{add_si}.
\end{align}
Possibly the easiest way to see the derivative  formulas hold is to note $\co_k'$
and $-k\si_k$ are both solutions to the initial value problem
$y''+ky=0$, $y(0)=0$, $y'(0)=-k$, and $\si_k'$ and $\co_k$
are solutions to $y''+ky=0$, $y(0)=1$ and $y'(0)=0$. 
These imply  $\co_k^2 + k \si_k^2$
has zero as its derivative and therefore is constant, this implies
\eqref{trig_py} holds.  For the addition formula for $\co_k$,
note the left and right hand sides of \eqref{add_co} 
both satisfy the initial value problem $u''+ku=0$,
$u(0) = \co_k(a)$ and $u'(0) = \co_k'(a) = -k \si_k(a)$.
A similar argument shows the addition formula for $\si_k$ holds.

\begin{prop}\label{prop:cont-kernels}
Let $k\in \R$ and $I$ an interval. 
\begin{enumerate}[(a)]
\item The  Lagrange kernel for $y\mapsto y''+ ky$ on $I$ is
$$
P_k(s;r) = \si_k(s-r).
$$
When $k\le 0$, this is forward positive on all intervals $I$.
When $k>0$ this is forward positive on all intervals of length $L$ 
satisfying $k\le (\pi/L)^2$.
\item The Lagrange kernel for $y \mapsto y'''+ky'$ is 
$$
Q_k(s;r) = \begin{cases}
	\dfrac{1-\co_k(s-r)}{k}, & k\ne 0;\\[8pt]
\dfrac{(s-r)^2}{2},& k=0. 
\end{cases}
$$
and this is forward positive on all intervals for all $k$.
\end{enumerate}
\end{prop}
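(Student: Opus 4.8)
The plan is to verify each candidate kernel directly against the defining initial value problem of Definition~\ref{LKernel} and then invoke uniqueness of solutions to linear ordinary differential equations; once the identification is made, the forward positivity claims are read off from the explicit formulas. Both operators have constant coefficients, so I expect the kernel $K(s;r)$ to depend only on $\tau := s-r$. Accordingly, for each fixed $r$ I would propose the stated formula as a function of $\tau$, check that it solves the relevant homogeneous equation in $s$ with the prescribed data at $s=r$, and conclude equality with the Lagrange kernel by the uniqueness theorem.

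For part (a) the operator is $y\mapsto y''+ky$, so $n=2$ and the kernel must satisfy $\partial_s^2K+kK=0$ with $K(r;r)=0$ and $\partial_sK(r;r)=1$. Setting $u(\tau)=\si_k(\tau)$, the defining conditions for $\si_k$ give $u''+ku=0$, $u(0)=0$, $u'(0)=1$, which are exactly the required conditions, so $P_k(s;r)=\si_k(s-r)$. For part (b), with $y\mapsto y'''+ky'$, I would use the derivative relations \eqref{trig_ders} to compute, for $k\ne 0$, the first three $s$-derivatives of $(1-\co_k(\tau))/k$, obtaining in turn $\si_k(\tau)$, $\co_k(\tau)$, and $-k\si_k(\tau)$; this shows the candidate satisfies $\partial_s^3Q+k\,\partial_sQ=0$ together with $Q(r;r)=0$, $\partial_sQ(r;r)=0$, $\partial_s^2Q(r;r)=1$. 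The case $k=0$ is the operator $y'''$, whose kernel $(s-r)^2/2$ is immediate. Uniqueness then closes both identifications.

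The forward positivity claims reduce to inspecting the sign of the explicit expressions for $\tau=s-r>0$. In part (a), when $k\le 0$ the formulas $\si_0(\tau)=\tau$ and $\si_k(\tau)=\sinh(\sqrt{|k|}\,\tau)/\sqrt{|k|}$ are manifestly positive for $\tau>0$; when $k>0$ we have $\si_k(\tau)=\sin(\sqrt k\,\tau)/\sqrt k$, which is nonnegative precisely for $0<\sqrt k\,\tau\le \pi$, and on an interval of length $L$ the largest attainable value of $\tau$ is $L$, so nonnegativity throughout is equivalent to $\sqrt k\,L\le\pi$, i.e.\ $k\le(\pi/L)^2$. In part (b), writing $Q_k(s;r)=(1-\co_k(\tau))/k$: when $k>0$ we have $\co_k(\tau)\le 1$, so the numerator is $\ge 0$ and is divided by a positive $k$; when $k<0$ we have $\co_k(\tau)\ge 1$, so the numerator is $\le 0$ and is divided by a negative $k$; in either case the quotient is $\ge 0$, and for $k=0$ the value $(s-r)^2/2$ is positive. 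Hence $Q_k$ is forward positive for every $k$ and every interval.

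There is no serious obstacle here; the proposition is a chain of verifications. The only point demanding any care is matching the period of $\sin$ to the interval length in part (a): one must confirm that the first sign change of $\si_k$ for $k>0$ occurs exactly at $\tau=\pi/\sqrt k$ and that the boundary case $k=(\pi/L)^2$ still yields $\si_k\ge 0$ on the closed interval, where it merely vanishes at the right endpoint, which is permitted by the ``$\ge 0$'' in Definition~\ref{K_is_pos}. Everything else follows from the definitions of $\co_k,\si_k$ and the relations \eqref{trig_ders}.
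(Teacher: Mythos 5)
Your proposal is correct and takes essentially the same approach as the paper: the paper's proof is simply a direct calculation using \eqref{trig_ders} to verify that the stated functions satisfy the defining conditions of the Lagrange kernel, followed by a straightforward check of forward positivity. Your write-up fills in exactly those details (the derivative chain $\si_k,\co_k,-k\si_k$ for part (b), uniqueness, and the sign analysis including the borderline case $k=(\pi/L)^2$), so there is nothing to correct.
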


\begin{proof}
A direct calculation using \eqref{trig_ders} shows these functions satisfy 
the conditions defining the Lagrange kernel and it is straightforward
to check when they are forward positive.
\end{proof}

\begin{lemma}\label{lem:Sturm}
	Let $\ac\cn [a,b]\to \R$ be continuous and let
	$u\in C^2([a,b])$ satisfy 
	$u''+\ac u=0$.
	\begin{enumerate}[(a)]
	\item If $u(a) = 0$, $u'(a)\ne 0$, and $k_0\le (\pi/(b-a))^2$, then
	$u\ne 0$ on $(a,b)$.
\item If $u(a)\ne 0$, $u'(a) =0$, and $k_0\le (\pi/(2(b-a))^2$, 
	then $u\ne 0$ on $(a,b)$.  
	\end{enumerate}
\end{lemma}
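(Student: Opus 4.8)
The plan is to run a Sturm-type comparison of $u$ against the constant-coefficient solution of $v'' + k_0 v = 0$, where $k_0$ is the upper bound on $\ac$ (so that $\ac \le k_0$ on $[a,b]$), and to extract a contradiction from the monotonicity of the Wronskian $W = uv' - u'v$. The sole purpose of the numerical hypotheses on $k_0$ is to keep the chosen comparison solution $v$ strictly positive on the open interval, so that the sign of $W'$ is pinned down.

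For part (a) I would argue by contradiction. After replacing $u$ by $-u$ if necessary, assume $u'(a) > 0$, suppose $u$ vanishes somewhere in $(a,b)$, and let $c$ be its smallest such zero, so that $u > 0$ on $(a,c)$ and $u(c) = 0$. Uniqueness for $u'' + \ac u = 0$ rules out $u(c) = u'(c) = 0$, forcing $u'(c) < 0$. I then take $v(s) = \si_{k_0}(s-a)$, the solution with $v(a) = 0$, $v'(a) = 1$. The explicit formula for $\si_{k_0}$ shows its first positive zero sits at $a + \pi/\sqrt{k_0}$ when $k_0 > 0$, and that it has no positive zero when $k_0 \le 0$; in either case the hypothesis $k_0 \le (\pi/(b-a))^2$ gives $v > 0$ on $(a,b)$, hence on $(a,c]$.

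Differentiating and substituting both equations,
$$
W'(s) = u(s)v''(s) - u''(s)v(s) = (\ac(s)-k_0)\,u(s)v(s),
$$
so $W' \le 0$ on $(a,c)$ because $\ac \le k_0$ and $u,v > 0$ there. Since $W(a) = 0$, this forces $W(c) \le 0$. On the other hand $W(c) = u(c)v'(c) - u'(c)v(c) = -u'(c)v(c) > 0$, using $u'(c) < 0$ and $v(c) > 0$, a contradiction; hence $u$ has no zero in $(a,b)$. Part (b) is structurally identical, comparing instead with $v(s) = \co_{k_0}(s-a)$ (so $v(a) = 1$, $v'(a) = 0$, which again gives $W(a) = 0$); its first positive zero is at $a + \pi/(2\sqrt{k_0})$, and the hypothesis $k_0 \le (\pi/(2(b-a)))^2$ is exactly what keeps $v > 0$ on $(a,b)$.

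I expect the only delicate points to be the strict inequalities at the terminal zero $c$. The argument needs $v(c) > 0$ rather than merely $v(c) \ge 0$, which holds because an interior zero satisfies $c < b$ while $b$ does not exceed the first zero of $v$; and it needs $u'(c) \ne 0$, which is precisely where uniqueness of solutions enters. The borderline cases $k_0 = (\pi/(b-a))^2$ and $k_0 = (\pi/(2(b-a)))^2$ cause no trouble, since they only place a zero of $v$ at the excluded endpoint $b$.
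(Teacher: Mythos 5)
Your proof is correct, but it takes a genuinely different route from the paper's. The paper deliberately avoids the classical Sturm-type argument and instead derives the lemma from its own Theorem~\ref{thm:comp-y'>0} (with $n=2$, $\ell=0$): letting $b^*$ be the first zero of $u$ in $(a,b)$, so that $u>0$ on $(a,b^*)$, it compares $u$ with $\ol u(s)=\si_{k_0}(s-a)$ (resp.\ $\co_{k_0}(s-a)$ for part (b)); the Lagrange-kernel representation gives $u\ge\ol u$ on $[a,b^*]$, which contradicts $u(b^*)=0$ since $\ol u(b^*)>0$ under the stated bound on $k_0$. You instead run the classical Wronskian argument: $W=uv'-u'v$ vanishes at $a$ under either set of initial data, satisfies $W'=(\ac-k_0)uv\le 0$ up to the first zero $c$ of $u$, yet must equal $-u'(c)v(c)>0$ there. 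Both arguments hinge on the same two facts --- positivity of the constant-coefficient comparison solution on $(a,b)$ and positivity of $u$ up to its first zero --- and both handle (a) and (b) uniformly; indeed the paper remarks that the textbook Sturm comparison theorem covers only (a) directly, and your observation that $W(a)=0$ also holds for the Neumann-type data of (b) is exactly what restores that uniformity at the elementary level. What your route buys is self-containedness (nothing beyond uniqueness for linear ODEs is needed); what the paper's route buys is economy within its own framework, since the lemma becomes one more application of the kernel comparison machinery the paper has already built. One caveat applying equally to both proofs: the lemma as stated never relates $k_0$ to $\ac$, and you correctly supplied the intended hypothesis $\ac\le k_0$ on $[a,b]$.
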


\begin{proof}
While (a) can be proven using the Sturm Comparison Theorem,  \cite[Theorem 1.1 Page 208]{CL},
we give a short proof based on Theorem \ref{thm:comp-y'>0} which has the advantage
of working for (b) as well.
Without loss of generality we may assume $u'(a)=1$, in which case
$u(s)>0$ for $s>a$ and $s$ near $a$.  Towards a contradiction assume
$u$ has a zero in $(a,b)$ and let $b^*$ be the smallest 
zero of $u$ in $(a,b)$.  Then $u>0$ on $(a,b^*)$.  
Let $\ol u(s) := \si_{k_0}(s-a) $.  Then 
$$
\ol u'' + k_0 \ol u =0, \qquad \ol u(a) =0,\quad \ol u'(a)=1.
$$
By Proposition \ref{prop:cont-kernels} the Lagrange kernel
for $\ol u \mapsto \ol u'' + k_0 \ol u$ is $\ol K(s;r)= P_{k_0}(s;r)=\si_{k_0}(s-r)$
and this is forward positive on all intervals (when $k_0\le0$) or
on any interval whose length satisfies $k_0\le (\pi/L)^2$ (when $k_0>0$).
Therefore by Theorem \ref{thm:comp-y'>0} (with $n=2$ and $\ell=0$)
we have
$\ol u(s) = \si_{k_0}(s-a)\le u(s)$ on $(0,b^*)$.  As $u(b^*)=0$, this
implies $\si_{k_0}(b^*-a)\le 0$ so that $\si_{k_0}(s-a)=0$ would have
a solution in $(a,b^*]$, which is not the case.

For (b) the same idea works, use \ref{thm:comp-y'>0} 
to compare $u$ to $\ol u := \co_{k_0}$.
\end{proof}

\begin{thm}\label{thm:3pos_conds}
Let $I$ be an interval and $\ac \cn I \to \R$ continuous. 
Then the Lagrange kernel for $y\mapsto y''' + \ac y'$ is
forward positive in the following cases:
\begin{enumerate}[(a)]
\item $\ac$ is constant, 
\item $\ac \le 0$, or
\item $\ac \le k_1$ for a positive constant $k_1$ with $k_1\le (\pi/L)^2$ where
	$L$ is the length of $I$.
\end{enumerate}
\end{thm}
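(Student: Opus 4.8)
The plan is to reduce the third-order problem to the second-order Sturm theory already in place. Write $K$ for the Lagrange kernel of $y\mapsto y'''+\ac y'$, so that for fixed $r$ the function $s\mapsto K(s;r)$ solves $K'''+\ac K'=0$ (primes denoting $\partial/\partial s$) with $K(r;r)=0$, $\partial K/\partial s(r;r)=0$, and $\partial^2 K/\partial s^2(r;r)=1$. Set $P(s;r):=\partial K/\partial s(s;r)$. Differentiating the equation for $K$ gives $P''+\ac P=0$, and the initial data become $P(r;r)=0$ and $\partial P/\partial s(r;r)=1$; thus $P(\cdot;r)$ is exactly the Lagrange kernel of the second-order operator $y\mapsto y''+\ac y$. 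Since $K(r;r)=0$, integrating $\partial K/\partial s=P$ yields
$$
K(s;r)=\int_r^s P(\sigma;r)\,d\sigma ,
$$
so forward positivity of $K$ is equivalent to $\int_r^s P(\sigma;r)\,d\sigma\ge 0$ for all $r<s$ in $I$.

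In cases (b) and (c) I would show that the integrand is already positive. Fix $r<s$ in $I$ and regard $u:=P(\cdot;r)$ as the solution of $u''+\ac u=0$ with $u(r)=0$, $u'(r)=1$. In case (b) take $k_0=0$ and note $\ac\le 0=k_0$ with $k_0\le(\pi/(s-r))^2$; in case (c) take $k_0=k_1$ and note $\ac\le k_1=k_0$ with $k_0\le(\pi/L)^2\le(\pi/(s-r))^2$, since $s-r\le L$. Either way Lemma \ref{lem:Sturm}(a) applies on $[r,s]$ and gives $u\ne 0$ on $(r,s)$. Because $u(r)=0$ and $u'(r)=1>0$, the function $u$ is positive just to the right of $r$, and having no zeros it remains positive on all of $(r,s)$. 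Hence $K(s;r)=\int_r^s P(\sigma;r)\,d\sigma>0$, proving forward positivity in (b) and (c).

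Case (a) I would dispatch directly, not through the integral estimate: if $\ac$ equals the constant $k$, then Proposition \ref{prop:cont-kernels}(b) identifies the Lagrange kernel of $y\mapsto y'''+ky'$ as $Q_k(s;r)$ and states that $Q_k$ is forward positive for every real $k$, so there is nothing left to prove.

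The one point that must be handled with care is the gap between (a) and (c). For a large positive constant $k$ the companion kernel $u=P(\cdot;r)=\si_k(s-r)$ oscillates and changes sign, so the pointwise-positivity argument of the second paragraph collapses. What saves case (a) is that the antiderivative $\int_r^s \si_k(\sigma-r)\,d\sigma=(1-\co_k(s-r))/k=Q_k(s;r)$ is nonnegative irrespective of the size of $k$, a fact that is only visible from the explicit constant-coefficient formula. This is precisely why the hypothesis $k_1\le(\pi/L)^2$ is needed in (c) but can be dropped in (a), and recognizing that these two regimes require genuinely different arguments is the crux of the proof.
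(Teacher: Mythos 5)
Your proof is correct, and its central reduction coincides with the paper's: both pass to $P:=\partial K/\partial s(\,\cdot\,;r)$, note that it satisfies $P''+\ac P=0$ with $P(r;r)=0$ and $\partial P/\partial s(r;r)=1$, invoke Lemma \ref{lem:Sturm}(a) to get $P>0$ to the right of $r$ in cases (b) and (c), and dispose of case (a) via Proposition \ref{prop:cont-kernels}. Where you part ways is the finish: you simply integrate, $K(s;r)=\int_r^s P(\sigma;r)\,d\sigma>0$, whereas the paper feeds the positivity of $K'$ into the general comparison theorem (Theorem \ref{thm:comp-y'>0} with $n=3$, $\ell=1$), comparing $K(\,\cdot\,;r)$ against the constant-coefficient solution $\ol y$ of $\ol y'''+k_1\ol y'=0$ and then checking $\ol y\ge 0$ from the explicit formula. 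Your finish is more elementary and self-contained --- for (b) and (c) it needs neither case (a) nor Theorem \ref{thm:comp-y'>0} at all --- while the paper's detour buys a quantitative strengthening as a byproduct, namely the lower bound $K(s;r)\ge Q_{k_1}(s;r)$ rather than bare positivity. One wording nitpick: you say ``differentiating the equation for $K$ gives $P''+\ac P=0$''; no differentiation is needed (and literal differentiation would require $\ac$ to be differentiable and would produce a fourth-order equation) --- the equation $K'''+\ac K'=0$ \emph{is}, verbatim, the second-order equation for $P=K'$. Finally, your closing observation is a correct and worthwhile diagnosis of why (a) cannot be subsumed under the pointwise argument of (b)/(c): for a large positive constant $k$ the companion solution $\si_k$ changes sign, and only its antiderivative $(1-\co_k)/k$ remains nonnegative, which is visible only from the explicit constant-coefficient formula.
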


\begin{proof}
The case of $\ac$  being constant follows form Proposition \ref{prop:cont-kernels}.

The hypothesis of case (b) can be restated as $\ac \le k_1:=0$.
With this notation we have $\ac \le k_1$ in both cases (b) and (c).
Let let $y(s):= K(s;r)$, then
$(y')''+\ac (y')=0$, $y'(a)=0$, $(y')'(a)=1$. 
Applying Lemma \ref{lem:Sturm} to $u:=y'$ yields
$y'>0$ on $I \cap (r,\infty)$ in both cases (b) and
(c).  
Let $\ol K$ be the Lagrange kernel for the $\ol y\mapsto \ol y'''
+ k_1 \ol y'$.  Then $\ol K$ is forward positive by case (a).  
Theorem \ref{thm:comp-y'>0} (with $n=3$ and $\ell=1$) implies $y\ge \ol y$ 
on $I \cap (r,\infty)$.  When $k_1=0$ we have $\ol y(s) = (s-r)^2/2>0$
and when $k_1>0$ we have $\ol y(s) = (1-\co_{k_1}(s-r))/k_1\ge 0$. 
As $y(s)=K(s;r)$ and $r$ was any element of $I$ we are done.
\end{proof}

\begin{lemma}\label{lem:xbar,ybar}
Let $k\in \R$, then the solutions to the initial value problems
\begin{align*}
\ol x_k''' + k \ol x_k'=0,\qquad \ol x_k(0) = 0, \quad\ol x_k'(0)=1, \quad \ol x_k''(0)=0\\
\ol y_k''' + k \ol y_k'=0,\qquad \ol y_k(0) = 0, \quad\ol y_k'(0)=0, \quad \ol y_k''(0)=1
\end{align*}
are
\begin{align*}
\ol x_k(s) &= \si_k(s)\\
\ol y_k(s) &= \begin{cases}
	\dfrac{1-\co_k(s)}{k} ,&k\ne0\\[10pt]
	\dfrac{s^2}{2} ,&k=0.
\end{cases}
\end{align*}
The curve $\ol c_k(s):= (\ol x_k(s), \ol y_k(s))$ has unit affine speed and
constant affine curvature $k$.  This parameterizes the connected component
containing $(0,0)$ of the conic with equation
$$
x^2 + k y^2 - 2y =0.
$$
\end{lemma}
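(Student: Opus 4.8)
The plan is to verify the two explicit formulas by direct substitution and then read off the three geometric assertions from the derivatives of $\ol c_k = (\ol x_k, \ol y_k)$. First I would confirm the solution formulas using the derivative relations \eqref{trig_ders}, namely $\si_k' = \co_k$ and $\co_k' = -k\si_k$. For $\ol x_k$, these give $\si_k''' + k\si_k' = 0$ together with $\si_k(0) = 0$, $\si_k'(0) = \co_k(0) = 1$, and $\si_k''(0) = \co_k'(0) = 0$, so $\ol x_k = \si_k$ by uniqueness of solutions to the initial value problem. For $\ol y_k$ with $k \ne 0$, differentiating the proposed formula yields $\ol y_k' = \si_k$, $\ol y_k'' = \co_k$, and $\ol y_k''' = -k\si_k$, which verifies $\ol y_k''' + k\ol y_k' = 0$ and the initial conditions $\ol y_k(0) = \ol y_k'(0) = 0$, $\ol y_k''(0) = 1$; the case $k = 0$ is immediate from $\ol y_0' = s$, $\ol y_0'' = 1$, $\ol y_0''' = 0$.

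Next, the affine speed and curvature follow from the same derivatives. Since $\ol c_k' = (\co_k, \si_k)$ and $\ol c_k'' = (-k\si_k, \co_k)$, we get $\ol c_k' \wedge \ol c_k'' = \co_k^2 + k\si_k^2 = 1$ by the identity \eqref{trig_py}, so $\ol c_k$ has unit affine speed. Because each coordinate of $\ol c_k$ solves $u''' + ku' = 0$, the third derivative is $\ol c_k''' = (-k\co_k, -k\si_k) = -k\,\ol c_k'$, so the affine curvature is the constant $k$.

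For the conic I would substitute $\ol x_k$ and $\ol y_k$ into $x^2 + ky^2 - 2y$. When $k \ne 0$, the expression $\si_k^2 + k\!\left(\frac{1-\co_k}{k}\right)^2 - \frac{2(1-\co_k)}{k}$ collapses to $\si_k^2 + (\co_k^2 - 1)/k$, which vanishes upon substituting $\co_k^2 - 1 = -k\si_k^2$ from \eqref{trig_py}; the case $k = 0$ is the identity $s^2 - 2(s^2/2) = 0$. This shows the image lies on the conic and passes through the origin at $s = 0$.

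The one step beyond routine verification — and the point I expect to require the most care — is showing that the image is \emph{exactly} the connected component through the origin, not merely a subset of it. Here I would complete the square to identify the conic: for $k > 0$ it is the ellipse $x^2 + k(y - 1/k)^2 = 1/k$, for $k = 0$ the parabola $x^2 = 2y$, and for $k < 0$ the hyperbola $|k|(y + 1/|k|)^2 - x^2 = 1/|k|$. In the first two cases the conic is itself connected, and the parameterization (periodic for $k > 0$, proper for $k = 0$) sweeps it out entirely. In the hyperbolic case $\ol x_k(s) = \sinh(\sqrt{|k|}\,s)/\sqrt{|k|}$ is an increasing bijection of $\R$ onto $\R$, so $s \mapsto \ol c_k(s)$ traces the single branch on which the first coordinate attains every real value, which is precisely the branch containing $(0,0)$. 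Assembling the three cases completes the identification.
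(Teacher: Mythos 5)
Your proof is correct and follows essentially the same route as the paper, whose entire proof reads ``Straightforward calculations using equations \eqref{trig_ders} and \eqref{trig_py}'' --- precisely the computations you carry out. Your added care in identifying the image with the connected component of the conic (completing the square, and noting that the image is a connected subset of the hyperbola containing the origin whose first coordinate sweeps all of $\R$) fills in a detail the paper leaves implicit, and that part of your argument is also sound.
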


\begin{proof}
Straightforward calculations using equations \eqref{trig_ders} and \eqref{trig_py}.
\end{proof}

\begin{prop}\label{prop:x_bd}
	Let $\ac$ be continuous on $[a,b]$ and let $x\in C^3([a,b])$
	satisfy 
	$$
	x''' + \ac x'=0,\qquad x(a)=0, \quad x'(a) =1, \quad x''(a)=0.
	$$
	Assume for some constants $k_0$ and $k_1$ that $k_0\le \ac \le k_1$ 
	and $k_1\le (\pi/(2(b-a)))^2$.  Then, with the notation of Lemma \ref{lem:xbar,ybar},
	the inequalities 
	$$
	\ol x_{k_1}(b-a) \le x(b) \le \ol x_{k_0}(b-a)
	$$
	hold.
	If equality holds in the lower bound (respectively in the upper bound), 
	then $\ac(s) = k_1$ and $x(s) = \ol x_{k_1}(s-a)$ 
	(respectively $\ac = k_0$ and $x(s) = \ol x_{k_0}(s-a)$) on
	$[a,b]$.
\end{prop}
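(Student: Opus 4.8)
The plan is to reduce the third-order problem for $x$ to a second-order Sturm-type statement about $x'$, and then to apply the comparison Theorem~\ref{thm:comp-y'>0} twice: once against the constant-curvature reference curve $\ol x_{k_0}$ and once against $\ol x_{k_1}$ from Lemma~\ref{lem:xbar,ybar}. Both references solve $\bar y''' + k\,\bar y' = 0$ with exactly the same initial data $(\bar y,\bar y',\bar y'')(a) = (0,1,0)$ as $x$, which is what makes the theorem applicable with $n=3$, $\ell=1$, and $f\equiv 0$.

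First I would set $u := x'$. The equation $x''' + \ac x' = 0$ reads $u'' + \ac u = 0$, with $u(a) = x'(a) = 1$ and $u'(a) = x''(a) = 0$. Since $\ac \le k_1 \le (\pi/(2(b-a)))^2$, Lemma~\ref{lem:Sturm}(b) applies, with the upper bound $k_1$ serving as the relevant comparison constant, and yields $u \ne 0$ on $(a,b)$. Because $u(a) = 1 > 0$ and $u$ is continuous, the intermediate value theorem then forces $u = x' > 0$ on $[a,b)$, hence $x' > 0$ almost everywhere on $[a,b]$; a possible single zero at the endpoint $b$ is harmless since Theorem~\ref{thm:comp-y'>0} only asks for almost-everywhere positivity. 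I expect this positivity of $x'$ to be the crux of the argument: it is precisely the step that consumes the hypothesis $k_1 \le (\pi/(2(b-a)))^2$, and without it $x'$ could change sign, invalidating hypothesis (b) of the comparison theorem.

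Next I would run the comparison in both directions. For the upper bound, take $\bar y(s) := \ol x_{k_0}(s-a)$, whose Lagrange kernel is forward positive by Theorem~\ref{thm:3pos_conds}(a) (constant curvature), so hypothesis (a) holds; together with $x' > 0$ a.e.\ from the previous paragraph, and $\ac \ge k_0$, the implication \eqref{+4} of Theorem~\ref{thm:comp-y'>0} gives $x \le \bar y$, hence $x(b) \le \ol x_{k_0}(b-a)$. For the lower bound, take $\bar y(s) := \ol x_{k_1}(s-a)$; its kernel is again forward positive by Theorem~\ref{thm:3pos_conds}(a), and since $\ac \le k_1$, the implication \eqref{+3} gives $x \ge \bar y$, hence $x(b) \ge \ol x_{k_1}(b-a)$.

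Finally, the equality statements follow from the rigidity clause of Theorem~\ref{thm:comp-y'>0}. If $x(b) = \ol x_{k_0}(b-a)$, then $y(b) = \bar y(b)$ in the upper comparison, so the theorem forces $\ac \equiv k_0$ and $x(s) = \ol x_{k_0}(s-a)$ on $[a,b]$; symmetrically, equality $x(b) = \ol x_{k_1}(b-a)$ in the lower comparison forces $\ac \equiv k_1$ and $x(s) = \ol x_{k_1}(s-a)$. The only bookkeeping to be careful about is the interpretation of Lemma~\ref{lem:Sturm}(b), namely that the \emph{upper} curvature bound $k_1$ (not $k_0$) is the constant that must satisfy the length condition in order to rule out zeros of $u = x'$.
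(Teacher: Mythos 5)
Your proposal is correct and follows essentially the same route as the paper's own proof: apply Lemma~\ref{lem:Sturm}(b) to $u=x'$ (with the upper bound $k_1$ as the comparison constant, exactly as you note) to get $x'>0$, then invoke Theorem~\ref{thm:comp-y'>0} with $n=3$, $\ell=1$ against the constant-curvature references $\ol x_{k_0}(\cdot-a)$ and $\ol x_{k_1}(\cdot-a)$, with the equality cases coming from the same theorem's rigidity clause. The only cosmetic difference is that you cite Theorem~\ref{thm:3pos_conds}(a) for forward positivity of the constant-coefficient kernels where Proposition~\ref{prop:cont-kernels}(b) gives it directly.
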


\begin{prop}\label{prop:y_bd}
	Let $\ac$ be continuous on $[a,b]$ and let $y\in C^3([a,b])$
	satisfy 
	$$
	y''' + \ac y'=0,\qquad y(a)=0, \quad y'(a) =0, \quad y''(a)=1.
	$$
	Assume for some constants $k_0$ and $k_1$ that $k_0\le \ac \le k_1$ 
	and $k_1\le (\pi/(b-a))^2$.  Then, with the notation of Lemma \ref{lem:xbar,ybar},
	the inequalities 
	$$
	\ol y_{k_1}(b-a) \le y(b) \le \ol y_{k_0}(b-a)
	$$
	hold.
	If equality holds in the lower bound (respectively in the upper bound), 
	then $\ac(s) = k_1$ and $y(s) = \ol y_{k_1}(s-a)$ 
	(respectively $\ac = k_0$ and $y(s) = \ol y_{k_0}(s-a)$) on
	$[a,b]$.
\end{prop}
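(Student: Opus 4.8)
The plan is to read the two inequalities as instances of the comparison theorem, Theorem \ref{thm:comp-y'>0}, taken with $n=3$, $\ell=1$, and $f\equiv 0$, comparing $y$ against the two constant-curvature solutions $\ol y_{k_0}(s-a)$ and $\ol y_{k_1}(s-a)$ from Lemma \ref{lem:xbar,ybar}. By that lemma each $\ol y_k$ satisfies $\ol y_k''' + k\ol y_k' = 0$ with $\ol y_k(0) = \ol y_k'(0) = 0$ and $\ol y_k''(0) = 1$, so the shifted functions $s\mapsto \ol y_k(s-a)$ carry exactly the same initial data at $a$ as $y$; this matches the standing requirement in Theorem \ref{thm:comp-y'>0} that the two solutions agree in their first $n-1$ derivatives at the left endpoint. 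The whole argument runs parallel to the (stated) Proposition \ref{prop:x_bd}, and differs only in which branch of Lemma \ref{lem:Sturm} supplies the positivity hypothesis.

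Before invoking the comparison theorem I must check its two hypotheses. Hypothesis (a), forward positivity of the comparison kernel, comes for free: the operator compared against is $\ol y\mapsto \ol y''' + k\ol y'$ for constant $k\in\{k_0,k_1\}$, whose Lagrange kernel is $Q_k$, and Proposition \ref{prop:cont-kernels}(b) states that $Q_k$ is forward positive on every interval for every $k$. Hypothesis (b) requires $y^{(\ell)} = y' > 0$ almost everywhere on $[a,b]$, and this is the single place where real work and the length restriction enter. Setting $u := y'$, the equation $y''' + \ac y' = 0$ together with the initial data gives $u'' + \ac u = 0$, $u(a) = 0$, $u'(a) = 1$. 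Since $\ac \le k_1 \le (\pi/(b-a))^2$, Lemma \ref{lem:Sturm}(a) applies and shows $u$ has no zero in $(a,b)$; as $u'(a) = 1 > 0$ forces $u > 0$ immediately to the right of $a$, continuity then gives $u = y' > 0$ throughout $(a,b)$, hence almost everywhere on $[a,b]$.

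With both hypotheses in hand the bounds follow at once. For the upper bound I take the comparison curvature to be the lower constant $k_0$: since $\ac \ge k_0$ on $[a,b]$, implication \eqref{+4} yields $y \le \ol y_{k_0}(\,\cdot - a)$, and evaluating at $b$ gives $y(b) \le \ol y_{k_0}(b-a)$. For the lower bound I instead compare against $k_1$: since $\ac \le k_1$, implication \eqref{+3} yields $y \ge \ol y_{k_1}(\,\cdot - a)$, so $y(b) \ge \ol y_{k_1}(b-a)$. The rigidity claim is precisely the \emph{moreover} clause of Theorem \ref{thm:comp-y'>0}: equality in the upper bound forces $\ac \equiv k_0$ and $y(s) = \ol y_{k_0}(s-a)$ on $[a,b]$, and equality in the lower bound forces $\ac \equiv k_1$ and $y(s) = \ol y_{k_1}(s-a)$.

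I expect the only delicate point to be the verification of hypothesis (b), and it is exactly here that the argument diverges from that for $x$ in Proposition \ref{prop:x_bd}. There $u = x'$ has $u(a)\ne 0$ and $u'(a) = 0$, so part (b) of Lemma \ref{lem:Sturm} governs the admissible length through $k_1 \le (\pi/(2(b-a)))^2$; here $u = y'$ vanishes at $a$ with nonzero derivative, so part (a) applies and the weaker restriction $k_1 \le (\pi/(b-a))^2$ suffices, which is precisely the hypothesis as stated. Everything remaining is a routine matching of initial conditions and an appeal to the already-established forward positivity of $Q_k$.
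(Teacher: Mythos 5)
Your proof is correct and is essentially the paper's own argument: the paper proves Proposition \ref{prop:x_bd} in detail and declares the proof of Proposition \ref{prop:y_bd} ``similar,'' and what you have written is exactly that similar proof spelled out --- Lemma \ref{lem:Sturm} applied to $u=y'$ to get hypothesis (b) of Theorem \ref{thm:comp-y'>0}, then the two comparisons against $\ol y_{k_0}(\,\cdot-a)$ and $\ol y_{k_1}(\,\cdot-a)$ with rigidity from the \emph{moreover} clause. You also correctly identify the one place where the two propositions genuinely differ, namely that part (a) of Lemma \ref{lem:Sturm} (rather than part (b)) supplies $y'>0$, which is why the weaker length restriction $k_1\le(\pi/(b-a))^2$ suffices here.
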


\begin{proof}[Proof of Propositions \ref{prop:x_bd} and \ref{prop:y_bd}] 
	We prove Proposition \ref{prop:x_bd}, the proof of Proposition
	\ref{prop:y_bd} being similar.  By Part (b) of Lemma \ref{lem:Sturm}
	applied to the function $u=x'$ we see that $x'>0$ on $(a,b)$.  
	Therefore the lower bound on $x$ follows form 
    Theorem \ref{thm:comp-y'>0}
	by comparing $x$ to $\ol x(s) = \ol x_{k_1}(s)$ and the upper
	bound by comparing to $\ol x(s) := \ol x_{k_0}(s)$.  Theorem
    \ref{thm:comp-y'>0} also covers the cases of equality.
\end{proof}

\section{Comparisons for areas}
\label{sec:Area_comp}

\begin{thm}\label{thm:sec-comp}
	Let $D$ be a convex open set in $\R^2$ (which need not be bounded) with
	$C^4$ boundary.  Let $p_0\in \f D$ and let $c\cn [0,L] \to \f D$
	have affine unit speed and $c(0)=p_0$ and let $\ac$ be the affine curvature
	of~$c$.  Let $\ol\ac\cn [0,L]\to \R$ be a continuous function
	so that the operator $\ol y \mapsto \ol y''' + \ol\ac\, \ol y'$
	has forward positive Lagrange kernel 
	and define $\ol A\cn [0,L]\to \R$ by the initial value problem
	$$
	\ol A''' + \ol \ac\ol A' = \frac12,\qquad \ol A(0)=\ol A'(0)=\ol A''(0)=0
	$$
	Then, with $A_{c,0,c(0)}$ as in \eqref{A=int_f},
	\begin{align*}
	\ac(s) \le \ol\ac(s) \text{ on } [0,L] \quad &
	\text{implies}\quad  A_{c,0,c(0)}(L)\ge \ol A(L) \\
	\ac(s) \ge \ol\ac(s) \text{ on } [0,L] \quad &
	\text{implies}\quad  A_{c,0,c(0)}(L)\le \ol A(L) 
	\end{align*}
	In either of these cases if equality holds, then $\ac\equiv \ol\ac$
	on $[0,L]$.
\end{thm}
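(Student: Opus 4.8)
The plan is to identify the area function $A := A_{c,0,c(0)}$ as a solution of the same type of initial value problem as $\ol A$ and then to invoke the comparison result Theorem \ref{thm:comp-y'>0} directly. First I would apply Theorem \ref{thm:A_ode} with the choice $p_0 = c(0)$. Since then $c(0) - p_0 = 0$, all three pieces of initial data there vanish, so $A(0) = A'(0) = A''(0) = 0$ and $A''' + \ac A' = \tfrac12$ on $[0,L]$. Thus $A$ and $\ol A$ solve $y''' + \ac y' = f$ and $\ol y''' + \ol\ac\, \ol y' = f$ for the common right-hand side $f \equiv \tfrac12$ and with identical vanishing initial conditions at $0$. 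This is exactly the setting of Theorem \ref{thm:comp-y'>0} with $n = 3$ and $\ell = 1$, provided its two hypotheses hold.

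Hypothesis (a), forward positivity of the Lagrange kernel of $\ol y \mapsto \ol y''' + \ol\ac\,\ol y'$, is assumed in the statement, so the substantive point is hypothesis (b): that $A' = y^{(\ell)} > 0$ almost everywhere. Here the convexity of $D$ enters. By \eqref{A'} we have $A'(s) = \tfrac12 (c(s) - c(0)) \wedge c'(s)$. The tangent line to $\f D$ at $c(s)$ is a support line of $D$, and since affine unit speed forces $c' \wedge c'' = 1 > 0$ and hence positive Euclidean curvature, $\f D$ is strictly convex, so this support line meets $\f D$ only at $c(s)$. Writing $c(0) - c(s) = \alpha c'(s) + \beta c''(s)$, the convention that $c''(s)$ points into $D$ places the boundary point $c(0)$ in the closed half-plane on the $c''(s)$ side, so $\beta \ge 0$; because $c''(s) \wedge c'(s) = -1$ one gets $(c(0) - c(s)) \wedge c'(s) = -\beta$ and therefore $A'(s) = \beta/2 \ge 0$, with equality only when $c(0)$ lies on the support line, i.e.\ $c(0) = c(s)$. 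As $c$ is injective this forces $s = 0$, so $A' > 0$ on $(0,L]$, in particular almost everywhere.

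With both hypotheses verified, Theorem \ref{thm:comp-y'>0} gives the conclusion at once: $\ac \le \ol\ac$ yields $A \ge \ol A$ on $[0,L]$, hence $A(L) \ge \ol A(L)$, while $\ac \ge \ol\ac$ yields the reverse inequality, and the equality clause of that theorem, read at $s = L$, shows $A(L) = \ol A(L)$ forces $\ac \equiv \ol\ac$ (and indeed $A \equiv \ol A$) on $[0,L]$. I expect essentially all the work to be in the positivity of $A'$, which is where the geometry of the convex curve is used; the rest is bookkeeping against Theorems \ref{thm:A_ode} and \ref{thm:comp-y'>0}. The two points to handle with care are the orientation convention guaranteeing that $c''$ points into $D$ (so that $A'$ comes out nonnegative rather than nonpositive) and the injectivity of $c$ (so that the support line at $c(s)$ does not also pass through $c(0)$ for $s > 0$); even without injectivity, the coincidence set $\{\,s : c(s) = c(0)\,\}$ is finite, so the almost-everywhere positivity needed for hypothesis (b) survives.
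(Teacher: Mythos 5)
Your proposal is correct and follows essentially the same route as the paper: apply Theorem \ref{thm:A_ode} with $p_0=c(0)$ to see $A=A_{c,0,c(0)}$ solves $A'''+\ac A'=\tfrac12$ with vanishing initial data, show $A'(s)=\tfrac12(c(s)-c(0))\wedge c'(s)>0$ off the coincidence set via convexity, and conclude by Theorem \ref{thm:comp-y'>0} with $n=3$, $\ell=1$. The only difference is that you spell out the support-line argument for the positivity of $A'$, which the paper disposes of by reference to Figure \ref{fig:pos}.
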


\begin{proof}
	To simplify notation let $A=A_{c,0,c(0)}$.  
	The by Theorem \ref{thm:A_ode} $A$ satisfies the differential 
	equation $A''' + \ac A = 1/2$ and $p_0=c(0)$ the formulas
	\eqref{A=int_f} and \eqref{A'} imply $A(0)=A'(0)=A''(0)=0$.
	From Equation \ref{A'}, $A'(s) = \frac12(c(s)-c(0))\wedge c'(s)$.
	This implies, see Figure \ref{fig:pos}, $A'(s)> 0$ all $s$ with
	$c(s)\ne p_0$.  Therefore the result follows from Theorem \ref{thm:comp-y'>0}.
\begin{figure}[hb]
\begin{overpic}[width=2.5in]{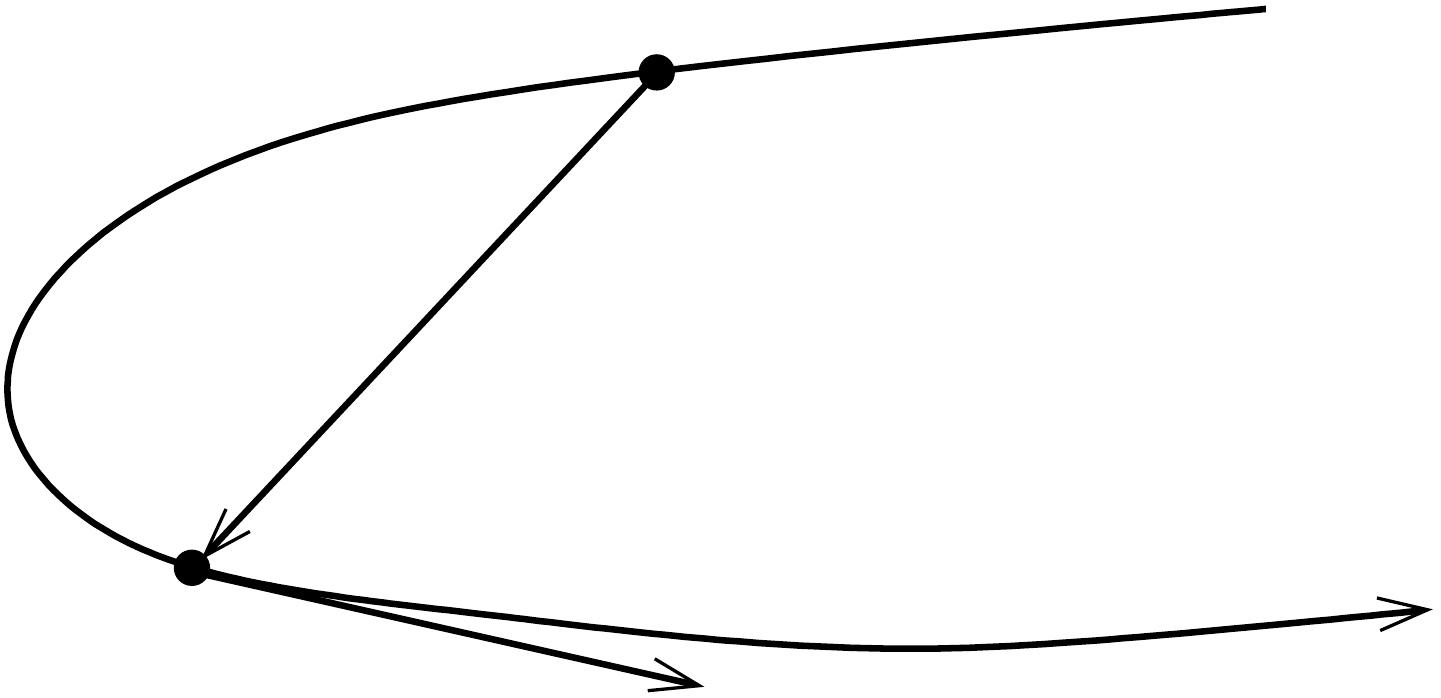}
\put(42,35){$c(0)=P$}
\put(8,3){$c(s)$}
\end{overpic}
\caption{The vectors $c(s)-c(0)$ and $c'(s)$ form a right handed (i.e.\ positive) basis
of $\R^2$.} 
\label{fig:pos}
\end{figure}
\end{proof}

Let $k\in \R$ and set
\begin{equation}  \label{eq:A_bar-def}
\ol A_k(s) = \begin{cases}
\dfrac{s-\si_k(s)}{2k} ,& k\ne0;\\
\dfrac{s^3}{12} ,& k=0.
\end{cases}
\end{equation}
Using the definition of $\si_k$ it is not hard to check
$$
\ol A_k'''(s) + k \ol A_k'(s) =\frac12\qquad \ol A_k(0)= \ol A_k'(0)=\ol A_k''(0) =0.
$$
With the notation of Theorem \ref{thm:sec-comp}, $A_k = A_{c,0,c(0)}$
where $c$ is a curve with constant affine curvature $k$.

\begin{cor}\label{cor:k0-k1-area-bds}
	With  notation as in  Theorem \ref{thm:sec-comp} and Equation
    \ref{eq:A_bar-def} 
        if $k_0\le \ac \le k_1$
	for some constants $k_0$ and $k_1$, then
 \begin{equation*}
	 \ol A_{k_1}(L)\le A_{c,0,c(0)}(s) \le \ol A_{k_0}(L)
 \end{equation*}
	on the interval $[0,L]$. If $A_{c,0,c(0)}(L)= \ol A_{k_0}(L)$ (respectively
	$A_{c,0,c(0)}(L)=\ol A_{k_1}(L)$) then $c$ has constant curvature
	$k_0$ (respectively $k_1$) on the interval $[0,L]$.\qed
\end{cor}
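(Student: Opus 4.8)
The plan is to obtain both inequalities as immediate consequences of Theorem \ref{thm:sec-comp}, applied twice with constant comparison curvatures. The essential observation is that for a \emph{constant} value $k$, the operator $\ol y \mapsto \ol y''' + k\ol y'$ is exactly the one whose Lagrange kernel is $Q_k$ from Proposition \ref{prop:cont-kernels}, and that kernel is forward positive on every interval, for every real $k$, with no restriction on the length. Thus a constant function $\ol\ac \equiv k$ always meets the forward-positivity hypothesis of Theorem \ref{thm:sec-comp}, so both comparison curves in the corollary are admissible targets for that theorem regardless of $L$.

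For the lower bound I would take $\ol\ac \equiv k_1$. By the displayed identity preceding the corollary, the function $\ol A_{k_1}$ from \eqref{eq:A_bar-def} solves $\ol A_{k_1}''' + k_1 \ol A_{k_1}' = 1/2$ with $\ol A_{k_1}(0) = \ol A_{k_1}'(0) = \ol A_{k_1}''(0) = 0$, so it is precisely the function $\ol A$ produced by Theorem \ref{thm:sec-comp} for this choice of $\ol\ac$. Since the hypothesis $\ac \le k_1$ is the inequality $\ac \le \ol\ac$, the theorem yields $A_{c,0,c(0)}(L) \ge \ol A_{k_1}(L)$, which is the left-hand inequality. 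For the upper bound I would symmetrically take $\ol\ac \equiv k_0$, so that $\ol A = \ol A_{k_0}$, and use $\ac \ge k_0$ together with the second implication of Theorem \ref{thm:sec-comp} to conclude $A_{c,0,c(0)}(L) \le \ol A_{k_0}(L)$.

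The equality cases transfer directly from the rigidity clause of Theorem \ref{thm:sec-comp}: if $A_{c,0,c(0)}(L) = \ol A_{k_0}(L)$ then equality holds in the comparison with $\ol\ac \equiv k_0$, forcing $\ac \equiv k_0$ on $[0,L]$, and likewise equality against $\ol A_{k_1}(L)$ forces $\ac \equiv k_1$.

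I do not anticipate a genuine obstacle here, since the corollary is a specialization of Theorem \ref{thm:sec-comp}; the only point that requires care is confirming the admissibility of the comparison curves, namely that the third-order constant-coefficient kernels $Q_{k_0}$ and $Q_{k_1}$ are forward positive with no length hypothesis. This is exactly the content of Proposition \ref{prop:cont-kernels}(b), and it is what distinguishes the present situation from the second-order kernels $P_k$, which for $k > 0$ are forward positive only on sufficiently short intervals. I would also flag the apparent typo in the displayed inequality — the middle term should read $A_{c,0,c(0)}(L)$ rather than $A_{c,0,c(0)}(s)$ — or, if a pointwise statement on all of $[0,L]$ is intended, I would run the same two comparisons on each subinterval $[0,s]$ and replace the right-hand arguments $L$ by $s$.
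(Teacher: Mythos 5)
Your proposal is correct and is exactly the argument the paper intends: the corollary is stated with a \qed because it follows immediately from Theorem \ref{thm:sec-comp} applied with the constant comparison curvatures $\ol\ac\equiv k_0$ and $\ol\ac\equiv k_1$, whose admissibility is guaranteed by the unrestricted forward positivity of the third-order constant-coefficient kernel (Proposition \ref{prop:cont-kernels}(b), equivalently Theorem \ref{thm:3pos_conds}(a)), together with the displayed identity showing $\ol A_k$ solves the comparison initial value problem. Your remark on the $s$ versus $L$ mismatch in the displayed inequality, and the fix of running the comparison on each subinterval $[0,s]$, is also the right reading.
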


\begin{prop}\label{prop:inside_tri}
Let $\C$ be convex and have an affine curvature bound $\ac \ge k_0$.  
Then any triangle $\triangle p_1p_2p_3$ with vertices on $\C$ satisfies
\begin{equation}\label{eq:tri<Abar}
\area(\triangle p_1p_2p_3) < \ol A_{k_0}(\aff(\C))
\end{equation}
where $\ol A_{k_0}$ as in  Equation \eqref{eq:A_bar-def} and $\aff(\C)$ is
the affine length of $\C$.
\end{prop}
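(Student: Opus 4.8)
The plan is to bound the triangle by the ``cap'' region cut off by a sub-arc of $\C$ and then invoke the area comparison of Theorem~\ref{thm:sec-comp} against a curve of constant affine curvature $k_0$. First I would label the three points so that $p_2$ lies between $p_1$ and $p_3$ in the affine orientation of $\C$, and let $c\cn[0,\ell]\to\C$ be the affine unit speed parameterization of the sub-arc running from $c(0)=p_1$ through $p_2$ to $c(\ell)=p_3$. Since this is a sub-arc of $\C$, its affine length satisfies $\ell\le\aff(\C)$. Taking the base point $p_0=p_1=c(0)$, formula \eqref{A=int_f} shows that $A_{c,0,p_1}(\ell)=\tfrac12\int_0^\ell (c(\sigma)-p_1)\wedge c'(\sigma)\,d\sigma$ is exactly the area of the region $R$ bounded by this sub-arc and the chord $\overline{p_1p_3}$; it is positive by the sign computation (Figure~\ref{fig:pos}) in the proof of Theorem~\ref{thm:sec-comp}.

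Next I would observe that $R$ is convex and contains the triangle: the three vertices $p_1,p_2,p_3$ all lie on $\partial R$, so convexity of $R$ gives $\triangle p_1p_2p_3\subseteq R$. Because $\C$ has a well-defined affine curvature it is strictly convex (its Euclidean curvature is positive), so the sub-arcs from $p_1$ to $p_2$ and from $p_2$ to $p_3$ bulge strictly outside the edges $\overline{p_1p_2}$ and $\overline{p_2p_3}$; hence $R\setminus\triangle p_1p_2p_3$ has positive area and $\area(\triangle p_1p_2p_3)<A_{c,0,p_1}(\ell)$.

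Then I would compare $A_{c,0,p_1}$ against the constant-curvature model. The sub-arc lies on the boundary of $D$, the interior of the convex hull of $\C$, an open convex set whose relevant boundary arc is $C^4$ because $\C$ has an affine curvature, so the hypotheses of Theorem~\ref{thm:sec-comp} on $c$ are met. Taking $\ol\ac\equiv k_0$, Proposition~\ref{prop:cont-kernels}(b) tells us the Lagrange kernel $Q_{k_0}$ of $\ol y\mapsto\ol y'''+k_0\ol y'$ is forward positive on every interval and for every $k_0$, so Theorem~\ref{thm:sec-comp} applies with no extra hypothesis on $k_0$. Since $\ac\ge k_0$ on the sub-arc, the theorem yields $A_{c,0,p_1}(\ell)\le\ol A_{k_0}(\ell)$, where $\ol A_{k_0}$ is the area function of \eqref{eq:A_bar-def}. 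Finally, $\ol A_{k_0}$ is nondecreasing: differentiating \eqref{eq:A_bar-def} and using $\si_k'=\co_k$ gives $\ol A_{k_0}'(s)=(1-\co_{k_0}(s))/(2k_0)\ge0$ for $k_0\ne0$ (and $\ol A_0'(s)=s^2/4\ge0$), so $\ol A_{k_0}(\ell)\le\ol A_{k_0}(\aff(\C))$ because $\ell\le\aff(\C)$. Chaining the three estimates gives
\begin{equation*}
\area(\triangle p_1p_2p_3)<A_{c,0,p_1}(\ell)\le\ol A_{k_0}(\ell)\le\ol A_{k_0}(\aff(\C)),
\end{equation*}
which is \eqref{eq:tri<Abar}.

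The step I expect to require the most care is the strict inequality $\area(\triangle p_1p_2p_3)<A_{c,0,p_1}(\ell)$: one must confirm that $R$ is genuinely convex, that the orientation is such that the signed area $A_{c,0,p_1}(\ell)$ is the ordinary positive area of $R$, and that strict convexity of $\C$ (a consequence of the affine curvature being defined) rules out the degenerate case where the arc coincides with the two edges. By contrast, the monotonicity of $\ol A_{k_0}$ and the bound $\ell\le\aff(\C)$ are routine, and the fact that no upper curvature bound is needed here is precisely what the unconditional forward positivity of $Q_{k_0}$ in Proposition~\ref{prop:cont-kernels}(b) supplies.
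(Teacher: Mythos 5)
Your proof is correct and takes essentially the same route as the paper: enclose the triangle in the convex cap cut off by an arc of $\C$ and its chord, then bound that cap's area by the constant-curvature comparison $\ol\ac\equiv k_0$ via Theorem~\ref{thm:sec-comp} (the paper does this through Corollary~\ref{cor:k0-k1-area-bds} applied to all of $\C$, as indicated in Figure~\ref{fig:triangle1}). The only cosmetic difference is that you use the sub-arc from $p_1$ to $p_3$ together with monotonicity of $\ol A_{k_0}$, while the paper uses the full curve directly; your write-up also spells out the strict containment and sign/positivity points that the paper leaves to the figure.
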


\begin{proof}
Let $k_1$ be any upper bound for the affine curvature of $c$.
Then the result follows from Corollary \ref{cor:k0-k1-area-bds}.
See Figure \ref{fig:triangle1}
\end{proof}

\begin{figure}[hb]
\begin{overpic}[width=2in]{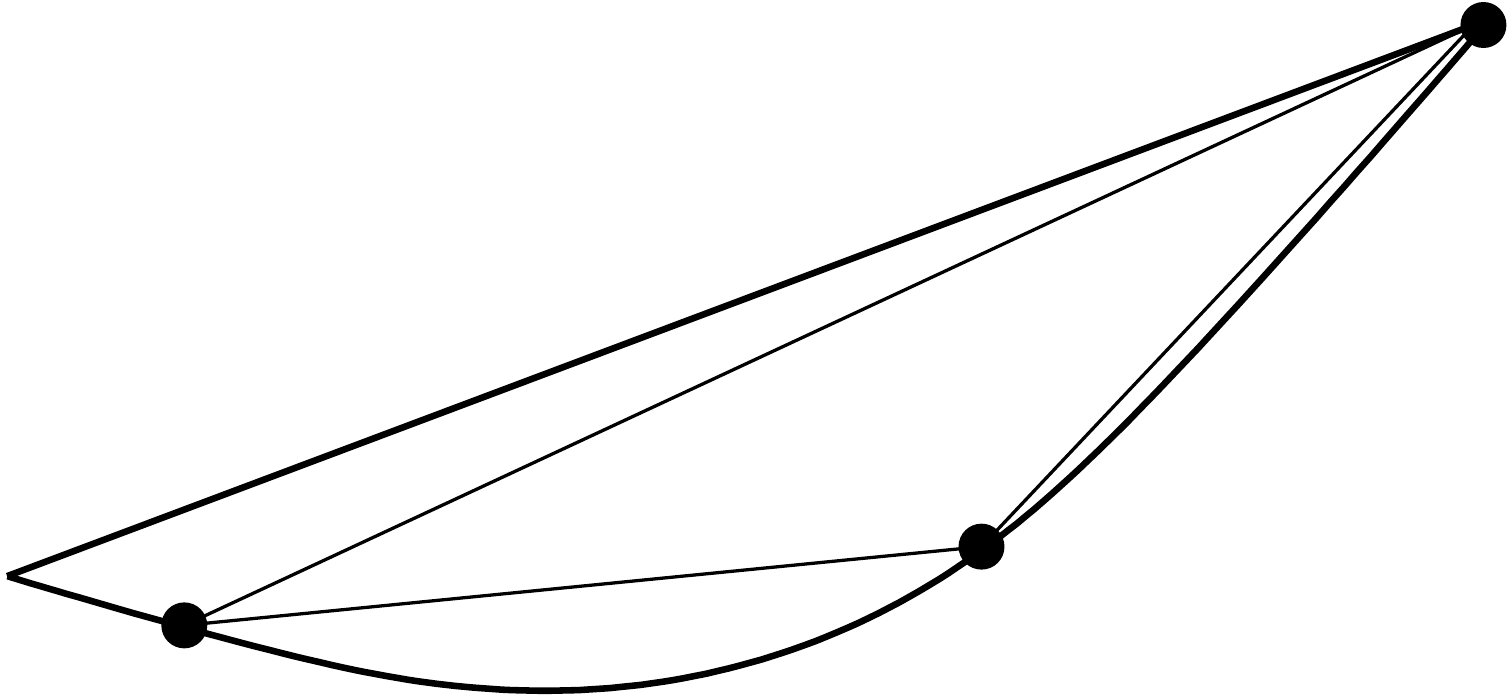}
\put(10,-2){$p_1$}
\put(63,4){$p_2$}
\put(96,36){$p_3$}
\end{overpic}
\caption{The area of $\triangle p_1p_2p_3$ is less than the area
of the region bounded by $\C$ and the segment
between the endpoints of $\C$ and this region has area at most
$\ol A_{k_0}(\aff(\C))$ by Corollary~\ref{cor:k0-k1-area-bds}.}
\label{fig:triangle1}
\end{figure}

\begin{remark}
	The estimate in Proposition \ref{prop:inside_tri} 
	is close to sharp in the sense that for a fixed 
	affine length $\aff(\C)$ as $k_0\to -\infty$
    the ratio of the two sides of inequality \ref{eq:tri<Abar}
	goes to $1$.  To see this let $L>0$ and $\C$ 
	be the curve parameterized by 
	$\ol c_{k_0}\cn [-L,L]\to \R^2$ be the curve of
	Lemma \ref{lem:xbar,ybar}.  Then, as $\ol c_{k_0}$
	has affine unit speed, $\aff(\C)=2L$.  Let
	$p_1$, $p_2$, and $p_3$ be the as in Figure~\ref{fig:low_example}.
	Then a calculation using that $\si_{k_0}(2L) = 2\co_{k_0}(L)\si_{k_0}(L)$
	(which follows from the addition formula \eqref{add_si})
\begin{align*}
	\frac{\area(\triangle p_1p_2p_2)}{\ol A_{k_0}(\aff(\C))}-1 &=
	 - \, \frac{\si_{k_0}(L)-L}{\si_{k_0}(L)\co_{k_0}(L)-L} \\
	&=  -\, \frac{\sinh(\sqrt{|k_0|} L)-\sqrt{|k_0|}L}{\sinh(\sqrt{|k_0|} L)\cosh(\sqrt{|k_0|} L)- \sqrt{|k_0|}L} \\
	&\sim  \frac{- e^{-\sqrt{|k_0|}L}}{2}  =\frac{- e^{-\sqrt{|k_0|}\aff(\C)/2}}{2} .
\end{align*}
\end{remark}

\begin{figure}[ht]
\begin{overpic}[width=2in]{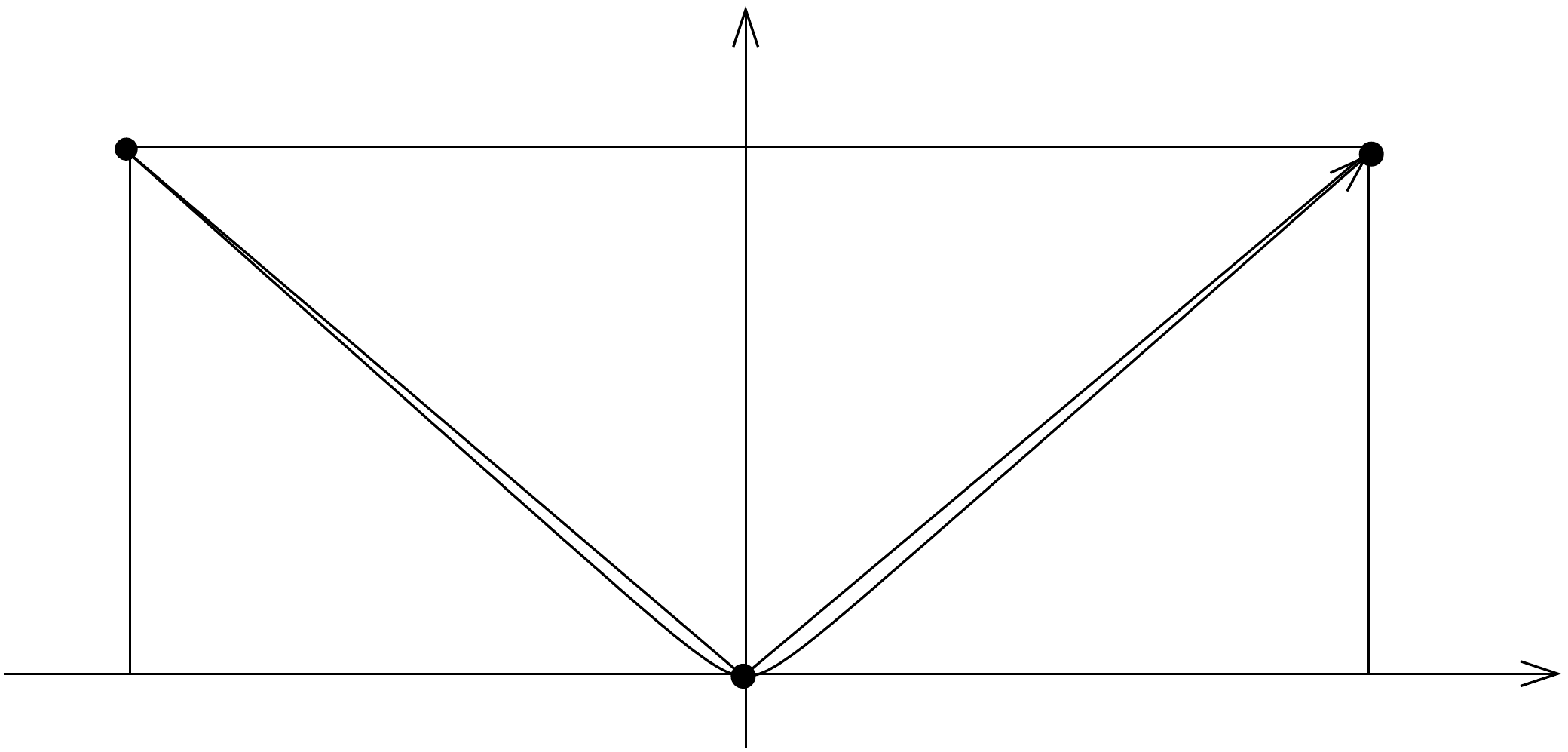}
\put(0,42){$p_1$}
\put(84,42){$p_3$}
\put(49,0){$p_2$}
\end{overpic}
\caption{Here $p_1=(\ol x_{k_0}(-L), \ol y_{k_0}(-L))$, $p_2=(0,0)$,
and $p_3 = (\ol x_{k_0}(L), \ol y_{k_0}(L))$.   Then
$\area(\triangle p_1p_2p_3)= \ol x_{k_0}(L) \ol y_{k_0}(L)$.  
The curve is the hyperbola with equation $x^2 + k_0y^2-2y=0$. }
\label{fig:low_example}
\end{figure}

\section{Adapted coordinates and geometric bounds.}
\label{sec:geo_bds}

\begin{defn}\label{def:adapted} Let $\C$ be a $C^3$ embedded convex curve and
	$p_0\in \C$.  Let $I_{\C,p_0} \subseteq \R$ be the unique interval so
	that there is an affine unit speed parameterization $c_{\C,p_0}\cn
	I_{\C,p_0}\to \C$ with $c_{\C,p_0}(0)=p_0$ that parameterizes all of
	$\C$.  This is the \bi{standard parameterization} of $\C$ at $p_0$.
	Then the \bi{affine adapted coordinates} to $\C$ at
	$p_0$ are the linear coordinates $\xi$, $\eta$ on $\R^n$ centered at $p_0$
	with
	$$
	\frac{\partial }{\partial \xi}\bigg|_{p_0}=\t_\C(p_0)= c_{\C,p_0}'(0),
	\qquad \frac{\partial }{\partial \eta}\bigg|_{p_0}=\n_\C(p_0)=c_{\C,p_0}''(0).
	$$
	The \bi{graphing parameter set} is 
	the maximal interval $I_{\C,p_0}^* \subseteq I_{\C,p_0}$ 
	so that the image of restriction
	$c_{\C,p_0}\big|_{I_{\C,p_0}^*} \cn I_{\C,p_0}^*\to \C$ is a graph in
	the adapted coordinates $\xi,\eta$.  The \bi{graphing interval}, $I_{\C,p_0}^{**}
	$, is the set $I_{\C,p_0}^{**}:=\{ \xi(c(s)): s\in I_{\C,p_0}^*\}$.
	See Figure \ref{fig:interval}.
\end{defn}

\begin{figure}[h]
\begin{overpic}[width=3in]{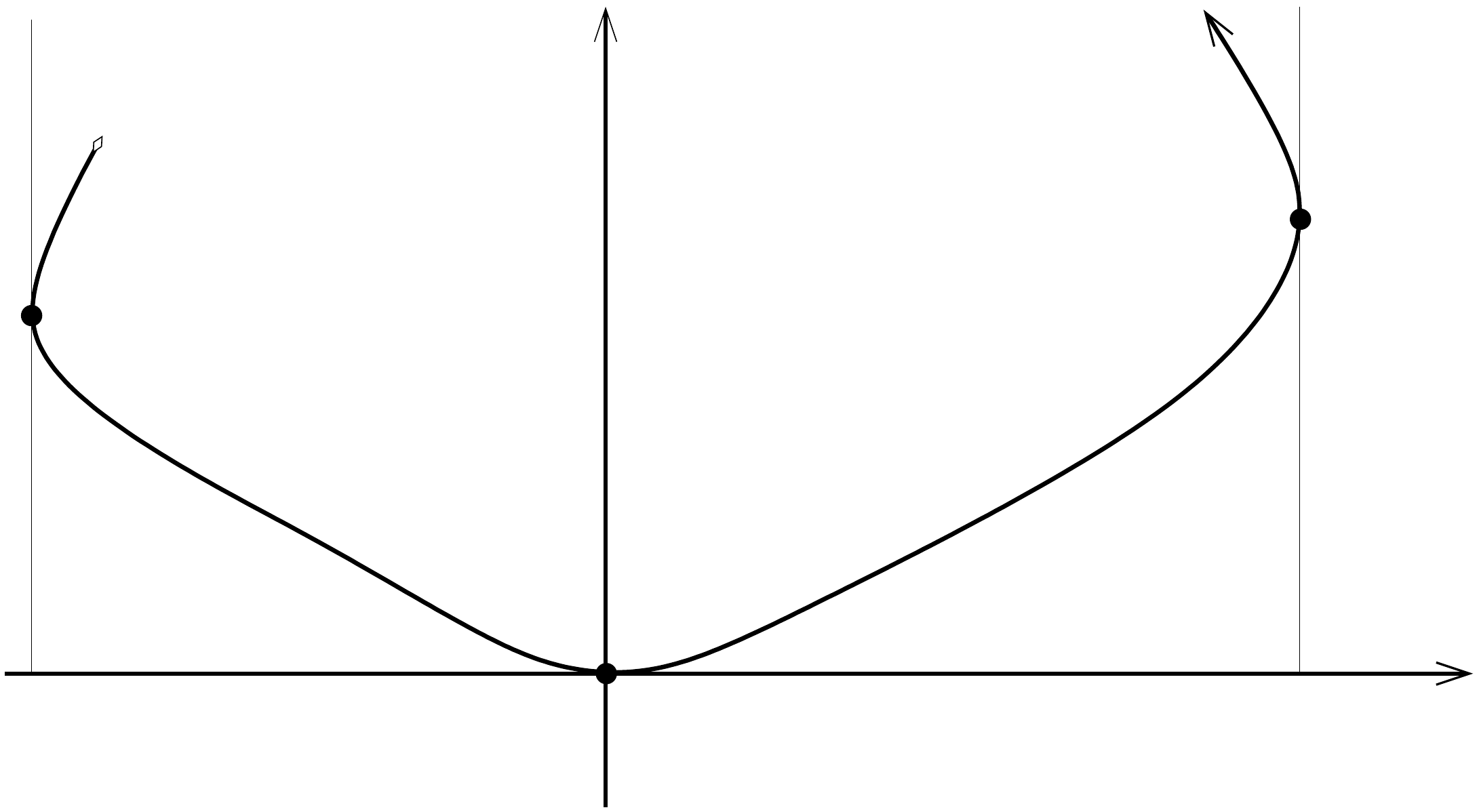}
	\put(5,32){$c(a)$}
	\put(42,4){$p_0=c(0)$}
	\put(76,39){$c(b)$}
	\put(95,13){$\xi$}
	\put(42,51){$\eta$}
	\put(-7,0){$\xi(c(a))$}
	\put(81,0){$\xi(c(b))$}
	\end{overpic}
	\caption{ 
		Here $c:=c_{\C,p_0}$ is the 
		\emph{standard parameterization} of $\C$ at $p_0$.
		The affine adapted coordinates centered at $p_0=c(0)$ 
	have the $\xi$-axis  tangent to $\C$ at $p_0$
	 and the $\eta$-axis is in the direction of the affine normal $c''(0)=\n_\C(0)$. 
	By local convexity the curve is locally the 
	graph of a convex function.  Let $[a,b]$ be the maximal subinterval
	of $I_{\C,p_0}$ with $0\in [a,b]$  so that the restriction $c\big|_{[a,b]}$ is 
	is a graph.  This is $I_{\C,p_0}^*$, the \emph{graphing parameter set}.  The
	interval $[\xi(c(a)),\eta(c(b))]$ on the $\xi$-axis which is the domain
	of the graphing function is the \emph{graphing interval} $I_{\C,p_0}^{**}$.
	Note that the endpoints of $I_{\C,p_0}^{**}$ are (when not an endpoint
	of $I_{\C,p_0}$) are the points where the tangent $\C$ become vertical,
    that is if $x(s)=\xi(c(s))$, the points where $x'(s)=0$.
	} 
	\label{fig:interval}
\end{figure}

\begin{lemma}\label{lem:adapted_xy}
Let $\xi,\eta$ be affine adapted coordinates to $\C$ at $p_0$ and let
$x,y\cn I_{\C,p_0} \to \R$ be the coordinates of $c$ in this coordinate system,
that is $x(s) := \xi(c(s))$ and $y(s):=\eta(c(s))$.  Then $x$ and $y$ satisfy
the initial value problems
\begin{align*}
x'''+\ac x &=0, & x(0)&=0& x'(0)&=1& x''(0)=0\\
y'''+\ac y &=0, & y(0)&=0& y'(0)&=0& y''(0)=1
\end{align*}
where $\ac$ is the affine curvature of $\C$.
\end{lemma}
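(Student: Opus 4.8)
The plan is to write the position vector $c(s)$ in the affine adapted frame and read off the component functions $x$ and $y$ directly. Since the adapted coordinates are the linear coordinates centered at $p_0 = c(0)$ whose basis vectors are $\t_\C(p_0) = c'(0)$ and $\n_\C(p_0) = c''(0)$, I would express
$$
c(s) - c(0) = x(s)\, c'(0) + y(s)\, c''(0),
$$
so that $x(s) = \xi(c(s))$ and $y(s) = \eta(c(s))$ are by definition the coefficients of $c(s)-c(0)$ with respect to the basis $\{c'(0), c''(0)\}$. The whole point is that all the differential structure of $c$ is already encoded in the governing ODE $c''' = -\ac\, c'$ from Section~\ref{sec:prelims}, and the coordinate functions inherit it componentwise.

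First I would establish the ODE. Differentiating $c(s) = c(0) + x(s) c'(0) + y(s) c''(0)$ three times in $s$ (the basis vectors are constant, so they pass through the derivatives) gives $c'(s) = x'(s) c'(0) + y'(s) c''(0)$, and likewise $c'''(s) = x'''(s) c'(0) + y'''(s) c''(0)$. On the other hand, $c'''(s) = -\ac(s) c'(s) = -\ac(s)\bigl(x'(s) c'(0) + y'(s) c''(0)\bigr)$. Since $c'(0)$ and $c''(0)$ are linearly independent (they satisfy $c'(0)\wedge c''(0) = 1$), I can equate coefficients to obtain
$$
x'''(s) = -\ac(s) x'(s), \qquad y'''(s) = -\ac(s) y'(s),
$$
which is exactly $x''' + \ac x' = 0$ and $y''' + \ac y' = 0$. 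Here I should note the statement as printed writes $x''' + \ac x = 0$, but the correct operator—consistent with $c'''=-\ac c'$ and with the area equation $A'''+\ac A' = \tfrac12$ of Theorem~\ref{thm:A_ode}—is $x''' + \ac x' = 0$; I would prove the latter.

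Next I would verify the six initial conditions, which is the routine but essential bookkeeping step. Evaluating the expansions at $s=0$: from $c(0) = c(0) + x(0) c'(0) + y(0) c''(0)$ we read $x(0) = y(0) = 0$; from $c'(0) = x'(0) c'(0) + y'(0) c''(0)$ and linear independence we get $x'(0) = 1$, $y'(0) = 0$; and from $c''(0) = x''(0) c'(0) + y''(0) c''(0)$ we get $x''(0) = 0$, $y''(0) = 1$. These match the two initial value problems in the statement exactly. I do not anticipate a genuine obstacle here, since the argument is just the componentwise transcription of a vector ODE into a fixed constant basis. The only subtle point is the careful justification that differentiation commutes with the fixed-basis expansion and that linear independence lets one equate coefficients; I would flag at the outset that $x$ and $y$ inherit the $C^3$ regularity of $c$ so that all the derivatives invoked are legitimate.
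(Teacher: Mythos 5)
Your proof is correct and is exactly the argument the paper intends: its proof is the one-line remark that the lemma ``follows easily from the equation $c'''+\ac c'=0$ and the definition of the adapted coordinates,'' which is precisely your componentwise expansion in the basis $c'(0)$, $c''(0)$. You are also right that the displayed equations contain a typo ($x'''+\ac x=0$ should be $x'''+\ac x'=0$), as confirmed by the paper's subsequent use of this lemma with Propositions \ref{prop:x_bd} and \ref{prop:y_bd}.
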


\begin{proof}
This follows easily  from  the equation $c'''+\ac c'=0$ and
the definition of the adapted coordinates.
\end{proof}

\begin{lemma}\label{lem:graphing}
	Let $p_0\in\C$.  With the notation 
	of Lemma \ref{lem:adapted_xy} the connected component of $0$
	in $\{s\in I_{\C,p_0} : x'(s)>0\}$ is contained in  the graphing
	parameter set, $I_{\C,p_0}^*$.
\end{lemma}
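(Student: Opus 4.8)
The plan is to identify the connected component $J$ of $0$ in the open set $U := \{s \in I_{\C,p_0} : x'(s) > 0\}$, to check that $c$ restricted to $J$ is already a graph in the adapted coordinates $\xi,\eta$, and then to invoke the maximality built into the definition of $I_{\C,p_0}^*$. First I would record that $0 \in J$: by Lemma \ref{lem:adapted_xy} we have $x'(0) = 1 > 0$, and since $x \in C^3$ the set $U$ is open in $I_{\C,p_0}$, so its connected component $J$ containing $0$ is an open subinterval of $I_{\C,p_0}$.

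Next I would show that $c|_J$ is a graph in the coordinates $\xi,\eta$. On $J$ we have $x' > 0$, so $x$ is strictly increasing and hence injective on $J$. Because $\C$ is embedded the map $c$ is injective, so injectivity of $x = \xi\circ c$ says precisely that each vertical line $\{\xi = \text{const}\}$ meets the image $c(J)$ in at most one point. Consequently $c(J)$ is the graph $\{(\xi,g(\xi)) : \xi \in x(J)\}$ of the single-valued function $g := y\circ (x|_J)^{-1}$ over the interval $x(J)$; that is, $c|_J$ is a graph in the adapted coordinates.

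Finally, since $I_{\C,p_0}^*$ is by definition the maximal subinterval of $I_{\C,p_0}$ containing $0$ whose image under $c$ is a graph, and $J$ is an interval containing $0$ with $c|_J$ a graph, maximality yields $J \subseteq I_{\C,p_0}^*$, which is the assertion.

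The argument is short, and the only place that needs care is the translation ``$x$ strictly monotone on $J$'' into ``$c(J)$ is a graph over the $\xi$-axis,'' together with the verification that $J$ genuinely competes in the maximality defining $I_{\C,p_0}^*$. If one prefers to read ``maximal'' as ``not properly contained in a larger graphing interval,'' I would close the gap by observing that $x$ is strictly increasing on each of $I_{\C,p_0}^*$ and $J$ (each is an interval through $0$ on which $x$ is injective and continuous with $x'(0)>0$), hence strictly increasing on their union $I_{\C,p_0}^* \cup J$, which is again an interval containing $0$; then $c$ restricted to this union is a graph, and maximality forces $I_{\C,p_0}^* \cup J = I_{\C,p_0}^*$, again giving $J \subseteq I_{\C,p_0}^*$.
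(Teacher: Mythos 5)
Your proof is correct and takes essentially the same route as the paper's: the paper likewise uses $x'>0$ to see that $x$ is a diffeomorphism of the component onto its image, exhibits the graphing function (written there as $f(x(s)) := \int_0^s y'(t)\,dt$, which is exactly your $g = y\circ (x|_J)^{-1}$), and then concludes by the maximality of $I_{\C,p_0}^*$. Your closing remark resolving the two possible readings of ``maximal'' is a point the paper leaves implicit, but it is not a difference in approach.
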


\begin{proof}
	Let $I_0 $ be connected component of $0$ 
	in $\{s \in I_{\C,p_0} : x'(s)>0\}$ and set $J:= \{ x(s): s\in I_0\}$.
	Then, as $x'(s)>0$ for $I_0$, the map $s\mapsto x(s)$ is
	a diffeomorphism between $I_0$ and $J$.  Define $f\cn J\to 
	\R$ by 
    \begin{equation}\label{eq:f_def}	
	f(x(s)):= \int_{0}^s y'(t)\,dt.
\end{equation}	
	Taking the derivative of this gives
	$ \frac{d}{ds} \left( f(x(s))- y(s)\right) =0 $.  Therefore
	$f(x(s))-y(s)=C $ for some constant $C$.  As $x(0)=y(0)=f(0)=0$
	we have $C=0$, which implies $y(s) = f(x(s))$
	for $s\in I_0$.  Therefore $I_0$ is contained in the graphing
	parameter set about $p_0$.
\end{proof}

In the definition of the affine adapted coordinates 
it is assumed the curve $\C$ is of differentiability class
$C^3$.   In general the standard  parameterization will
only be $C^2$.  If the standard parameterization
is $C^3$ which is what is required for the affine curvature
to be defined, there is a gain in the regularity of
$\C$: it will be a $C^4$ immersed submanifold of $\R^2$
as we now show.

\begin{prop}\label{prop:C4}
    With notation as in Lemma \ref{lem:graphing} if
    $\ac$ is $C^k$ for $k\ge0$, then the function $f$
    is $C^{k+4}$.  Thus the curve $\C$ is a $C^{k+4}$
    immersed submanifold of $\R^2$.   
\end{prop}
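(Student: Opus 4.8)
The plan is to combine an ODE bootstrap for the regularity of the coordinate functions $x$ and $y$ with the affine unit-speed relation; the latter supplies the one extra derivative that upgrades the naive regularity from $C^{k+3}$ to the claimed $C^{k+4}$.

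First I would show that $x$ and $y$ are $C^{k+3}$ on the connected component $I_0$. By Lemma \ref{lem:adapted_xy} they satisfy $x''' = -\ac x'$ and $y''' = -\ac y'$, and I argue by induction on $k$. When $k=0$, the existence theory for the first-order system associated to $x'''+\ac x'=0$ gives that $x,x',x''$ are $C^1$, whence $x''' = -\ac x'$ is continuous and $x\in C^3$. For the inductive step, if $\ac\in C^{k+1}$ then in particular $\ac\in C^k$, so by hypothesis $x\in C^{k+3}$ and thus $x'\in C^{k+2}$; then $x''' = -\ac x'$ is a product of a $C^{k+1}$ function and a $C^{k+2}$ function, hence $C^{k+1}$, which gives $x\in C^{k+4}$. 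The same reasoning applies to $y$.

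Next, since $x'>0$ on $I_0$, the map $s\mapsto x(s)$ is a $C^{k+3}$ diffeomorphism of $I_0$ onto $J$ whose inverse $x^{-1}$ is again $C^{k+3}$. Differentiating $f(x(s))=y(s)$ twice yields
\begin{equation*}
f'(x(s))\,x'(s)=y'(s),\qquad f''(x(s))\,x'(s)^2 + f'(x(s))\,x''(s)=y''(s),
\end{equation*}
so that $f''(x(s)) = \dfrac{x'(s)y''(s)-y'(s)x''(s)}{x'(s)^3}$. Here is the key point, and the main obstacle: a naive composition argument using only $x,y\in C^{k+3}$ makes the numerator $x'y''-y'x''$ lie in $C^{k+1}$ (it contains the second derivatives $x'',y''\in C^{k+1}$), which would cost a derivative and give only $f\in C^{k+3}$. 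The affine unit-speed condition removes this loss. Since $c$ has affine unit speed, $c'(s)\wedge c''(s)=1$, and because the adapted coordinates are obtained from the standard ones by a special affine motion, whose linear part has determinant $c'(0)\wedge c''(0)=1$, the wedge is the same in either system; thus the numerator equals $x'(s)y''(s)-y'(s)x''(s)=c'(s)\wedge c''(s)=1$ and
\begin{equation*}
f''(x(s)) = \frac{1}{x'(s)^3}.
\end{equation*}
As $x'\in C^{k+2}$ with $x'>0$, the function $u\mapsto 1/x'(u)^3$ is $C^{k+2}$; composing with the $C^{k+3}$ map $x^{-1}$ gives $f''\in C^{k+2}(J)$, and integrating twice yields $f\in C^{k+4}(J)$.

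Finally, for the submanifold statement I would observe that every $p_0\in\C$ has a neighborhood on which, in the linear adapted coordinates at $p_0$, the curve is the graph $\{(t,f(t)):t\in J\}$ of the $C^{k+4}$ function $f$, with nonvanishing tangent $(1,f'(t))$. Since adapted coordinates differ from the standard ones by a special affine motion, $\C$ is locally a $C^{k+4}$ graph near each of its points, and is therefore a $C^{k+4}$ immersed submanifold of $\R^2$.
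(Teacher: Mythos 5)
Your proof is correct, but it runs the bootstrap along a different axis than the paper's. The paper works non-parametrically: after obtaining $f\in C^4$ by differentiating $y(s)=f(x(s))$ three times and eliminating $x''',y'''$ via the curvature equations (which yields $f'''(x)=-3f''(x)x''(g(x))/x'(g(x))^2$ with $g=x^{-1}$), it invokes the graph formula for affine curvature to get a fourth-order \emph{nonlinear} ODE for $f$, of the form $f''''=2\ac(x) f''^{5/3}+\tfrac{10}{9}f'''^{2}/f''$, and then appeals to standard ODE regularity (or induction on derivatives of this equation) to conclude $f\in C^{k+4}$ when $\ac\in C^k$. You stay entirely parametric: induction on the linear equation $x'''=-\ac x'$ gives $x,y\in C^{k+3}$, and the wedge identity $x'y''-y'x''\equiv 1$ --- correctly justified by noting that the adapted coordinates differ from the standard ones by a special affine motion, so the unit-speed condition survives the coordinate change --- gives the closed form $f''(x(s))=1/x'(s)^3$, which hands you $f''\in C^{k+2}$ and hence $f\in C^{k+4}$ without ever touching a nonlinear equation. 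The two arguments are close cousins: your identity is the integrated form of the paper's relation for $f'''$ (equivalently, the classical fact that affine arc length on a graph satisfies $ds=(f'')^{1/3}\,dx$), and differentiating it recovers the paper's formula. What yours buys is a more elementary and self-contained argument, avoiding the nonlinear fourth-order ODE and the citation to Blaschke; what the paper's buys is the explicit intrinsic equation satisfied by the graphing function, with the affine curvature appearing as the driving coefficient, which is of independent interest.
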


\begin{proof}
    As $y(s)=f(x(s))$ and $x$ and $y$ are $C^3$ it
    follows $f$ is $C^3$.  
    Taking three derivatives of \eqref{eq:f_def} gives
\begin{align}\nonumber
    y'(s)&= f'(x(s))x'(s)\\
    y''(s)&= f''(x(s))x'(s)^2 + f'(x(s))x''(s)\nonumber\\
    y'''(s) &= f'''(x(s))x'(s)^3 +3 f''(x(s))x'(s)x''(s)
    + f'(x(s))x'''(s).\label{f'''} 
\end{align}
Using $x'''(s) = -\ac(s) x'(s)$, $y'''(s) =- \ac(s) y'(s)$ 
and $y'(s) = f'(x(s))x'(s)$ in \eqref{f'''} gives
$$
    0 = f'''(x(s))x'(s)^3 +3 f''(x(s))x'(s)x''(s)
$$
The function $x'$ does not vanish on the interior of
the graphing parametrization
set so we can divide by $x'(s)^3$ to get
$$
f'''(x(s))= - \frac{3f''(x(s))x''(s)} { x'(s)^2}.
$$
As $x$ is $C^3$, and $x'\ne0$ it has a $C^3$ inverse
$g$, that there is a $C^3$ function $g$ with $g(x(s))=s$.
Therefore
$$
f'''(x) = - \frac{3f''(x)x''(g(x))}{ x'(g(x))^2}
$$
which shows $f'''$ is $C^1$ and therefore $f$ is $C^4$.
For a non-parametric curve of class $C^4$ given as a graph $y=f(x)$ 
the affine curvature is (cf.\ \cite[Page 14, eqn.~(83)] {Blaschke}) 
$$
\ac(x) = -\frac12 \left( \frac1{(f''(x)^\frac23}\right)''= \frac{f''''(x)}{2f''(x)^\frac53  }                       
    - \frac{5f'''(x)^2}{9f''(x)^\frac83}               
$$
which can be rewritten in the form
$$
f''''(x) = 2 \kappa(x)f''(x)^\frac53 + \frac{10}{9} \frac{f'''(x)^2}{f''(x)}
$$
 By standard regularity
theorems for ordinary differential equations if $\ac$ is
$C^k$ then $f$ is $C^{k+4}$.  Or one can just take repeated derivatives
of this equation and use induction to get the result.
\end{proof}

\begin{thm}\label{thm:coord_bds}
Let $\C$ be $C^4$ and let $p_0\in \C$.
Assume $(-L,L) \subseteq I_{\C,p_0}$ for some $L>0$.
Also assume for some constants $k_0$ and $k_1$ 
the affine curvature of $\C$ satisfies the bounds
$k_0\le \ac(s)\le  k_1$ for $-L\le s\le L$ and $k_1\le (\pi/2L)^2$.  
Let $\xi ,\eta$ be affine adapted
coordinates at $p_0$ let $ x(s) = \xi(c(s))$ and $y(s) = \eta(c(s))$.
Then $(-L,L)$ is contained in the graphing parameter set
of $\C$ about $p_0$.  With the notation of Lemma \ref{lem:xbar,ybar} the inequalities
\begin{align}\label{x-bounds}
    \ol x_{k_1}(|s|) & \le |x(s)| \le \ol x_{k_0}(|s|)\\
\ol y_{k_1}(s)& \le y(s) \le \ol y_{k_0}(s)
\label{y-bounds}
\end{align}
hold on the interval $(-L,L)$.
Letting $R := \si_{k_1}(L)$, 
the graphing interval of $\C$ at $s_0$ contains
$(-R,R)$. 
 If $s_1\in (-L,L)$
	with $s_1\ne 0$ and equality holds in either of the lower bounds
	of \eqref{x-bounds} or \eqref{y-bounds} then the restriction
	of $c$ to the interval of points between $0$ and $s_1$ is
	a curve of constant curvature $k_1$.  Likewise if  
	equality holds in either of the upper bounds 
	of \eqref{x-bounds} or \eqref{y-bounds} then the restriction
	of $c$ to the interval of points between $0$ and $s_1$ is
	a curve of constant curvature~$k_0$. 
\end{thm}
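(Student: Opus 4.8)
The plan is to reduce everything to the one-sided comparison results, Propositions \ref{prop:x_bd} and \ref{prop:y_bd}, which are stated on an interval $[0,b]$ with initial data at the left endpoint, and to cover the half-interval $(-L,0)$ by a reflection $s\mapsto -s$ in the parameter. First I would show $x'>0$ on $(-L,L)$. By Lemma \ref{lem:adapted_xy} the function $u:=x'$ solves $u''+\ac u=0$ with $u(0)=1$, $u'(0)=0$. Since $\ac\le k_1\le(\pi/(2L))^2$, Lemma \ref{lem:Sturm}(b) applied on $[0,L]$ gives $u\ne0$ on $(0,L)$, so $u>0$ there by continuity. On the left half the substituted function $\tilde u(t):=u(-t)$ solves the same type of equation with curvature $\ac(-t)\le k_1$ and the same normalized data, so Lemma \ref{lem:Sturm}(b) again yields $u>0$ on $(-L,0)$. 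Thus $(-L,L)$ is a connected subset of $\{x'>0\}$ containing $0$, and Lemma \ref{lem:graphing} places it inside the graphing parameter set $I_{\C,p_0}^*$.

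For the pointwise bounds, for each fixed $s\in(0,L)$ I would apply Proposition \ref{prop:x_bd} on $[0,s]$; as $s<L$ the hypothesis $k_1\le(\pi/(2s))^2$ follows from $k_1\le(\pi/(2L))^2$, giving $\ol x_{k_1}(s)\le x(s)\le\ol x_{k_0}(s)$, and since $x(0)=0$ with $x'>0$ we have $x(s)>0$, which is exactly \eqref{x-bounds} for $s>0$. Likewise Proposition \ref{prop:y_bd} on $[0,s]$ (whose weaker requirement $k_1\le(\pi/s)^2$ certainly holds) gives \eqref{y-bounds} for $s>0$. For $s\in(-L,0)$ I would set $\hat x(t):=-x(-t)$ and $\tilde y(t):=y(-t)$; a direct check shows these solve the standard initial value problems of Lemma \ref{lem:adapted_xy} with curvature $\ac(-t)\in[k_0,k_1]$, so the bounds just proved apply to them on $[0,|s|]$. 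Because $\ol x_k=\si_k$ is odd and $\ol y_k$ is even, transporting back through $x(s)=-\hat x(|s|)$ and $y(s)=\tilde y(|s|)$ recovers \eqref{x-bounds} and \eqref{y-bounds} on $(-L,0)$.

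For the graphing interval, $I_{\C,p_0}^{**}=\{x(s):s\in I_{\C,p_0}^*\}\supseteq x\big((-L,L)\big)$, and since $x$ is strictly increasing this image is the open interval between its one-sided limits at $\pm L$. The lower bound in \eqref{x-bounds} gives $x(s)\ge\si_{k_1}(s)$ for $s>0$ and $x(s)\le-\si_{k_1}(|s|)$ for $s<0$, so letting $s\to L^-$ and $s\to-L^+$ shows the image contains $(-R,R)$ with $R=\si_{k_1}(L)$. The rigidity assertions come directly from the equality clauses of Propositions \ref{prop:x_bd} and \ref{prop:y_bd}: equality in a lower bound at $s_1>0$ forces $\ac\equiv k_1$ and $x(s)=\ol x_{k_1}(s)$ (respectively $y(s)=\ol y_{k_1}(s)$) on $[0,s_1]$, with the upper bounds forcing $\ac\equiv k_0$; for $s_1<0$ the same conclusion on $[s_1,0]$ is obtained by carrying the equality back through $\hat x$ or $\tilde y$.

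The step I expect to require the most care is the reflection argument: one must verify that $\hat x$ and $\tilde y$ genuinely satisfy the normalized initial conditions ($\hat x'(0)=1$ needs the sign flip in $-x(-t)$, whereas $y$ needs none) and keep careful track of the opposite parities of $\si_k$ and $\ol y_k$, since a sign slip there would corrupt both the inequalities and the rigidity conclusions on the negative half-interval.
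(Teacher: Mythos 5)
Your proposal is correct and follows essentially the same route as the paper's proof: Lemma \ref{lem:Sturm} applied to $x'$ (with the reflection $s\mapsto -s$ for the left half) to get $x'>0$ and hence containment in the graphing parameter set via Lemma \ref{lem:graphing}, then Propositions \ref{prop:x_bd} and \ref{prop:y_bd} together with the reflected functions $-x(-s)$, $y(-s)$ and the odd/even parities of $\ol x_k$, $\ol y_k$ to obtain the two-sided bounds, the graphing-interval claim, and the rigidity statements. The extra care you flag about the sign conventions in the reflection is exactly the step the paper handles the same way.
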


\begin{proof}
	Applying Lemma~\ref{lem:Sturm} to the function $u=x'$ on $[0,L)$
	yields $x'>0$ on $[0,L)$.  Applying the same lemma
	to $u(s)=x'(-s)$ on $[0,L)$ (which satisfies $u''(s)+ \ac(-s)u(s)=0$)
	implies $x'\ne 0$ the interval
	$(-L,L)$.  Therefore $x'>0$ on $(-L,L)$, thus
	by Lemma \ref{lem:graphing} the interval $(-L,L)$ is contained
	in the graphing parameter set of $\C$ at $p_0$.   On the interval
	$[0,L)$ the inequalities \ref{x-bounds} and \ref{y-bounds}
	on the interval $[0,L)$ follow from Propositions \ref{prop:x_bd}
	and \ref{prop:y_bd}.  

    To get the inequalities for $s\in (-L,0]$ let $\widetilde x,
    \widetilde y \cn [0,-L)\to \R$ be given by
    $$
        \widetilde x(s) = -x(-s), \qquad \widetilde y(s) = y(-s).
    $$
These satisfy 
\begin{align*}
    \widetilde x'''(s) + \ac(-s) \widetilde x'(s)=0, 
    \qquad \widetilde x(0)=0,
    \quad \widetilde x'(0)=1, \quad \widetilde x''(0)=0,\\
    \widetilde y'''(s) + \ac(-s) \widetilde y'(s)=0, 
    \qquad \widetilde y(0)=0,
    \quad \widetilde y'(0)=0, \quad \widetilde y''(0)=1.
\end{align*}
Again using Propositions \ref{prop:x_bd} and \ref{prop:y_bd} along
with
the definitions of $\widetilde x$ and $\widetilde y$
gives
\begin{align*}
     \ol x_{k_1}(s)&\le -x(-s) \le \ol  x_{k_0}(s)\\
     \ol y_{k_1}(s)&\le y(-s) \le  \ol y_{k_0}(s)
\end{align*}
on $[0,L)$.   Replacing $s$ by $-s$ and using 
$\ol x_{k_j}(-s)=- \ol x_{k_j}(s)$ and $\ol y_{k_j}(-s)= \ol y_{k_j}(s)$
shows the inequalities \eqref{x-bounds} and \eqref{y-bounds}
also hold on $(-L,0]$.  The bound \eqref{x-bounds} implies
the graphing interval contains $(-R,R)$.

The statements about when equality holds follow from the equality
cases in Propositions \ref{prop:x_bd} and \ref{prop:y_bd}.
\end{proof}

\begin{cor}\label{cor:global_graph}
Let $\C$ have the affine curvature bound $\ac\le 0$ and
assume there is an affine unit speed parameterization 
$c\cn \R \to \C$ defined on all of $\R$.  Then $c$ is
bijective and for any point $p_0\in \C$ the curve
is globally a graph $\eta=f(\xi)$ in the affine adapted coordinates at $p_0$.
\end{cor}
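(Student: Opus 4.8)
The plan is to fix an arbitrary $p_0\in\C$, pass to the standard parameterization $c=c_{\C,p_0}$ centered at $p_0$, and reduce everything to a single global statement about the first adapted coordinate. By hypothesis there is an affine unit speed parameterization defined on all of $\R$; after a shift of parameter it has value $p_0$ at $0$, so $I_{\C,p_0}=\R$. Writing $x(s)=\xi(c(s))$ and $y(s)=\eta(c(s))$, Lemma \ref{lem:adapted_xy} tells me that $u:=x'$ satisfies $u''+\ac u=0$ with $u(0)=x'(0)=1$ and $u'(0)=x''(0)=0$. The entire corollary will follow once I show $x'>0$ on all of $\R$.

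First I would establish $x'>0$ on $[0,\infty)$. Since $\ac\le 0$, the constant $0$ is an upper bound for $\ac$, and with the choice $k_0=0$ the length hypothesis $k_0\le(\pi/(2(b-a)))^2$ of Lemma \ref{lem:Sturm}(b) holds vacuously on every interval $[0,L]$ regardless of its length. Applying that lemma to $u=x'$ on $[0,L]$ gives $x'\neq 0$ on $(0,L)$; since $x'(0)=1>0$ and $x'$ is continuous, this forces $x'>0$ on $[0,L)$, and as $L$ is arbitrary I get $x'>0$ on $[0,\infty)$. To reach the negative parameters I would reuse the reflection trick from the proof of Theorem \ref{thm:coord_bds}: the function $s\mapsto x'(-s)$ solves $u''+\ac(-s)u=0$ with the same initial data and $\ac(-s)\le 0$, so the same argument yields $x'(-s)>0$ for $s\ge 0$. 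Hence $x'>0$ on all of $\R$.

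With $x'>0$ globally, $x$ is strictly increasing and therefore injective; since $x=\xi\circ c$, any coincidence $c(s_1)=c(s_2)$ forces $x(s_1)=x(s_2)$ and thus $s_1=s_2$, so $c$ is injective, and being a parameterization of $\C$ it is bijective onto $\C$. For the graphing statement I would invoke Lemma \ref{lem:graphing}: the graphing parameter set $I_{\C,p_0}^*$ contains the connected component of $0$ in $\{s\in\R : x'(s)>0\}$. But this set is all of $\R$, which is connected, so $I_{\C,p_0}^*=\R=I_{\C,p_0}$, and the whole curve is the graph $\eta=f(\xi)$ of the function $f$ produced in Lemma \ref{lem:graphing}.

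The only genuinely delicate point is the passage from local to global. Theorem \ref{thm:coord_bds} produces the graph property only on bounded windows $(-L,L)$, because of the length restriction $k_1\le(\pi/2L)^2$ built into the forward positivity of the comparison kernels. I expect the crux to be the observation — rather than any computation — that the sign hypothesis $\ac\le 0$ permits the upper bound $k_1=0$, which dissolves that restriction entirely and makes the non-vanishing of $x'$ valid on intervals of arbitrary length. Everything else is a routine exhaustion of $\R$ by such intervals together with the already-established regularity ($\C$ is $C^4$, so the graphing function $f$ is genuinely $C^4$ by Proposition \ref{prop:C4}).
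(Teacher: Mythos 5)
Your proof is correct, and the underlying mechanism is the same as the paper's: exhaust $\R$ by bounded intervals, exploit the fact that $\ac\le 0$ lets you take the \emph{upper} comparison constant equal to $0$ so that the length restriction $k_1\le(\pi/2L)^2$ becomes vacuous, and conclude with the graphing lemma. The difference is packaging. The paper's proof simply cites Theorem \ref{thm:coord_bds} on each window $(-L,L)$, taking $k_1=0$ and $k_0=\min\{\ac(s):s\in[-L,L]\}$ (finite by continuity on a compact interval), notes the graphing parameter set contains $(-L,L)$ and the graphing interval contains $(-\si_0(L),\si_0(L))=(-L,L)$, and lets $L\to\infty$. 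You instead work one level down, applying Lemma \ref{lem:Sturm}(b) and the reflection trick directly to $u=x'$ to obtain $x'>0$ on all of $\R$ in a single pass, and then invoke Lemma \ref{lem:graphing} once; this is essentially an inlining of the first half of the proof of Theorem \ref{thm:coord_bds}, restricted to exactly what the corollary needs. Your version buys two small things: it never introduces the irrelevant lower curvature bound $k_0$ at all, and it makes the bijectivity assertion explicit ($x=\xi\circ c$ is strictly increasing, hence $c$ is injective), a point the paper's proof leaves implicit. What the paper's route buys is brevity. One remark: Lemma \ref{lem:Sturm} as printed never states the hypothesis $\ac\le k_0$ relating the constant to the coefficient; that assumption is implicit (it is what the comparison in its proof uses), and you correctly supplied it with $k_0=0$, just as the paper does in its other applications of that lemma.
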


\begin{proof}  Let $p_0\in \C$.  Without loss of generality 
	we can assume $c(0)=p_0$.  
	Let $L>0$.  In Theorem \ref{thm:coord_bds} 
    let $k_0 = \min\{ \ac(s): s\in [-L,L]\}$ and $k_1=0$
	to see the graphing parameter set contains 
	$(-L,L)$ and the graphing interval contains
	$(-s_{k_1}(L),s_{k_1}(L)) = (-L,L)$ (as $\si_{k_1}(s) = \si_{0}(s)=s$).
	Letting $L\to \infty$ finishes the proof. 
\end{proof}

\begin{example}
To see that an upper bound on the affine curvature is necessary in
this corollary note for any $k>0$, let $\C$ be the circle with
equation $x^2+y^2 = k^{-3/4}$.  Then $c\cn \R\to \C$ given by
$$
c(s)= \left(k^{-3/4}\cos(k^{1/2}s), k^{-3/4}\sin(k^{1/2}s)\right)
$$
is unit affine speed and $\C$ has constant curvature $k$,
but $c$ is not injective and $\C$ is not globally a graph in 
any coordinate system.
\end{example}

\begin{thm}\label{thm:triangle2}
Let $\C$ have curvature bounds $k_0\le \ac\le k_1$ with
$k_0$, $k_1$ constants with $k_1\le (\pi / \aff(\C))^2$.
Let $p_1,p_2,p_3$ be distant points on $\C$.  Let $L=\aff(\C)/2$.
Then
$$
\area(\triangle p_1p_2p_3)\le \ol x_{k_0}(L)\ol y _{k_0}(L).
$$
Equality holds if and only if $\C$ has constant curvature $k_0$ 
and, after maybe reordering, the points $p_1$, $p_2$, and $p_3$
are the initial point, midpoint, and endpoint of $\C$.
\end{thm}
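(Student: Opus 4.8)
The plan is to show that the whole curve $\C$ sits inside an explicit rectangle and then apply the elementary fact that a triangle inscribed in a rectangle has area at most half the rectangle. First I would parameterize $\C$ by affine arc length as $c\cn[-L,L]\to\C$ with $L=\aff(\C)/2$, so that $c(0)$ is the midpoint of $\C$, and introduce the affine adapted coordinates $\xi,\eta$ at $p_0=c(0)$, writing $x(s)=\xi(c(s))$ and $y(s)=\eta(c(s))$. Since $(-L,L)\subseteq I_{\C,c(0)}$ and the hypothesis $k_1\le(\pi/\aff(\C))^2$ is exactly $k_1\le(\pi/(2L))^2$, Theorem~\ref{thm:coord_bds} applies and gives, on $(-L,L)$, the bounds $|x(s)|\le\ol x_{k_0}(|s|)$ from \eqref{x-bounds} and $\ol y_{k_1}(s)\le y(s)\le\ol y_{k_0}(|s|)$ from \eqref{y-bounds}. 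Because $\ol x_{k_0},\ol y_{k_0}$ are increasing on $[0,L]$ and $|s|\le L$, and because $\ol y_{k_1}(s)>0$ for $0<|s|\le L$ (so $y\ge 0$), this yields $|x(s)|\le\ol x_{k_0}(L)$ and $0\le y(s)\le\ol y_{k_0}(L)$; by continuity these persist at $s=\pm L$. Hence $\C$ lies in the closed rectangle $B=[-\ol x_{k_0}(L),\ol x_{k_0}(L)]\times[0,\ol y_{k_0}(L)]$.

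Next, since $p_1,p_2,p_3\in\C\subseteq B$ and $B$ is convex, the triangle $\triangle p_1p_2p_3\subseteq B$. I would then prove the rectangle estimate directly: with vertices $(x_i,y_i)$ one has $2\,\area(\triangle p_1p_2p_3)=|x_1(y_2-y_3)+x_2(y_3-y_1)+x_3(y_1-y_2)|$, and maximizing the right-hand side first over $x_i\in[-\ol x_{k_0}(L),\ol x_{k_0}(L)]$ (the expression is linear in each $x_i$) gives the bound $\ol x_{k_0}(L)\bigl(|y_2-y_3|+|y_3-y_1|+|y_1-y_2|\bigr)=\ol x_{k_0}(L)\cdot 2(\max_i y_i-\min_i y_i)\le 2\,\ol x_{k_0}(L)\,\ol y_{k_0}(L)$, using $0\le y_i\le\ol y_{k_0}(L)$. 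Dividing by $2$ gives $\area(\triangle p_1p_2p_3)\le\ol x_{k_0}(L)\,\ol y_{k_0}(L)$, which is the asserted bound. A pleasant feature of this route is that it needs only the coordinate bounds, not monotonicity of $x$, so no case distinction between short and long arcs is required.

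For equality I would trace back the two inequalities just used. Equality forces $\max_i y_i-\min_i y_i=\ol y_{k_0}(L)$, so one vertex has $\eta=0$ and one has $\eta=\ol y_{k_0}(L)$. Since $y(s)\ge\ol y_{k_1}(s)>0$ for $s\ne0$, the value $\eta=0$ is attained only at the midpoint $c(0)=(0,0)$, so one vertex is $c(0)$. As its $\xi$-coordinate equals $0$ rather than an extreme value $\pm\ol x_{k_0}(L)$, equality in the $x$-maximization forces the coefficient $y_j-y_k$ of that vertex to vanish, i.e.\ the other two vertices both satisfy $\eta=\ol y_{k_0}(L)$; the remaining factor then forces their $\xi$-coordinates to be $+\ol x_{k_0}(L)$ and $-\ol x_{k_0}(L)$. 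Attaining $|x(s)|=\ol x_{k_0}(L)=\ol x_{k_0}(|s|)$ forces $|s|=L$ (strict monotonicity of $\ol x_{k_0}$) and, by the equality clause of Theorem~\ref{thm:coord_bds} (equivalently Propositions~\ref{prop:x_bd}--\ref{prop:y_bd}), forces $\ac\equiv k_0$. Thus $c(\pm L)=(\pm\ol x_{k_0}(L),\ol y_{k_0}(L))$, and the three vertices are exactly the initial point $c(-L)$, midpoint $c(0)$, and endpoint $c(L)$ of a curve of constant curvature $k_0$. Conversely, this configuration has area $\ol x_{k_0}(L)\,\ol y_{k_0}(L)$ by the computation in Figure~\ref{fig:low_example}.

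I expect the main obstacle to be the equality analysis rather than the inequality. The delicate points are ruling out the competing maximizing configuration with two vertices on the floor $\eta=0$ (excluded precisely because $\C$ meets $\eta=0$ in the single point $c(0)$, whose $\xi$-coordinate is the non-extreme value $0$), and carefully upgrading ``a vertex touches a corner of $B$'' to ``$\ac\equiv k_0$'' via the rigidity statements already proved for the coordinate bounds.
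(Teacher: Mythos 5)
Your proof is correct and takes essentially the same route as the paper's: adapted coordinates centered at the midpoint of $\C$, the rectangle bound supplied by Theorem~\ref{thm:coord_bds}, the elementary fact that a triangle inscribed in a rectangle has at most half its area, and the rigidity clauses of the coordinate bounds (Propositions~\ref{prop:x_bd} and \ref{prop:y_bd}) for the equality case. You simply make explicit the shoelace maximization and the equality bookkeeping that the paper's proof leaves implicit.
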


\begin{proof}
Let $p_0$ be the midpoint of $\C$ and construct the affine adapted coordinates
$\xi$, $\eta$ for $\C$ centered at $p_0$.  Label the points $p_1$, $p_2$
and $p_3$ so that they are increasing order along $\C$.
By
Theorem \ref{thm:coord_bds} the curve lies inside the rectangle
defined by $-\ol x_{k_0}(L)\le \xi\le \ol x_{k_0}(L)$
and $0 \le \eta \le \ol y_{k_0}(L)$ as shown in Figure \ref{fig:triangle2}.
Any triangle in inside this rectangle has area at most half the area
of the rectangle and therefore is at most $\ol x_{k_0}(L)\ol y_{k_0}(L)$.
The only way that equality can hold is if the $p_1$ is the initial 
point of $\C$ and $p_1$ has coordinates $(-\ol x_{k_0}(L), \ol y_{k_0}(L))$,
$p_2=p_0$ is the midpoint of $\C$ and $p_3$ is the endpoint of $\C$
and has coordinates $(-\ol x_{k_0}(L), \ol y_{k_0}(L))$. 
This implies equality holds in the upper bounds of Theorem \ref{thm:coord_bds}
and therefore $\C$ has constant curvature $k_0$.
\end{proof}

\begin{figure}[ht]
\begin{overpic}[width=2.5in]{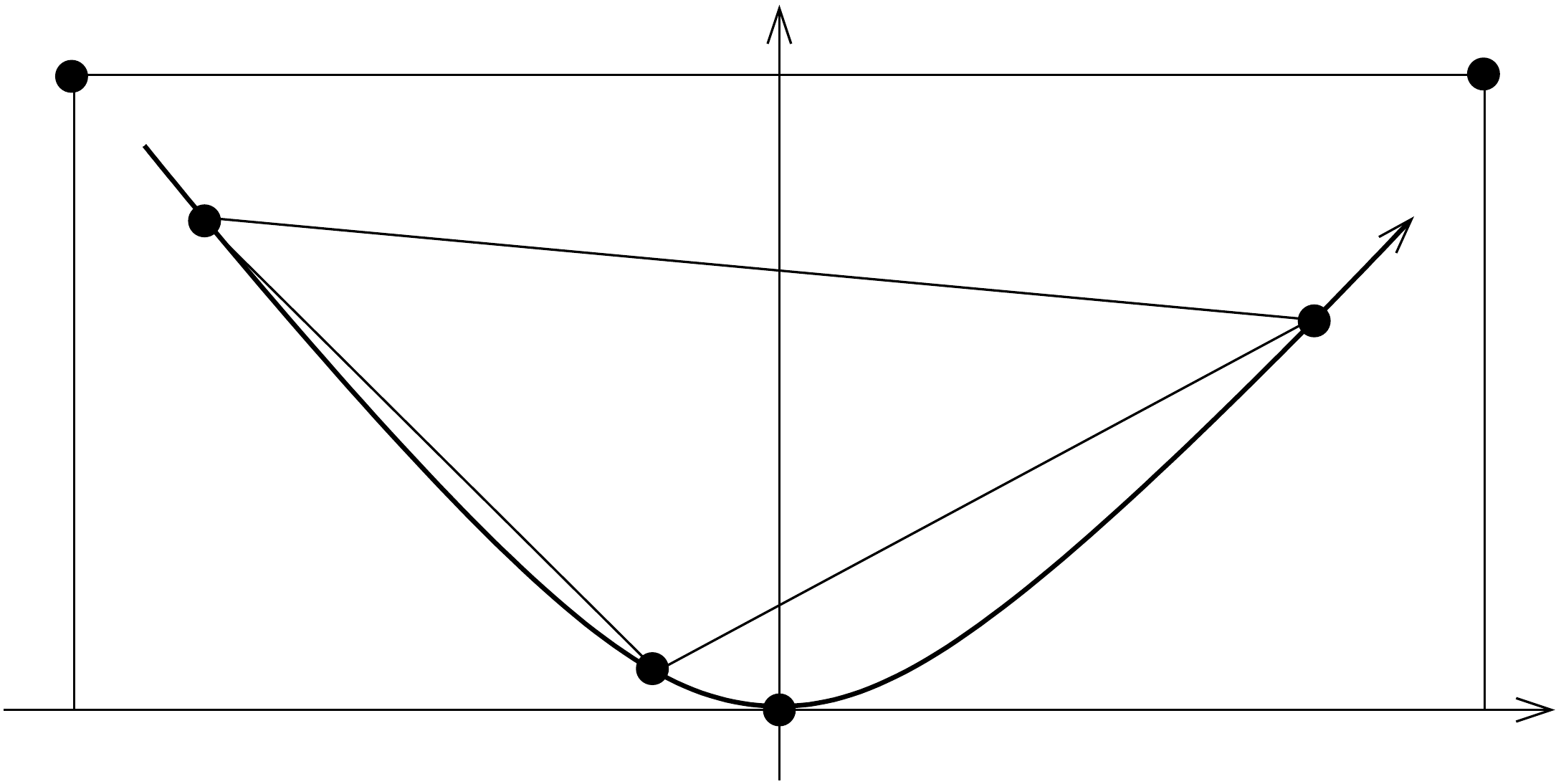}
\put(51,1){$p_0$}
\put(41,11){$p_2$}
\put(13,39){$p_1$}
\put(82,33){$p_3$}
\put(82,33){$p_3$}
\put(98,7){$\xi$}
\put(51,48){$\eta$}
\put(79,48){$(\ol x_{k_0}(L), \ol y_{k_0}(L))$}
\put(-15,48){$(-\ol x_{k_0}(L), \ol y_{k_0}(L))$}
\end{overpic}
\caption{Let $L=\aff(\C)/2$.  Then Theorem \ref{thm:coord_bds}
implies $\C$ is inside the pictured rectangle.  Therefore
$\triangle p_1p_2p_2$ has area at most half the area of this rectangle
and equality only holds when $p_1$ is the upper left corner,
$p_3$ is the upper right corner, and $p_1=p_0$ is on the $\xi$-axis.}
\label{fig:triangle2}
\end{figure}

\section{Bounds for the number of lattice points on curve.}
\label{sec:lattice}

\begin{defn}
Let $v_0, v_1,v_2\in \R^2$ with $v_1$ and $v_2$ linearly independent.
The \bi{lattice with origin $v_0$ and generated by $v_1$ and
$v_2$} is the set
$$
\lat(v_0,v_1,v_2):=\{ v_0+ mv_1+nv_2: m,n\in \Z\}.
$$
\end{defn}
The most basic invariant of a lattice is the area of its fundamental 
domain, $A_\lat$.  If $\lat = \lat(v_0,v_1,v_2)$ this is given by
$$
A_\lat = | v_1\wedge v_2|.
$$

\begin{prop}\label{prop:lat-triangle}
If $p_1,p_2,p_3$ are three nonlinear points in $\lat = \lat(v_0,v_1,v_2)$,
then the area of the triangle $\triangle p_1p_2p_3$ satisfies 

$$
\area(\triangle p_1p_2p_3) = \frac{\m}2 A_\lat.
$$
for some positive integer $\m$ and therefore  $\triangle p_1p_2p_3\ge 
A_\lat/2$.
\end{prop}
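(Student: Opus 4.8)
The plan is to reduce everything to the integer lattice coordinates of the three points and to the standard expression of a triangle's area as a wedge product. First I would write each vertex in terms of the generating data, $p_i = v_0 + m_i v_1 + n_i v_2$ with $m_i, n_i \in \Z$, so that the edge vectors based at $p_1$ are $p_2 - p_1 = (m_2 - m_1) v_1 + (n_2 - n_1) v_2$ and $p_3 - p_1 = (m_3 - m_1) v_1 + (n_3 - n_1) v_2$. The translate $v_0$ cancels in each difference, which is precisely why the origin of the lattice plays no role in the area.

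Next I would invoke the formula $\area(\triangle p_1 p_2 p_3) = \tfrac12\,|(p_2 - p_1)\wedge(p_3 - p_1)|$ together with the bilinearity and antisymmetry of the wedge $\wedge$ from Section~\ref{sec:prelims} (in particular $v_1 \wedge v_1 = v_2 \wedge v_2 = 0$ and $v_2 \wedge v_1 = -(v_1 \wedge v_2)$). Expanding the wedge product collapses everything to a single scalar multiple of $v_1 \wedge v_2$:
\begin{equation*}
(p_2 - p_1) \wedge (p_3 - p_1) = D\,(v_1 \wedge v_2),\qquad
D := (m_2 - m_1)(n_3 - n_1) - (n_2 - n_1)(m_3 - m_1).
\end{equation*}
The quantity $D$ is the determinant of a $2\times 2$ matrix with integer entries, hence an integer.

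Setting $\m := |D|$, I then obtain $\area(\triangle p_1 p_2 p_3) = \tfrac12\,|D|\,|v_1 \wedge v_2| = \tfrac{\m}{2}\,A_\lat$, using the recorded formula $A_\lat = |v_1 \wedge v_2|$. It remains only to confirm that $\m$ is a \emph{positive} integer, equivalently that $D \neq 0$. This is exactly where the non-collinearity hypothesis enters: the three points are collinear if and only if the edge vectors $p_2 - p_1$ and $p_3 - p_1$ are linearly dependent, and since $v_1, v_2$ are assumed linearly independent (a basis of $\R^2$), linear dependence of these edge vectors is equivalent to the vanishing of their coefficient determinant $D$. Thus non-collinearity forces $D \neq 0$, so $\m \geq 1$ and $\area(\triangle p_1 p_2 p_3) = \tfrac{\m}{2}\,A_\lat \geq A_\lat/2$.

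I do not anticipate any genuine obstacle: the computation is a one-line expansion of a wedge product in the lattice basis. The only point deserving a word of care is the equivalence between the geometric condition (the three points are not collinear) and the algebraic condition ($D \neq 0$), and this follows immediately from the linear independence of $v_1$ and $v_2$.
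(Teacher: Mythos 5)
Your proposal is correct and follows essentially the same route as the paper's proof: write each vertex as $p_j = v_0 + m_j v_1 + n_j v_2$, expand $\tfrac12|(p_2-p_1)\wedge(p_3-p_1)|$ by bilinearity to get $\tfrac12|D|\,|v_1\wedge v_2|$ with $D$ the integer determinant, and set $\m=|D|$. Your explicit justification that non-collinearity forces $D\neq 0$ is a small point the paper leaves implicit, but the argument is otherwise identical.
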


\begin{proof}
If $p_j = v_0 + m_jv_1 + n_jv_2$, then 
\begin{align*}
\area(\triangle& p_1p_2p_3) = \frac12|(p_2-p_1)\wedge (p_3-p_1)|\\
&= \frac12|(m_2-m_1)(n_3-n_1) - (n_2-n_1)(m_3-m_1)||v_1\wedge v_2|\\
&= \frac{\m}2 A_\lat
\end{align*}
where $\m=|(m_2-m_1)(n_3-n_1) - (n_2-n_1)(m_3-m_1)|$ is a positive integer.
\end{proof}

As an example to motivate the following definition 
consider the ellipse defined by $ax^2+bxy+cy^2=R$ where
$a,c,R$ are odd integers and $b$ is an even integer.
Let $\lat =\Z^2$ be the standard integral lattice.
If $p=(x,y)\in \lat\cap \C$ then reducing 
$ax^2 +bxy+cy^2 = R^2$ modulo $2$ (and using 
$x^2\equiv x\space\operatorname{mod} 2$) gives 
$x+y\equiv 1 \operatorname{mod}2$.  Whence 
$p\equiv (1,0)$ or $p\equiv (0,1)$ modulo $2$.
Thus if  $p_j = (x_j,y_j)\in \lat\cap \C$ for $j=1,2,3$,
then for at least at least one pair from $\{p_1,p_2,p_3\}$,
say $p_1,p_2$ we have $p_1\equiv p_2 \operatorname{mod} 2$.
This implies $(p_2-p_1)\wedge (p_3-p_1)$ is
even and therefore 
$\area(\triangle p_1p_2p_3)=\frac12 |(p_2-p_1)\wedge (p_3-p_1)|$
is an integer and whence is twice as large as $1/2$ which
the minimum area of a general triangle with vertices
in $\Z^2$.

\begin{defn}\label{def:m_C}
    Let $\lat$ be a lattice and $\C$ a curve.  Then $\m(\C,\lat)$
    is the largest positive integer so that 
    $$
    \area(\triangle p_1p_2p_2) \ge \frac{\m(\C,\lat)}{2} A_\lat
    $$
    for all distinct points $p_1,p_2,p_3 \in \lat\cap \C$.
\end{defn}

Defining and using the integer $\m(\C,\lat)$ 
to improve lattice point estimates is an abstraction
of an idea in the paper \cite{Ramana} of Ramana where a
related integer, $m_{ad}$ is defined for the integer lattice
and integral conics of the form $ax^2 + dy^2 = R$ and 
the lattice $\Z^2$.

\begin{defn}\label{def:F_bar}
For each $k\in \R$ let $\ol F_k$ be the inverse of function $\ol A_k$.  (The
function $\ol A_k$ is strictly increasing and therefore this inverse exists.)
\end{defn}

\begin{thm}\label{thm:2pts1}
Let $\C$ have be convex, $C^4$, and with affine length $\aff(\C)$
and a lower bound $\ac \ge k_0$ on its affine curvature.  Let $\m=\m(\C,\lat)$.
If 
\begin{equation}\label{eq:Ak0L}
\ol A_{k_0}(\aff(\C)) \le \frac{\m A_\lat}{2} 
\end{equation}
then, there are at most two points of $\lat$ on $\C$.
\end{thm}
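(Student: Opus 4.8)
The plan is to argue by contradiction, pitting the combinatorial lower bound on triangle areas encoded in $\m(\C,\lat)$ against the geometric upper bound of Proposition~\ref{prop:inside_tri}. So I would suppose, contrary to the conclusion, that $\C$ contains at least three points of $\lat$. Since $\C$ is convex and $C^4$ with $\ac\ge k_0$, its Euclidean curvature is positive and $\C$ is strictly convex; hence no line meets $\C$ in more than two points, so any three distinct points $p_1,p_2,p_3\in\lat\cap\C$ are non-collinear and $\triangle p_1p_2p_3$ is non-degenerate. Because $\aff(\C)$ is finite, $\C$ is a compact arc carrying only finitely many lattice points, so the set of such triples is finite; together with Proposition~\ref{prop:lat-triangle}, which makes each triangle area a positive integer multiple of $A_\lat/2$, this guarantees that $\m=\m(\C,\lat)$ is a well-defined positive integer.

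Next I would chain the two bounds on a single chosen triple. By the defining property of $\m(\C,\lat)$ in Definition~\ref{def:m_C}, every triple of distinct lattice points on $\C$ satisfies $\area(\triangle p_1p_2p_3)\ge \tfrac{\m}{2}A_\lat$. Combining this with the standing hypothesis \eqref{eq:Ak0L}, namely $\ol A_{k_0}(\aff(\C))\le \tfrac{\m}{2}A_\lat$, gives $\area(\triangle p_1p_2p_3)\ge \ol A_{k_0}(\aff(\C))$. On the other hand, Proposition~\ref{prop:inside_tri} — whose hypotheses, $\C$ convex with $\ac\ge k_0$, are exactly the ones assumed here — yields the \emph{strict} inequality $\area(\triangle p_1p_2p_3)<\ol A_{k_0}(\aff(\C))$. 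These two statements are incompatible, so $\C$ cannot carry three or more lattice points, and therefore $\#(\lat\cap\C)\le 2$.

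The argument is essentially a one-line combination of the two ingredients, so I do not expect any serious obstacle; the care is entirely in the bookkeeping. The first delicate point is confirming that $\m(\C,\lat)$ is genuinely defined in the relevant case, which follows from strict convexity ruling out collinear triples so that every triple contributes a strictly positive area. The second is ensuring that the estimate from Proposition~\ref{prop:inside_tri} is strict, since that strictness is precisely what converts $\area\ge \ol A_{k_0}(\aff(\C))$ into a genuine contradiction rather than a borderline equality. If one prefers to avoid invoking $\m$ abstractly, one may instead pick any three lattice points, write their area as $\tfrac{\m'}{2}A_\lat$ with $\m'\ge\m$ via Proposition~\ref{prop:lat-triangle} and the minimality built into Definition~\ref{def:m_C}, and note $\tfrac{\m'}{2}A_\lat\ge\tfrac{\m}{2}A_\lat\ge\ol A_{k_0}(\aff(\C))$, again contradicting Proposition~\ref{prop:inside_tri}.
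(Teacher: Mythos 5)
Your proof is correct and takes essentially the same route as the paper: both argue by contradiction, chaining the lower bound $\tfrac{\m}{2}A_\lat \le \area(\triangle p_1p_2p_3)$ from Proposition~\ref{prop:lat-triangle} and Definition~\ref{def:m_C} against the strict upper bound of Proposition~\ref{prop:inside_tri} to contradict \eqref{eq:Ak0L}. Your added checks (non-collinearity of lattice triples via strict convexity, and well-definedness of $\m(\C,\lat)$) are sound points of bookkeeping that the paper leaves implicit, not a different argument.
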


\begin{proof}
	If there are three or more points of $\lat$ on $c$, let
	$p_1$, $p_2$, and $p_3$ be three of them.  By Propositions
	\ref{prop:lat-triangle} and  \ref{prop:inside_tri} 
    and the definition of $\m=\m(\C,\lat)$ 
	this implies
	$$
    \frac{\m A_\lat}{2} \le \area(\triangle p_1p_2p_3) < \ol{A}_{k_0}(\aff(\C))
	$$
	which contradicts \eqref{eq:Ak0L}.  
\end{proof}

\begin{thm}\label{thm:low_aff_bd}
Let $\lat$ be a lattice, $\C$ a $C^4$ curve
 with a lower curvature bound $\ac \ge k_0$ on
its affine curvature, and $\m=\m(\C,\lat)$ as in definition
\ref{def:m_C}.  Assume
that $\C$ is convex, or more generally that any sub-arc of
affine length at most $F_{k_0}(A_\lat/2)$ is convex. 
Then
$$
\# (\lat \cap \C) \le 2\left\lceil \frac{\Lambda(\C)}{F_{k_0}(\m A_\lat/2)} \right\rceil.
$$
\end{thm}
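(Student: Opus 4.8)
The plan is to chop $\C$ into short consecutive sub-arcs, show that each piece can carry at most two lattice points, and then add up. First I would set $\ell_0 := F_{k_0}(\m A_\lat/2)$, so that by Definition~\ref{def:F_bar} we have $\ol A_{k_0}(\ell_0) = \m A_\lat/2$, and record that $\ol A_{k_0}$ is strictly increasing (hence so is its inverse $F_{k_0}$). Everything will hinge on comparing the lattice lower bound $\m A_\lat/2$ for triangle areas against the geometric upper bound coming from Proposition~\ref{prop:inside_tri}.

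The core step is the claim that any convex sub-arc $\C'$ of $\C$ with affine length $\aff(\C') \le \ell_0$ contains at most two points of $\lat$. Suppose instead $p_1,p_2,p_3\in \lat\cap\C'$ were distinct. Being three lattice points on $\C$, the defining property of $\m=\m(\C,\lat)$ in Definition~\ref{def:m_C} gives $\area(\triangle p_1p_2p_3)\ge \m A_\lat/2$. On the other hand $\C'$ is convex with $\ac\ge k_0$, so Proposition~\ref{prop:inside_tri} applied to $\C'$ yields $\area(\triangle p_1p_2p_3) < \ol A_{k_0}(\aff(\C')) \le \ol A_{k_0}(\ell_0) = \m A_\lat/2$, the middle step being monotonicity of $\ol A_{k_0}$. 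The contradiction proves the claim; this is precisely the mechanism of Theorem~\ref{thm:2pts1}, now applied to a piece rather than to all of $\C$.

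Next I would partition $\C$ into $N := \lceil \Lambda(\C)/\ell_0\rceil$ consecutive sub-arcs of equal affine length $\Lambda(\C)/N \le \ell_0$, taking them half-open along the curve (with the terminal point absorbed into the last piece) so that shared endpoints are not double counted and every point of $\lat\cap\C$ lies in exactly one piece. Each closed piece has affine length at most $\ell_0$ and inherits the curvature bound $\ac\ge k_0$ and the $C^4$ regularity from $\C$, so by the claim it carries at most two lattice points, and the half-open pieces carry no more. Summing over the $N$ pieces gives $\#(\lat\cap\C)\le 2N = 2\lceil \Lambda(\C)/F_{k_0}(\m A_\lat/2)\rceil$, which is the asserted bound.

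The main obstacle is supplying the convexity needed to invoke Proposition~\ref{prop:inside_tri} on each piece. When $\C$ is globally convex this is immediate, since every sub-arc of a convex curve is convex, and the partition argument goes through verbatim. Under the weaker hypothesis I would need the arc spanned by any three candidate lattice points, which has affine length at most $\ell_0$, to be convex, and so I must check that the length threshold in the convexity assumption actually covers the pieces produced by the partition. This is the one delicate point where the two quantities $F_{k_0}(A_\lat/2)$ and $F_{k_0}(\m A_\lat/2)$ must be reconciled, and it is where I would concentrate the care in turning this sketch into a complete argument.
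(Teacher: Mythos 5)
Your argument is correct and is essentially the paper's own proof: the paper likewise observes that any sub-arc of affine length at most $F_{k_0}(\m A_\lat/2)$ contains at most two lattice points (citing Theorem~\ref{thm:2pts1}, whose mechanism you re-derive inline from Proposition~\ref{prop:inside_tri} and Definition~\ref{def:m_C}), then covers $\C$ by $\left\lceil \Lambda(\C)/F_{k_0}(\m A_\lat/2)\right\rceil$ equal sub-arcs and sums. The mismatch you flag at the end, between the convexity threshold $F_{k_0}(A_\lat/2)$ in the hypothesis and the piece length $F_{k_0}(\m A_\lat/2)$ needed in the partition (the latter is larger when $\m>1$ since $F_{k_0}$ is increasing), is a defect of the theorem's statement rather than of your argument --- the paper's own proof silently assumes convexity of sub-arcs of length $F_{k_0}(\m A_\lat/2)$, so the hypothesis should presumably be stated with $\m A_\lat/2$.
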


\begin{proof}
	By Theorem \ref{thm:2pts1} any sub-arc of $\C$ with affine length 
	at most $F_{k_0}(\m A_\lat/2)$ contains at most two points of $\lat$.
	Let $m = \left\lceil \Lambda(\C)/ F_{k_0}(\m A_\lat/2)\right\rceil$. 
    By dividing $\C$ 
	into $m$ sub-arcs of affine length $L/m$  we cover $c$
	with $m$ sub-arcs with affine length $\le F_{k_0}(\m A_\lat/2)$.
	As each of these sub-arcs contains at most two points of 
	of $\lat$ the total number of points in $\lat \cap \C $
	is at most $2m$.
\end{proof}

\begin{thm}\label{thm:2pts2}
Let $\C$ have affine curvature bounds $k_0\le \ac\le k_1$ with $k_1\le (\pi/\Lambda(\C))^2$.
Let $\lat$ be a lattice and let $L =\aff(\C)/2$ and $\m = \m(\C,\lat)$.  If
\begin{equation}\label{eq:xy<=A/2}
\ol x_{k_0}(L)\ol y_{k_0}(L)\le \frac{\m A_\lat}{2},
\end{equation} 
then
$$
\#(\lat \cap \C) \le 3.
$$
The equality  $\#(\lat \cap \C) =3$ holds if and only if 
$\C$ has constant curvature 
$k_0$, equality holds in \eqref{eq:xy<=A/2} 
and the three points of $\lat$ on $\C$ are the two endpoints
of $\C$ along with the midpoint of $\C$.
\end{thm}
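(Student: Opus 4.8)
The plan is to combine the sharp triangle-area bound of Theorem~\ref{thm:triangle2} with the lattice lower bound packaged in $\m(\C,\lat)$, in the same spirit as the proof of Theorem~\ref{thm:2pts1} but now exploiting the two-sided curvature bound so that the constant $\ol x_{k_0}(L)\ol y_{k_0}(L)$ and its equality case become available. First I would dispose of the trivial situation: if $\#(\lat\cap\C)\le 2$ the conclusion holds outright, so I may assume $\lat\cap\C$ contains at least three points, which makes $\m=\m(\C,\lat)$ well defined. Since $\C$ carries a well-defined affine curvature it has positive Euclidean curvature and hence is strictly convex, so any three distinct points of $\lat\cap\C$ are non-collinear and Proposition~\ref{prop:lat-triangle} applies to them.

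The heart of the argument is to squeeze the area of an arbitrary triangle with vertices in $\lat\cap\C$. For distinct $p_1,p_2,p_3\in\lat\cap\C$ the definition of $\m$ gives $\area(\triangle p_1p_2p_3)\ge \m A_\lat/2$, while Theorem~\ref{thm:triangle2}, whose hypotheses hold because $k_1\le(\pi/\Lambda(\C))^2$ and $L=\aff(\C)/2$, gives $\area(\triangle p_1p_2p_3)\le \ol x_{k_0}(L)\ol y_{k_0}(L)$. Combined with the standing hypothesis \eqref{eq:xy<=A/2}, this yields
$$
\frac{\m A_\lat}{2}\le \area(\triangle p_1p_2p_3)\le \ol x_{k_0}(L)\ol y_{k_0}(L)\le \frac{\m A_\lat}{2},
$$
forcing equality throughout. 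In particular every such triangle attains equality in Theorem~\ref{thm:triangle2}, so by that theorem's equality clause each triple $\{p_1,p_2,p_3\}$ must coincide, as a set, with the fixed three-element set $S$ consisting of the initial point, midpoint, and endpoint of $\C$.

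To conclude $\#(\lat\cap\C)\le 3$ I would run a one-line pigeonhole step: every lattice point of $\C$ lies in some triple of lattice points, each such triple equals $S$, so every lattice point belongs to $S$; since $|S|=3$ there can be at most three of them. For the characterization of the equality $\#(\lat\cap\C)=3$, the forward direction is immediate from the displayed squeeze, which already forces equality in \eqref{eq:xy<=A/2} and in Theorem~\ref{thm:triangle2}, whence $\C$ has constant curvature $k_0$ and the three points are the endpoints and midpoint of $\C$. Conversely, if $\C$ has constant curvature $k_0$, equality holds in \eqref{eq:xy<=A/2}, and the two endpoints together with the midpoint lie in $\lat$, these are three distinct points of $\lat\cap\C$, so $\#(\lat\cap\C)\ge 3$; combined with the bound $\le 3$ this gives equality.

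The step I expect to require the most care is the application of the equality clause of Theorem~\ref{thm:triangle2}: I must be sure that equality in the area bound pins the three vertices down to \emph{exactly} the initial point, midpoint, and endpoint of $\C$ as an unordered set, since it is precisely this rigidity that powers the pigeonhole argument ruling out a fourth lattice point. Once that is secured, everything else is a direct substitution into inequalities already established, so no further delicate estimates are needed.
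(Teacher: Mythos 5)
Your proposal is correct and follows essentially the same route as the paper's proof: squeeze $\area(\triangle p_1p_2p_3)$ between the lattice lower bound $\m A_\lat/2$ from Proposition~\ref{prop:lat-triangle} and the upper bound $\ol x_{k_0}(L)\ol y_{k_0}(L)$ from Theorem~\ref{thm:triangle2}, then invoke that theorem's rigidity clause to pin every triple of lattice points to the endpoints and midpoint of $\C$. The only cosmetic difference is that the paper justifies non-collinearity by noting (via Theorem~\ref{thm:coord_bds}) that the three points lie on a convex graph, whereas you appeal to strict convexity from positive Euclidean curvature; both are valid, and the rest of your argument, including the pigeonhole step and the equality discussion, matches the paper's.
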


\begin{proof}
If there are more than two points of $\lat$ on $\C$, let $p_1$, $p_2$, $p_3$
be three of them.  By Theorem \ref{thm:coord_bds} these points
are on a convex graph and therefore are not collinear.  Thus
by Proposition \ref{prop:lat-triangle}, Theorem \ref{thm:triangle2},
the definition of $\m$ and the inequality \eqref{eq:xy<=A/2} 
$$
\frac{\m A_\lat}{2} \le \area(\triangle p_1p_2p_3) \le \ol x_{k_0}(L)\ol y_{k_0}(L)\le
\frac{\m A_\lat}{2}.
$$
Therefore equality holds in Theorem \ref{thm:triangle2}, which happens if 
and only if these three points are the midpoint of $\C$ along with the
endpoints of $\C$ and $\C$ has constant affine curvature $k_0$.  This
shows that any size three subset of $\lat\cap \C$ consists of
the endpoints and midpoint of $\C$ and thus $\C$ has at most
three points.
\end{proof}

\begin{lemma}\label{lem:x_bar_y_bar_homo}
	Let $k\in \R$ and define intervals $I_k$ and $J_k$ by
$$
I_k := \begin{cases}
	[0,\infty),& k\le 0;\\
	[0,\pi/2\sqrt k\,],& k>0.
	\end{cases}\hspace{.5in}
J_k:= \begin{cases}
	[0,\infty),& k\le 0;\\
	[0,1/k^{3/2}],& k>0.
	\end{cases}
$$
and let $H_k$ be defined on $I_k$ by $H_k(s) = \ol x_k(s)\ol y_k(s)$.
Then $H_k$ is a homeomorphism between $I_k$ and $J_k$.
\end{lemma}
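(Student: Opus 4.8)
The plan is to reduce the statement to the standard fact that a continuous, strictly increasing bijection between two intervals is automatically a homeomorphism; thus it suffices to show that $H_k$ is continuous, strictly increasing on $I_k$, vanishes at $s=0$, and carries the right endpoint of $I_k$ onto the right endpoint of $J_k$. First I would record, from Lemma \ref{lem:xbar,ybar}, the explicit formulas $\ol x_k = \si_k$ and $\ol y_k = (1-\co_k)/k$ (with $\ol y_0(s) = s^2/2$), so that $H_k = \si_k\,\ol y_k$. Continuity is then immediate from the closed forms for $\si_k$ and $\co_k$.

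The key step is monotonicity, and here the definition of $I_k$ does the work. Using \eqref{trig_ders} (namely $\si_k' = \co_k$ and $\co_k' = -k\si_k$) one finds $\ol y_k' = \si_k$ (directly when $k=0$, and from $-\co_k'/k$ otherwise), so the product rule gives
$$
H_k'(s) = \co_k(s)\,\ol y_k(s) + \si_k(s)^2.
$$
On $I_k$ all three quantities $\co_k$, $\ol y_k$, and $\si_k$ are nonnegative: $\ol y_k \ge 0$ holds everywhere, $\si_k \ge 0$ on $I_k$, and $\co_k \ge 0$ on $I_k$ precisely because the right endpoint of $I_k$ was chosen to be the first zero of $\co_k = \ol x_k'$ (for $k>0$ this is the statement $\cos(\sqrt k\,s)\ge 0$ for $0\le s\le \pi/(2\sqrt k)$, while for $k\le 0$ one has $\co_k = 1$ or $\cosh$). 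On the interior of $I_k$ the term $\si_k(s)^2$ is strictly positive, so $H_k' > 0$ and $H_k$ is strictly increasing.

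Finally I would check the endpoints. In every case $H_k(0) = 0$. For $k>0$ we get $H_k(\pi/(2\sqrt k)) = \si_k(\pi/(2\sqrt k))\,\ol y_k(\pi/(2\sqrt k)) = (1/\sqrt k)(1/k) = k^{-3/2}$, the right endpoint of $J_k$; for $k \le 0$ both factors tend to $\infty$, so $H_k(s)\to\infty$ and $J_k = [0,\infty)$. Combining these with monotonicity and continuity shows $H_k$ is a continuous increasing bijection of $I_k$ onto $J_k$, hence a homeomorphism. I expect the only real subtlety to be the case $k>0$: one must verify that $\ol x_k = \si_k$ is still increasing across all of $I_k$, and this is exactly why $I_k$ terminates at $\pi/(2\sqrt k)$, the first point where $\co_k$ vanishes; beyond it $\ol x_k$ decreases and $H_k$ would lose monotonicity, so the interval in the statement is sharp for this argument.
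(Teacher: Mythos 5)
Your proof is correct and follows essentially the same route as the paper: establish that $H_k$ is continuous and strictly increasing on $I_k$, check that $H_k(0)=0$ and that $H_k$ attains the right endpoint of $J_k$ (the value $k^{-3/2}$ at $\pi/(2\sqrt k)$ when $k>0$, or $\infty$ in the limit when $k\le 0$), and invoke the standard fact that a continuous strictly increasing bijection of intervals is a homeomorphism. The only cosmetic difference is that the paper gets monotonicity by observing each factor $\ol x_k$, $\ol y_k$ is strictly increasing (and nonnegative), whereas you differentiate the product directly; both verifications rest on the same sign information about $\co_k$, $\si_k$, and $\ol y_k$ on $I_k$.
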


\begin{proof}
	It is elementary to check that each of $\ol x_k$ and $\ol y_k$
	are strictly increasing on $I_k$, and therefore $H_k$ is 
	also strictly increasing.  Also $\lim_{s\to \infty} H_k(s)=\infty$
	when $k\le 0$ and $H_k(\pi/2\sqrt k) = 1/k^{3/2}$ when $k>0$.
	This implies $H_k$ is a bijective continuous map between
	the intervals and thus a homeomorphism.
\end{proof}

\begin{defn}\label{def:G_k}
Let $G_k\cn J_k\to I_k$ be the inverse of the map $H_k$ of Lemma~\ref{lem:x_bar_y_bar_homo}

\end{defn}

\begin{thm}\label{thm:sharp_lat}  
    Let $\C$ be a curve with affine curvature
    bounds $k_0\le \ac \le k_1$.  Let $\lat$ be a lattice, 
    $\m = \m(\C,\lat)$, and set $L =
    G_{k_0}(\m A_\lat/2)$ and $m = \lfloor \Lambda(\C)/(2L)\rfloor$.  
    If $k_1>0$ also assume $k_1\le (\pi/(2L))^2$.  Then 
	$$
	\#( \lat \cap \C) \le 2m +2.
	$$
\end{thm}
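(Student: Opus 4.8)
The plan is to reduce this to the two-point bound of Theorem \ref{thm:2pts1} via a subdivision argument, exactly as in the proof of Theorem \ref{thm:low_aff_bd}, but with the sharper threshold length $L = G_{k_0}(\m A_\lat/2)$ coming from the triangle-area bound rather than the weaker area bound $\ol A_{k_0}$. The key observation is that $L$ is chosen so that $\ol x_{k_0}(L)\,\ol y_{k_0}(L) = \m A_\lat/2$, since $L = G_{k_0}(\m A_\lat/2)$ and $G_{k_0}$ is the inverse of $H_{k_0}(s) = \ol x_{k_0}(s)\ol y_{k_0}(s)$ by Definition \ref{def:G_k} and Lemma \ref{lem:x_bar_y_bar_homo}. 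Thus any sub-arc $\C'$ of $\C$ of affine length at most $2L$ has half-length at most $L$ and, by Theorem \ref{thm:triangle2} applied to $\C'$, every inscribed triangle satisfies $\area(\triangle p_1p_2p_3) \le \ol x_{k_0}(L)\ol y_{k_0}(L) = \m A_\lat/2$.

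First I would verify that Theorem \ref{thm:triangle2} applies to each such sub-arc $\C'$: its curvature bounds $k_0 \le \ac \le k_1$ are inherited from $\C$, and the hypothesis $k_1 \le (\pi/\Lambda(\C'))^2$ needed there follows from the assumption $k_1 \le (\pi/(2L))^2$ together with $\Lambda(\C') \le 2L$ (when $k_1 > 0$; when $k_1 \le 0$ there is nothing to check). Next I would invoke Theorem \ref{thm:2pts2}, or more directly the combination of Proposition \ref{prop:lat-triangle} with the triangle bound, to conclude that $\C'$ contains at most two lattice points: if it contained three distinct lattice points $p_1,p_2,p_3$, these would be noncollinear (they lie on a convex graph by Theorem \ref{thm:coord_bds}), so Proposition \ref{prop:lat-triangle} and the definition of $\m = \m(\C,\lat)$ would force $\area(\triangle p_1p_2p_3) \ge \m A_\lat/2$, contradicting the strict inequality unless the arc realizes the extremal configuration.

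Finally I would partition $\C$ into $m = \lfloor \Lambda(\C)/(2L)\rfloor$ sub-arcs, but with an endpoint-sharing refinement to get the additive constant right. Dividing $\C$ into sub-arcs each of affine length at most $2L$ requires covering a total length $\Lambda(\C)$; using $m = \lfloor \Lambda(\C)/(2L)\rfloor$ sub-arcs each of length $2L$ covers length $2Lm \le \Lambda(\C)$, so I would use $m$ full sub-arcs plus possibly one shorter terminal piece, and count lattice points carefully across shared endpoints. Since consecutive sub-arcs share an endpoint, naively summing two points per arc overcounts; the clean bookkeeping gives at most $2m+2$ total. The main obstacle I anticipate is precisely this endpoint accounting: obtaining the constant $+2$ rather than a larger slack requires arranging the subdivision so that shared boundary lattice points are not double-counted and so that the leftover arc of length less than $2L$ contributes at most two new points. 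I would handle this by choosing the subdivision points to lie at lattice points wherever possible and arguing that each of the $m+1$ closed sub-arcs contributes its two endpoints with interior points counted once, yielding the stated bound $\#(\lat \cap \C) \le 2m+2$.
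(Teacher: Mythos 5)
Your skeleton is the paper's: subdivide $\C$ into $m$ arcs of affine length $2L$ plus a shorter terminal arc, note that $L=G_{k_0}(\m A_\lat/2)$ means $\ol x_{k_0}(L)\ol y_{k_0}(L)=\m A_\lat/2$, and apply Theorem \ref{thm:triangle2} (through Theorem \ref{thm:2pts2}) to each piece. But there is a genuine gap at the decisive counting step. Your claim that each sub-arc contains at most two lattice points is false for a \emph{closed} arc of length exactly $2L$: Theorem \ref{thm:triangle2} gives only the non-strict bound $\area(\triangle p_1p_2p_3)\le \ol x_{k_0}(L)\ol y_{k_0}(L)=\m A_\lat/2$, so Proposition \ref{prop:lat-triangle} produces no contradiction and Theorem \ref{thm:2pts2} permits three lattice points on such an arc. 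This is not a vacuous worry --- the examples of Section \ref{sec:examples} (the parabola and the hyperbola $x^2-xy-y^2=1$) are engineered precisely so that every closed arc of length $2L$ starting at a lattice point carries three lattice points. You acknowledge the exception (``unless the arc realizes the extremal configuration'') but never dispose of it, and you misdiagnose the difficulty: double-counting shared endpoints is harmless for an upper bound (if every closed arc really had at most two points, summing over the $m+1$ closed arcs would immediately give $2m+2$); the danger is the three-point arcs, which naively give $3(m+1)$.

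The missing idea, and the paper's resolution, is to use the rigidity clause of Theorem \ref{thm:2pts2}: \emph{if} a closed arc of length $2L$ contains three lattice points, then two of them must be the arc's endpoints (and the third its midpoint). Hence the half-open arcs $\C_j=\{c(s): 2(j-1)L\le s< 2jL\}$, $j=1,\ldots,m$, each contain at most two lattice points; and the terminal closed arc has half-length strictly less than $L$, so strict inequality holds in \eqref{eq:xy<=A/2}, the equality case of Theorem \ref{thm:2pts2} is excluded, and it too contains at most two. These $m+1$ sets cover $\C$, giving $2m+2$. By contrast, the scheme you sketch (closed arcs, counting ``new'' points across shared endpoints) loses exactly one point when made precise: the first closed arc can contribute three and each subsequent arc two, i.e.\ $2m+3$; and ``choosing subdivision points to lie at lattice points wherever possible'' is not a well-defined procedure and does nothing to rule out the extremal arcs. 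To complete your proof, replace that step by the half-open-arc device, justified by the rigidity statement of Theorem \ref{thm:2pts2}.
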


\begin{proof}
		and let $c\cn [0,\aff(\C)] \to \R^2$ be an affine unit speed parameterization
	of $\C$.  For $j=1,2 ,\ldots, m$ define sub-arcs of $\C$ by
	\begin{align*}
		\C_j &:= \{c(s): 2(j-1)L \le s <  2jL\}\\
		\C_j^*&:= \{ c(s) : 2(j-1)L \le s \le  2jL\}\\
		\C_{m+1}&:= \{ c(s) : 2mL \le s \le \aff(\C)\}.
	\end{align*}
	Then $\C_j$ is just $\C_j^*$ with its right endpoint removed.
	The affine length of $\C_j^*$ is $2L=G_{k_0}(\m A_\lat/2)$ 
	and by the definition $G_{k_0}$ we have $\ol x_{k_0}(L)\ol y_{k_0}(L)=
	\m A_\lat/2$. Therefore by Theorem \ref{thm:2pts2} the arc $\C_j^*$
	contains at most $3$ points of $\lat$ and if it does contain $3$ points,
	then two of these points are endpoints of $\C_j^*$.  Thus $\C_j$
	contains at most two points of $\lat$.  The arc $\C_{m+1}$ 
	has affine length less than $2L$, using Theorem \ref{thm:2pts2} again,
	it contains at most two points of $\lat$.  As 
	$\C = \C_1 \cup \C_2\cup \cdots \cup \C_{m+1}$ this implies
	$\C\cap \lat$ contains at most $2(m+1)$ points.
\end{proof}

There is a rigidity version of this result.

\begin{thm}\label{thm:rigid_lat}
	Let $\C$ satisfy the hypothesis of Theorem \ref{thm:sharp_lat}
	with the extra assumption that
	$$
	m =  \frac{\aff(\C)}{2L}
	$$
	is an integer.  Then if $\C$ is open (that is not the boundary of a 
    bounded convex
	domain) then
	$$
	\#(\lat \cap \C) \le 2m+1.
	$$
	Equality holds if and only if $\C$ has constant curvature
	$k_0$ and the points of $\lat \cap \C$ are evenly spaced along 
	$\C$ with respect 
	to affine arc length at a distance of $L=G_{k_0}(\m A_\lat/2)$ 
	between consecutive points. In particular the endpoints of $\C$
	are in $\lat$.
\end{thm}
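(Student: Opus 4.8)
The plan is to refine the arc-decomposition argument of Theorem \ref{thm:sharp_lat}, exploiting that $\aff(\C)=2mL$ exactly when $m$ is an integer, so that the leftover arc $\C_{m+1}$ collapses to a single point. Fix an affine unit speed parameterization $c\cn[0,2mL]\to\C$, which is injective since $\C$ is open and embedded. For $j=1,\dots,m$ set $\C_j^*=\{c(s): 2(j-1)L\le s\le 2jL\}$ and $\C_j=\{c(s): 2(j-1)L\le s<2jL\}$. By injectivity the $\C_j$ are disjoint and $\C=\C_1\cup\cdots\cup\C_m\cup\{c(2mL)\}$ with $c(2mL)$ lying in none of the $\C_j$. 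Exactly as in Theorem \ref{thm:sharp_lat}, each $\C_j^*$ has affine length $2L$ with $L=G_{k_0}(\m A_\lat/2)$, so $\ol x_{k_0}(L)\ol y_{k_0}(L)=\m A_\lat/2$ and Theorem \ref{thm:2pts2} shows $\C_j^*$ carries at most three points of $\lat$, two of them its endpoints when it carries three; removing the right endpoint gives $\#(\lat\cap\C_j)\le 2$. Summing over $j$ and adding the single possible point $c(2mL)$ yields $\#(\lat\cap\C)\le 2m+1$.

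For the equality analysis, first suppose $\#(\lat\cap\C)=2m+1$. Since each $\#(\lat\cap\C_j)\le 2$ and the extra point contributes at most $1$, equality forces $\#(\lat\cap\C_j)=2$ for every $j$ and $c(2mL)\in\lat$. I would then run a downward induction on $j$ from $m$ to $1$ with inductive hypothesis $c(2jL)\in\lat$. Given this, $\C_j^*=\C_j\cup\{c(2jL)\}$ contains the two points of $\C_j$ together with the distinct lattice point $c(2jL)$, hence exactly three points of $\lat$. The decisive step is to identify these three points: letting $\triangle$ be the triangle they span, Proposition \ref{prop:lat-triangle} gives $\area(\triangle)=\tfrac{\mu}{2}A_\lat$ with $\mu\ge\m$ by the definition of $\m(\C,\lat)$, while Theorem \ref{thm:triangle2} applied to $\C_j^*$ gives $\area(\triangle)\le\ol x_{k_0}(L)\ol y_{k_0}(L)=\tfrac{\m}{2}A_\lat$. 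These squeeze $\mu=\m$ and force equality in Theorem \ref{thm:triangle2}, whose rigidity clause yields that $\C_j^*$ has constant affine curvature $k_0$ and that its three lattice points are the endpoints $c(2(j-1)L),c(2jL)$ and the midpoint $c((2j-1)L)$. In particular $c(2(j-1)L)\in\lat$, which advances the induction.

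The main obstacle is precisely this propagation step: one must check that the area double inequality genuinely pins $\mu=\m$, so that the \emph{equality} hypothesis of Theorem \ref{thm:triangle2} is met rather than the weak inequality sufficient for the counting bound, and that the constant-curvature conclusions on the overlapping arcs $\C_j^*$ stitch together consistently. Running the induction down to $j=1$ produces $c(0)\in\lat$ and identifies the full intersection as $\{c(jL): 0\le j\le 2m\}$, which is $2m+1$ points evenly spaced at affine distance $L$ and including both endpoints $c(0),c(2mL)$; since $\C=\bigcup_{j=1}^{m}\C_j^*$ and each piece has constant curvature $k_0$, so does all of $\C$. Conversely, if $\C$ has constant affine curvature $k_0$ and $\lat\cap\C$ is evenly spaced at affine distance $L$ with the endpoints of $\C$ included, then these points are exactly $c(0),c(L),\dots,c(2mL)$, so $\#(\lat\cap\C)=2m+1$, closing the equivalence.
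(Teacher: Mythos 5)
Your proof is correct, and its counting half coincides with the paper's: the same partition of $\C$ into $m$ half-open blocks of affine length $2L$, with the leftover arc degenerating to the single point $c(2mL)$ because $m=\aff(\C)/(2L)$ is an integer. The equality analysis is where you depart from the paper. The paper runs an induction on $m$: it splits $\C$ into the first block $\C'$ and the remainder $\C''$, shows the junction point $c(2L)$ must lie in $\lat$ by a counting contradiction (if not, the rigidity clause of Theorem \ref{thm:2pts2} caps $\#(\lat\cap\C')$ at $2$ and the induction hypothesis caps $\#(\lat\cap\C'')$ at $2m$, too few in total), and then finishes by citing Theorem \ref{thm:2pts2} on $\C'$ and the induction hypothesis on $\C''$. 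You instead propagate from right to left: equality in the count forces $\#(\lat\cap\C_j)=2$ for every $j$ and $c(2mL)\in\lat$, and then the squeeze $\tfrac{\m}{2}A_\lat\le\area(\triangle)\le\ol x_{k_0}(L)\,\ol y_{k_0}(L)=\tfrac{\m}{2}A_\lat$ (Proposition \ref{prop:lat-triangle} together with Theorem \ref{thm:triangle2}, in effect the proof of Theorem \ref{thm:2pts2} unwound) forces the rigidity case of Theorem \ref{thm:triangle2} on each $\C_j^*$, identifying its three lattice points as its endpoints and midpoint and handing $c(2(j-1)L)\in\lat$ to the next step. Both arguments use the same decomposition and the same rigidity input; yours is somewhat more direct, since the junction points fall out of rigidity itself rather than from a contradiction that re-invokes the induction hypothesis, while the paper's version gets to quote Theorem \ref{thm:2pts2} as a black box instead of reassembling its proof. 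Two things you use silently and should state: the three points of $\lat\cap\C_j^*$ are non-collinear (needed for Proposition \ref{prop:lat-triangle}; it follows from Theorem \ref{thm:coord_bds}, exactly as in the paper's proof of Theorem \ref{thm:2pts2}), and the bound $k_1\le(\pi/(2L))^2$ inherited from the hypotheses of Theorem \ref{thm:sharp_lat} is precisely what licenses applying Theorem \ref{thm:triangle2} to each block $\C_j^*$ of affine length $2L$. You also prove the trivial converse direction of the equality statement, which the paper leaves implicit; that is a harmless addition.
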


\begin{proof}
	That $m$ is an integer implies in the proof of Theorem \ref{thm:sharp_lat} 
	that $\C_{m+1}$ is just the one point set $\{c(\aff(\C))\}$.
	Thus each $\C_j$ contains at most two points of $\lat$ for
	$j=1,2 ,\ldots, m$ and $\C_{m+1}$ contains at most one point
	of $\lat$.  Thus $\C =  \C_1 \cup \C_2\cup \cdots \cup \C_{m+1}$
	contains at most $2m+1$ points of $\lat$.

	We prove that when equality holds that $\C$ has constant curvature and
	that the points  of $\C\cap \lat$ are evenly spaced by induction on
	$m$.  If $m=1$, then the result follows from Theorem \ref{thm:2pts2}
	because $\ol x_{k_0}(L)\ol y_{k_0}(L)=\m A_\lat/2$.  

	Assume the result holds
	for $m$ and let $\C$ be a curve with 
	$\aff(\C) = (m+1) G_{k_0}(\m A_\lat/2)$ and $\#(\lat \cap \C)=2m+3$.
	Let $c\cn [0, \aff(\C)]\to
	\C$ be an affine unit speed parameterization of $\C$ and
	let $\C':= \{ c(s): 0\le s\le 2L\}$ and 
	$\C'':=\{ c(s): 2L \le s \le (m+1)L=\aff(\C)\}$. 
	Then $\C'$ has at most $3$ points and $\C''$ has at most
	$2m+1$ points.  The intersection $\C'\cap \C''$ only
	has the one point $c(2L)$.  If this point is not in $\lat$,
	then by the induction hypothesis $\#(\lat \cap \C')\le 2$
	and $\#(\lat \cap \C'')\le 2m$ and the set $\lat \cap \C'$
	and $\lat \cap \C''$ have no point in common.  Thus
	$\#(\lat \cap \C) = \#( \lat\cap (\C'\cup \C'')) \le 2 + 2m$
	contradicting that $\#(\lat \cap \C)=2m+3$.  Therefore
	$\lat\cap \C'$ and $\lat\cap \C''$ have one point in common, whence
	$$
	\#(\lat \cap \C) = \#(\lat \cap \C')+\#(\lat \cap \C'')-1
	\le 3 + 2m+1 - 1 = 2m+3.
	$$
	Thus the assumption $\#(\lat \cap \C)=2m+3$ implies
	$\#(\lat \cap \C')=3$ and $\#(\lat \cap \C'')=2m+1$.  Therefore
	the induction hypothesis implies $\C'$ and $\C''$, and therefore
	$\C$, have constant affine curvature $k_0$ and the points
	are equality spaced at a distance of $L$ between consecutive 
	points.
\end{proof}

\section{Examples.}
\label{sec:examples}

In this section we give examples to show our theorems bounding the number of
lattice points on a curve are sharp. To simplify things in all of these
examples 
will have $\m(\lat,\C) = 1$.

\subsection{Examples when $k_0=0$.}
In this case the function $H_{k_0}=H_0$ of Lemma \ref{lem:x_bar_y_bar_homo}
is given by  $H_0(s) = s^3/2$ and therefore its inverse (cf.\ Definition
\ref{def:G_k}) is 
$$
G_0(s)= (2s)^{1/3}.
$$
Let $\lat = \lat(v_0,v_1,v_2)$ be a lattice.    By
possibly replacing $v_2$ by $-v_2$ we may assume $v_1\wedge v_2>0$.
Let $\alpha = (v_1\wedge v_2)^{-1/3}$ and let $\C$ be the parabola
parameterized by 
$$
c(s) = v_0 + (\alpha s) v_2 + \frac{(\alpha s)(\alpha s+1)}{2} v_2.
$$
A bit of calculation shows
$$
c'(s)\wedge c''(s) = \alpha^3 v_1\wedge v_2 =1.
$$
Therefore $c$ is an affine unit speed parameterization of $\C$.
Let $s_j=j/\alpha$.  Then
$$
p_j:= c(s_j) = v_0 + j v_1 + \frac{j(j+1)}{2} v_2 \in \lat
$$
and the affine distance between $p_{j+1}$ and $p_j$ is
$s_{j-1}-s_j=1/\alpha$.
The $L$ of Theorems \ref{thm:sharp_lat} and \ref{thm:rigid_lat}  is
given by

$$
L=G_0(A_\lat/2) = (A_\lat)^{1/3} = (v_1\wedge v_2)^{1/3} = 1/\alpha, 
$$ 
which
is the affine distance between $p_j$ and $p_{j+1}$ on $\C$. 

Letting $k_0=0$ and $k_1>0$ with $k_1\le (\pi/2L)^2$ we then have that the
restriction $\C\big|_{p_1}^{p_{2m+1}}$, that is the arc of $\C$ between $p_1$
and $p_{2m+1}$, gives an example where equality holds in Theorem
\ref{thm:rigid_lat} and thus also in Theorem \ref{thm:2pts2}.
The curve $\C\big|_{p_0}^{p_{m+1}}$ is an example where equality holds
in Theorem \ref{thm:sharp_lat}.

\subsection{Constructing closely spaced lattice points on conics.}

\begin{lemma}\label{lem:lat=lat'}
    Let $\lat = \lat(v_0,v_1,v_2)$ and $\lat'=\lat(v_0',v_1',v_2')$
    Assume $\lat' \subseteq \lat$ and $A_\lat = A_{\lat'}$.
    Then $\lat = \lat'$.
\end{lemma}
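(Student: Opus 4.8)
The plan is to reduce the statement about the affine lattices $\lat$ and $\lat'$ to one about their underlying \emph{linear} lattices, and then to invoke the classical fact that a sublattice of the same covolume as the ambient lattice must be the whole lattice. The substantive content is entirely a determinant computation.

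First I would pass to the linear lattices $\lat_0 := \Z v_1 + \Z v_2$ and $\lat_0' := \Z v_1' + \Z v_2'$, which are exactly the difference sets $\lat - \lat$ and $\lat' - \lat'$ (subtracting two elements $v_0 + mv_1 + nv_2$ cancels the origin and leaves an integer combination of $v_1,v_2$). Since $\lat' \subseteq \lat$, any two elements of $\lat'$ lie in $\lat$, so their difference lies in $\lat_0$; as these differences range over all of $\lat_0'$ we obtain $\lat_0' \subseteq \lat_0$. In particular $v_1', v_2' \in \lat_0$, so there is an integer matrix $A = (a_{ij})$ with $v_i' = a_{i1}v_1 + a_{i2}v_2$ for $i=1,2$.

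Next I would track the effect on the fundamental-domain areas. Expanding by bilinearity and using $v_1\wedge v_1 = v_2 \wedge v_2 = 0$ and $v_2 \wedge v_1 = -(v_1 \wedge v_2)$ gives $v_1' \wedge v_2' = \det(A)\,(v_1 \wedge v_2)$, hence $A_{\lat'} = |\det A|\,A_\lat$. Because $v_1,v_2$ are linearly independent, $A_\lat = |v_1\wedge v_2| \ne 0$, so the hypothesis $A_\lat = A_{\lat'}$ forces $|\det A| = 1$. An integer matrix with $\det A = \pm 1$ has an inverse with integer entries, so solving the relations $v_i' = \sum_j a_{ij} v_j$ expresses each of $v_1,v_2$ as an integer combination of $v_1',v_2'$. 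Therefore $\lat_0 \subseteq \lat_0'$ as well, and combined with the reverse inclusion we get $\lat_0 = \lat_0'$.

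Finally I would match the two origins. Since $v_0' \in \lat' \subseteq \lat$, write $v_0' = v_0 + w$ with $w \in \lat_0 = \lat_0'$. Then $\lat' = v_0' + \lat_0' = v_0 + w + \lat_0' = v_0 + \lat_0' = v_0 + \lat_0 = \lat$, which is the desired conclusion. The only step requiring any care is the determinant identity $v_1'\wedge v_2' = \det(A)(v_1\wedge v_2)$ together with the passage from $|\det A| = 1$ to invertibility of $A$ over $\Z$; everything else is bookkeeping in reducing affine lattices to their linear difference lattices.
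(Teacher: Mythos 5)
Your proof is correct and follows essentially the same route as the paper's: express $v_1',v_2'$ as integer combinations of $v_1,v_2$, use the wedge-product identity to force $|\det A|=1$, and invert the integer matrix (Cramer's rule) to get the reverse inclusion. The only difference is bookkeeping for the origin --- the paper re-centers $\lat$ at $v_0'$ at the outset, while you factor through the difference lattices and match origins at the end --- which is an equivalent and equally clean way to handle the affine translation.
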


\begin{proof}
    As  $\lat' \subseteq \lat$ we have $v_0'\in \lat$ and
    therefore  we can write $\lat$ as $\lat=\lat(v_0',v_1,v_2)$.
    Then $\lat' \subseteq \lat$ implies there are integers $a_{ij}$ so 
    that
    \begin{align*}
        v_0'+v_1'&= v_0' + a_{11}v_1+a_{12}v_2\\
        v_0'+v_2'&= v_0' + a_{21} v_1 + a_{22} v_2
    \end{align*}
    The equality
    $A_\lat = A_{\lat'}$ implies $v_1\wedge v_2= \pm v_1'\wedge v_2'$.
    Therefore 
    \begin{align*}
        v_1'\wedge v_2'&= ( a_{11}v_1+a_{12}v_2)\wedge(a_{21} v_1 + a_{22} v_2)\\
                       &= (a_{11}a_{22}- a_{12}a_{21})v_1\wedge v_2 \\
                       &= \pm (a_{11}a_{22}- a_{12}a_{21})v_1'\wedge v_2'
    \end{align*} 
    Thus $a_{11}a_{22}- a_{12}a_{21}= \pm1$.  Whence if $[b_{ij}]= [a_{ij}]^{-1}$
    is the inverse of the matrix $[a_{ij}]$, then, using Cramers's rule
    for the inverse, we see the numbers $b_{ij}$ are integers.
    Then
     \begin{align*}
        v_0'+v_1&= v_0' + b_{11}v_1'+b_{12}v_2'\\
        v_0'+v_2&= v_0' + b_{21} v_1 '+ b_{22} v_2'
    \end{align*}
    This implies $\lat \subseteq \lat'$ and thus $\lat = \lat'$.
\end{proof}

\begin{lemma}\label{lem:prservess}
  Let $\lat$ be a lattice and $\phi$ an affine motion of $\R^2$ so
  that for some points $p_0, p_1, p_2\in \lat$ we have
  $\phi(p_0),\phi(p_1), \phi(p_2)\in \lat$ and 
 $$
     \area(\triangle \phi(p_0)\phi(p_1)\phi(p_2)) = \frac{A_\lat}{2.}
 $$ 
 Then $\phi$ preserves the lattice $\lat$.
\end{lemma}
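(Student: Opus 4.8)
The plan is to exhibit $\phi$ as carrying one lattice-basis presentation of $\lat$ to another, and then to invoke Lemma~\ref{lem:lat=lat'}. Write $\phi(v)=Mv+b$ with $\det M=1$ and set $q_j:=\phi(p_j)\in\lat$ for $j=0,1,2$. The crux is the observation that a triangle with vertices in $\lat$ and area exactly $A_\lat/2$ has edge vectors forming a basis of the lattice: writing the $q_j$ in lattice coordinates as in the proof of Proposition~\ref{prop:lat-triangle}, the hypothesis $\area(\triangle q_0q_1q_2)=A_\lat/2$ forces the integer $\m$ of that proposition to equal $1$, which says precisely that the $2\times2$ integer matrix expressing $q_1-q_0$ and $q_2-q_0$ in the basis $v_1,v_2$ is unimodular. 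Hence $q_1-q_0$ and $q_2-q_0$ are a $\Z$-basis of the difference lattice, and since $q_0\in\lat$ we obtain $\lat=\lat(q_0,q_1-q_0,q_2-q_0)$.

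Next I would push a presentation of $\lat$ forward by $\phi$. Because $\det M=1$ preserves area, the triangle $\triangle p_0p_1p_2$ also has area $A_\lat/2$, so the same argument gives $\lat=\lat(p_0,p_1-p_0,p_2-p_0)$. As $\phi$ is affine, $M(p_i-p_0)=\phi(p_i)-\phi(p_0)=q_i-q_0$, so $\phi(\lat)=\lat(q_0,q_1-q_0,q_2-q_0)$. Each generator $q_0$, $q_1-q_0$, $q_2-q_0$ lies in $\lat$, whence $\phi(\lat)\subseteq\lat$; moreover $A_{\phi(\lat)}=|(q_1-q_0)\wedge(q_2-q_0)|=2\area(\triangle q_0q_1q_2)=A_\lat$. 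Lemma~\ref{lem:lat=lat'} now yields $\phi(\lat)=\lat$, i.e.\ $\phi$ preserves $\lat$.

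The only real content is the first step, the identification of area $A_\lat/2$ with unimodularity of the edge vectors, which is exactly the $\m=1$ case of Proposition~\ref{prop:lat-triangle}; everything afterward is the formal fact that an affine map commutes with forming integer combinations, combined with Lemma~\ref{lem:lat=lat'}. The one hypothesis to watch is $\det M=1$, needed so that the preimage triangle also has area $A_\lat/2$. This costs nothing even for a general invertible affine map: since $\area(\triangle p_0p_1p_2)=\frac{\m_p}{2}A_\lat$ for a positive integer $\m_p$ by Proposition~\ref{prop:lat-triangle}, the relation $A_\lat/2=|\det M|\,\area(\triangle p_0p_1p_2)$ forces $|\det M|=1$ and $\m_p=1$.
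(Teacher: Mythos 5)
Your main argument is correct and is essentially the paper's own proof: both arguments show that each of the two triangles has area $A_\lat/2$, deduce that the edge vectors of each triangle give a presentation of $\lat$, and then push the presentation at $p_0$ forward by $\phi$; your unimodularity step via Proposition \ref{prop:lat-triangle} is just an inlined form of Lemma \ref{lem:lat=lat'}, which the paper instead cites for both triangles (your final appeal to Lemma \ref{lem:lat=lat'} is then redundant, since you already know $\lat=\lat(q_0,q_1-q_0,q_2-q_0)$).

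One point must be corrected, though: the claim in your closing paragraph that the hypothesis $\det M=1$ ``costs nothing'' for a general invertible affine map is false. From $\area(\triangle q_0q_1q_2)=|\det M|\,\area(\triangle p_0p_1p_2)$ and Proposition \ref{prop:lat-triangle} you get $1=|\det M|\,\m_p$, but $|\det M|$ is an arbitrary positive real, not an integer, so this only forces $|\det M|=1/\m_p$; nothing makes $\m_p=1$. Indeed the lemma genuinely fails without the determinant hypothesis: take $\lat=\Z^2$, $p_0=(0,0)$, $p_1=(2,0)$, $p_2=(0,1)$, and $\phi(x,y)=(x/2,\,y)$. Then $\phi(p_0)=(0,0)$, $\phi(p_1)=(1,0)$, $\phi(p_2)=(0,1)$ all lie in $\lat$ and $\area\big(\triangle \phi(p_0)\phi(p_1)\phi(p_2)\big)=\tfrac12=A_\lat/2$, yet $\phi(\lat)=\tfrac12\Z\times\Z\ne\lat$; here $\m_p=2$ and $|\det M|=1/2$. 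So the determinant-one hypothesis (the paper's ``affine motion'') is essential, and it enters your proof exactly where you use it: to transfer the area hypothesis from the image triangle back to $\triangle p_0p_1p_2$.
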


\begin{proof}
As $\phi$ is an affine motion it preserves area.  Therefore
$$
\area(\triangle p_0p_1p_2) = \area(\triangle \phi(p_1)\phi(p_2)\phi(p_3))=
A_\lat/2.
$$
As 
$p_0,p_1,p_2\in \lat$ we have
$\lat_1:=\lat(p_0, p_1-p_0, p_2-p_0) \subseteq \lat$.  Then
$\area(\triangle p_0p_1p_2) = A_\lat/2$ implies
$A_{\lat_1} = A_\lat$ and by Lemma \ref{lem:lat=lat'} 
we have $\lat_1=\lat$.  The same argument shows $\lat
= \lat(\phi(p_0), \phi(p_1)-\phi(p_0), \phi(p_2)-\phi(p_0)) = \lat$.
Thus the image of $\lat$ under $\phi$ is
\begin{align*}
    \phi[\lat]&= \phi[ \lat(p_0,p_1-p_0, p_2-p_0)]\\
              &= \lat(\phi(p_0),\phi(p_1)-\phi(p_0), \phi(p_2)-\phi(p_0))\\
          &=\lat
\end{align*}
as required.
\end{proof}

\begin{prop}\label{prop:equal_spaced}
    Let $\C$ be connected component of a conic  
    with constant affine curvature $k_0$ and
    let $\lat$ be a lattice.  Assume there are distinct points let
    $p_1, p_2, p_3, p_4$ listed in increasing order with
    respect to the natural orientation on $\C$ with the affine distance between $p_j$
    and $p_{j+1}=L$ for some constant $L$ and $j=1,2,3$ and with
    $$
        \area(p_2,p_3,p_4) = \frac{A_\lat}{2}.
    $$  
    Then there is a unique special affine motion $\phi$ which preserves
    both the curve $\C$ and the lattice $\lat$ and with
    $\phi(p_j)= p_{j+1}$ for $j=1,2,3$.  Also 
    \begin{enumerate}[\  (a)]
        \item for all integers $j$ we have $\phi^j(p_1)\in \lat\cap \C$ and
            for all $j$ the affine distance between $p_j:=\phi^{j-1}(p_1)$
            and $p_{j+1}:= \phi^j(p_1)$ is $L$.
        \item if $k_0$ is the curvature of $\C$ then $k_0$, $L$ 
            and $A_\lat$ are related by
            \begin{equation}\label{eq:A_L=H_k} 
                \frac{A_{\lat}}{2}= H_{k_0}(L).
    \end{equation} 
        where $H_{k_0}$ is as in Lemma \ref{lem:x_bar_y_bar_homo}.
    \end{enumerate}
\end{prop}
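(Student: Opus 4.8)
The plan is to exploit the homogeneity of a conic of constant affine curvature: such a curve is an orbit of a one-parameter group of special affine motions that act by translation in the affine arc length parameter. First I would fix an affine unit speed parameterization $c\cn I_{k_0}\to \C$ with $c(0)=p_1$, so that $p_j = c((j-1)L)$ for $j=1,2,3,4$ by the equal-spacing hypothesis. Because $\C$ has constant affine curvature $k_0$, Theorem \ref{thm:unq} gives that for each fixed $h$ the reparameterized curve $s\mapsto c(s+h)$ has the same affine curvature as $c$, hence differs from $c$ by a special affine motion; call it $\phi_h$, so that $\phi_h(c(s)) = c(s+h)$ for all admissible $s$. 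Taking $h=L$ produces a special affine motion $\phi := \phi_L$ with $\phi(c(s)) = c(s+L)$, and in particular $\phi(p_j) = c((j-1)L + L) = p_{j+1}$ for $j=1,2,3$. This $\phi$ preserves $\C$ by construction, and it is unique among special affine motions sending $p_1,p_2,p_3$ to $p_2,p_3,p_4$ since three non-collinear points determine a special affine motion (and $p_1,p_2,p_3$ are non-collinear because $\C$ is strictly convex).

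The main obstacle is showing that this $\phi$ also preserves the lattice $\lat$; this is exactly where the area hypothesis enters. I would apply Lemma \ref{lem:prservess} with the three points $p_2,p_3,p_4$: these lie in $\lat\cap\C$ (they are among the given lattice points), their triangle has area $A_\lat/2$ by hypothesis, and their images under $\phi$ are $\phi(p_2),\phi(p_3),\phi(p_4) = p_3,p_4,\phi(p_4)$. To invoke the lemma I must know all three images lie in $\lat$; $p_3,p_4\in\lat$ already, so the remaining point is to verify $\phi(p_4)\in\lat$. For this I would use that $\phi$ is the unique special affine motion carrying the non-collinear triple $p_1,p_2,p_3$ to $p_2,p_3,p_4$, and that the analogous lattice-preserving motion built from $p_2,p_3,p_4$ (which does land in $\lat$ by Lemma \ref{lem:prservess} applied to the triple $p_1,p_2,p_3\in\lat$) agrees with $\phi$; then $\phi(p_4)$ is forced into $\lat$. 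Concretely, Lemma \ref{lem:prservess} applied to $\phi$ and the triple $p_1,p_2,p_3$ already gives that $\phi$ preserves $\lat$ as soon as $\phi(p_1),\phi(p_2),\phi(p_3)=p_2,p_3,p_4\in\lat$ and $\area(\triangle p_1p_2p_3)=A_\lat/2$, and this last area equals $\area(\triangle p_2p_3p_4)=A_\lat/2$ because $\phi$ is area-preserving and maps the first triangle onto the second. So the cleaner route is to apply Lemma \ref{lem:prservess} directly to the triple $p_1,p_2,p_3$.

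With $\phi$ now a special affine motion preserving both $\C$ and $\lat$, part (a) follows by iteration: for every integer $j$, $\phi^{j-1}(p_1)\in\phi^{j-1}[\lat\cap\C] = \lat\cap\C$, and since $\phi^{j-1}(c(s)) = c(s+(j-1)L)$ the consecutive points $p_j = \phi^{j-1}(p_1) = c((j-1)L)$ and $p_{j+1}=c(jL)$ are at affine distance $L$ along $\C$. For part (b) I would pass to the affine adapted coordinates of Lemma \ref{lem:adapted_xy} centered at $p_2$, in which the standard parameterization of a constant-curvature-$k_0$ conic has coordinates $\ol x_{k_0},\ol y_{k_0}$ of Lemma \ref{lem:xbar,ybar}. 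Placing $p_2$ at the midpoint and using that $p_3,p_4$ sit symmetrically at affine parameter distances $\pm L$ (after reflecting, exactly as in the proof of Theorem \ref{thm:triangle2}), a direct computation gives $\area(\triangle p_2p_3p_4) = \ol x_{k_0}(L)\,\ol y_{k_0}(L) = H_{k_0}(L)$; equating this to the hypothesized $A_\lat/2$ yields \eqref{eq:A_L=H_k}. The only subtlety here is bookkeeping of signs and of which point plays the role of the vertex on the $\xi$-axis, which is handled exactly as in Theorem \ref{thm:triangle2}.
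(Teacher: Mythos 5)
Your proposal is correct and follows essentially the same route as the paper's proof: construct $\phi$ via Theorem \ref{thm:unq} applied to $c$ and its shift $s\mapsto c(s+L)$, apply Lemma \ref{lem:prservess} to the triple $p_1,p_2,p_3$ (whose image triangle $\triangle p_2p_3p_4$ has area $A_\lat/2$ by hypothesis), and iterate $\phi$ for part (a). The only divergence is part (b), where you compute $\area(\triangle p_2p_3p_4)=\ol x_{k_0}(L)\,\ol y_{k_0}(L)=H_{k_0}(L)$ directly in adapted coordinates centered at the midpoint $p_3$ (you wrote $p_2$, a harmless slip), while the paper instead cites Theorem \ref{thm:2pts2}; both are valid, and your direct computation is if anything more self-contained.
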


\begin{proof}
   Let $c_1\cn \R\to \C$ be an affine unit speed parametrization of
   of $\C$ with $c_1(0)=p_1$. Then $p_j=c((j-1)L)$ for $j=1,2,3,4$.
   Let $c_2(s)=c_1(s+L)$.  Then $c_2$ is an affine unit speed
   parameterization of $\C$.  As $\C$ has constant affine curvature
   Theorem \ref{thm:unq} gives us a unique affine motion $\phi$ 
   with $c_2(s) = \phi(c_1(s))$.  Then $\phi(c_1(s))= c_1(s+L)$
   for all $s$ and thus $\phi$ preserves $\C$ and 
   $\phi(p_j)=p_{j+1}$ for $j=1,2,3$.  Also 
   $$
   \area(\triangle \phi(p_1) \phi(p_2)\phi(p_3)) = \area(\triangle p_2p_3p_4)
   = \frac{A_\lat}{2}.
   $$
   Therefore, by Lemma \ref{lem:prservess}, $\phi$ preserves $\lat$.
   For all integers $j$ we have $\phi^j(p_1) = c_1(jL)\in \lat$ and
   $\phi$ preserves affine distance, therefore the affine 
   distance between $p_{j}$
   and $p_{j+1}$ is $L$.
   Finally equation \eqref{eq:A_L=H_k} holds by Theorem~\ref{thm:2pts2}.
\end{proof}

\subsection{Examples when $k_0<0$.}

We first consider the case where the lattice is $\lat = \Z\times \Z$
is the lattice of integers points in the plane.  Let $\C_0$ be
the connected component of the hyperbola with equation
$$
    x^2-xy-y^2=1
$$
which contains the point $(1,0)$.  Let 
$$
\alpha := 2^{-1/3}5^{1/6}
$$
Then a calculation shows that $c \cn \R\to \R^2$ given by
$$
    c(s) = \left( \cosh(\alpha s) - \frac{1}{\sqrt5}\sinh(\alpha s), - \frac{2}{\sqrt5}\sinh(\alpha s) \right)
$$
is a affine unit speed parameterization of $\C_0$ with  
$$
    c(0) = (1,0).
$$
It is not hard to see that
$$
    c'''(s) = \alpha^2 c'(s)
$$
and therefore $\C$ has constant curvature $k_0 = -\alpha^2$.  
Let 
$$
L = \frac{1}{\alpha} \operatorname{arcsinh}(\sqrt5 / 2).
$$
Then $\sinh(\alpha L) = \sqrt5/2$ and 
$\cosh (\alpha L) = \sqrt{(1 + (\sqrt5 /2)^2)}= 3/2$.  Therefore
$$
    c(L) = (1,-1).
$$
To find more integral points on $\C$ let $\phi$ be the 
linear  map $\phi \cn \R^2\to \R^2$ 
given by
$$
    \phi(x,y) := (x-y,-x+2y).
$$
Then $\phi$ is an affine motion and preserves both $\C$ and $\lat$.  
Define $p_1 :=c(0)=(1,0)$ and $p_2 := c(L) = (1,-1)$, 
then $\phi(p_1) = p_2$.  It follows that for all $j\in \Z$
the 
points\footnote{For $j>0$ it is not hard to check if $f_0,f_1, f_2\ldots $
    is the Fibonacci sequence defined by $f_{j+2} = f_{j+1} + f_j$, $f_0=0$,
    $f_1=1$, then for $j\ge 2$ the points
are $p_j=(f_{2j-3}, - f_{2j-2})$}.
$p_j:= c((j-1)L) = \phi^{j-1}(p_1)$ are
in $\lat \cap \C$.   Then $p_2 = (2,-3)$ and
$p_4= (5,-8)$, then $\area(\triangle p_2p_3p_4)=1/2= A_\lat/2$
(See Figure \ref{fig:hyp_example}).  
Therefore Proposition \ref{prop:equal_spaced} implies 
$H_{k_0}(L) = A_\lat/2$, or what is the same thing $G_{k_0}(A_\lat/2) = L$.

\begin{example}\label{ex:ZxZ}
    If $\C_1:= \{ c(s): 0 \le s \le (2m+1)L\}$, then this gives an example where
    equality holds in Theorem \ref{thm:sharp_lat}.   Letting $\C_2:=\{
    c(s): L \le s \le (2m+1)L\}$ gives an example where equality holds in
    Theorem \ref{thm:rigid_lat}.
\end{example}

\begin{example}
The previous example can be transferred to other lattices.
Let $\lat = \lat(v_0,v_1,v_2)$.  We can assume $v_1\wedge v_2>0$
(if not replace $v_2$ by $-v_2$). 
With $\alpha$ and $L$ as in the previous example let
$$
\beta = (v_1\wedge v_2)^{1/3} 
$$
and
$$
\widehat c(s) =  
v_0 + \left( \cosh(\alpha\beta s) - \frac1{\sqrt 5} \sinh(\alpha\beta s)  \right) v_1 - \left(  \frac{2}{\sqrt 5} \sinh(\alpha\beta s)  \right) v_2.
$$
Let $\widehat L = L/\beta$.  Then the curve 
$\{ \widehat c(s): \widehat L \le s\le  
(2m+1)\widehat L\}$ is an example 
where equality holds in Theorem \ref{thm:rigid_lat}
and for the curve $\{ \widehat c(s): 0 \le s \le
(2m+1)  \widehat L\}$ equality holds in Theorem \ref{thm:sharp_lat}.
\end{example}

\begin{figure}[h]
\begin{overpic}[width=3in]{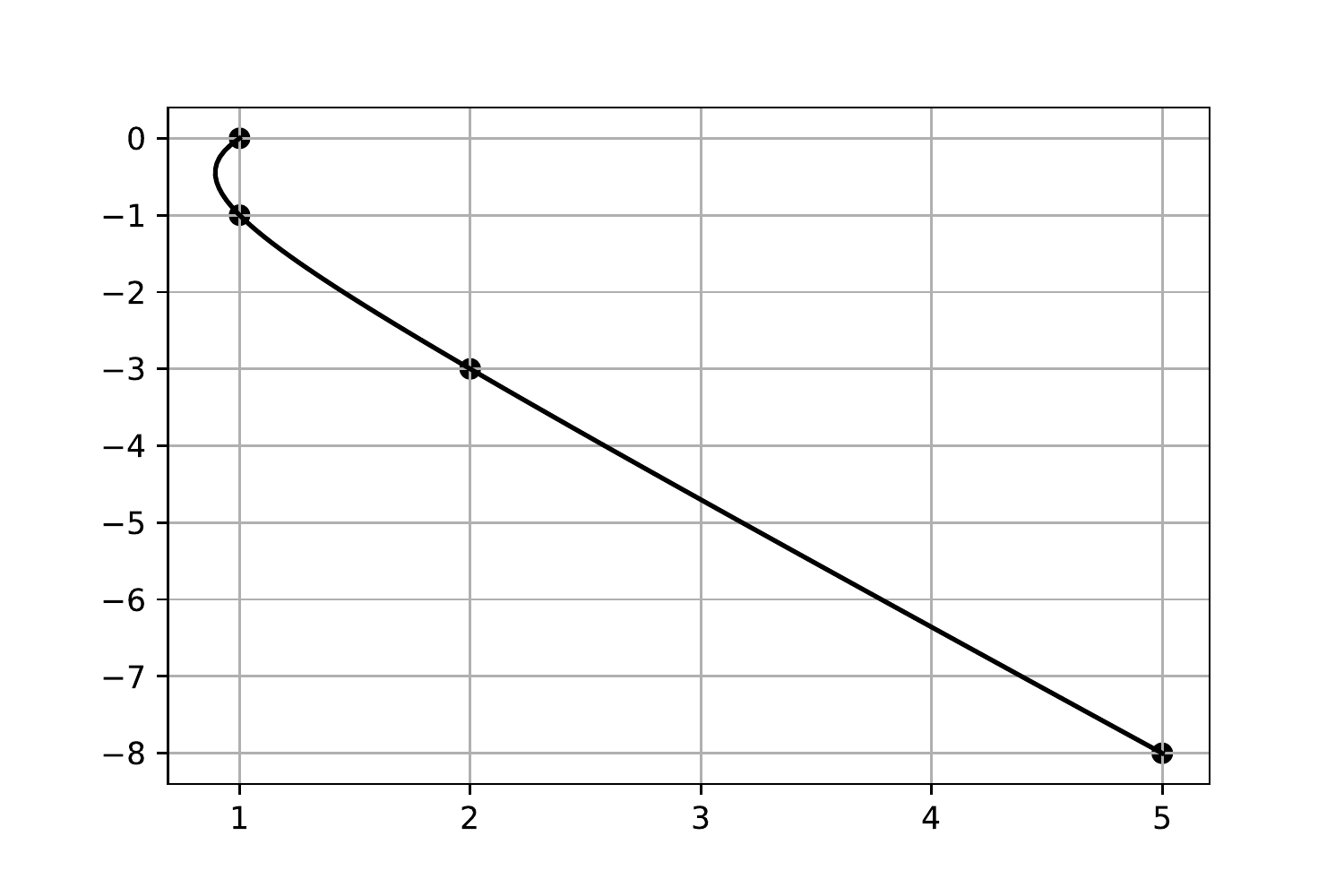}
\put(20,57){$p_1$}
\put(20,51){$p_2$}
\put(34,42){$p_3$}
\put(85,14){$p_4$}
\end{overpic}
\caption{The branch of $x^2-xy-y^2=1$ through $(1,0)$ showing
the lattice points on this curve with $y$ coordinate satisfying
$-8\le y\le 0$.  Here $p_1=(1,0)$, $p_2 =(1,-1)$,  $p_3 = (2,-3)$
and $p_4=(5,-8)$.}
\label{fig:hyp_example}
\end{figure}

\subsection{Examples when $k_0>0$.}
\label{subsec:k>0}
In looking for examples our results are sharp when the lower
curvature bound, $k_0$, is positive we need to find evenly
spaced points on a curve, $\C$, with constant curvature
$k_0$.  Such a curve is a ellipse and after an affine 
motion we may assume it is a circle centered at the origin
with radius $r= k_0^{-3/2}$ and having unit affine speed
parametrization 
$$
c(s) = ( r\cos(k_0^{1/2}s), r \sin(k_0^{1/2}s)).
$$

\begin{figure}[th]
\begin{overpic}[width=4in]{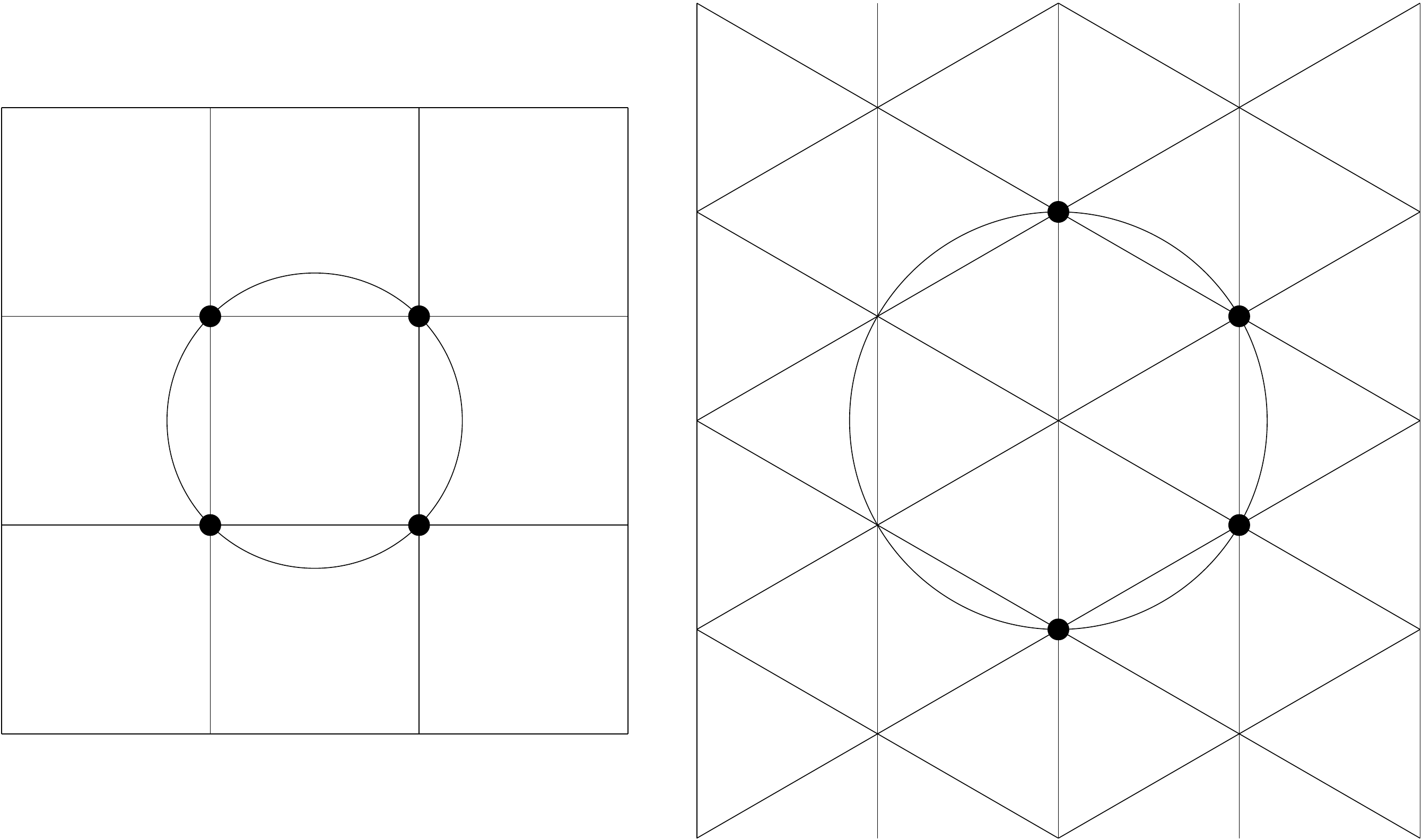}
\put(30,20){$p_1$}
\put(30,38){$p_2$}
\put(8,38){$p_3$}
\put(8,20){$p_4$}
\put(70,12){$p_1$}
\put(69,46){$p_4$}
\put(80,19){$p_2$}
\put(80,39){$p_3$}
\end{overpic}
\caption{In the case of a curve of positive
constant affine 
curvature up to an affine motion the only configurations
where there are four points satisfying from the lattice
on the curve  that they are evenly 
spaced with respect to affine arc length and
so that $\area(\triangle p_1p_2p_2)$ is half the
area of a fundamental region of the lattice
are shown here.
(In the second figure note that a fundamental region for the lattice
is not on of the triangles, but a parallelogram consisting of two
of the triangles.)}
\label{fig:circ_example}
\end{figure}

Let $\lat$ be a lattice so that there are four points
$p_1,p_2,p_3,p_4\in \lat \cap \C$ that are equally spaced
with respect to affine arc length and so that 
$H_{k_0}(L)= A_\lat/2$ where $L$ is the affine distance
between $p_j$ and $p_{j+1}$.  If equality holds in Theorem 
\ref{thm:rigid_lat} with $m\ge 2$ then four such points exist.
Proposition \ref{prop:equal_spaced} gives an affine motion
$\phi$ that preserves both $\lat$ and $\C$.  As $\C$ is
a circle centered at the origin this implies $\phi$ is
a rotation about the origin.  (The rotation being with
respect to the Euclidean structure that makes $\C$ into
a circle of radius $r$.)  If $\lat=\lat(v_0,v_1,v_2)$
and $\phi(v) = Mv$ (it is linear as it fixes the origin)
then the matrix of $M$ with respect to the basis $v_1$,
$v_2$ has integer entries and therefore its trace is
an integer.  As $M$ is a rotation, its matrix with respect
to the standard basis is $ \left[ \begin{matrix}
\cos(\theta)& -\sin(\theta)\\ \sin(\theta)& \cos(\theta)\end{matrix}\right]$ 
where $\theta$ is the angle of rotation.  The trace of
this is $2\cos(\theta)$ and therefore $2\cos(\theta)$
is an integer.  This implies $\theta$ is either an integral 
multiple of either  $\pi/3$ or $\pi/2$.  The only lattices
where we can get four points equally space along the circle
and so that $\area(\triangle p_2p_3p_4)$ is half the
area of a fundamental region of the lattice are shown
in Figure \ref{fig:circ_example}.  Therefore
Theorems \ref{thm:sharp_lat} and \ref{thm:rigid_lat} can only be
sharp 
when $\#(\lat \cap \C)$ is small to be precise $\#(\lat \cap \C)\le 6$.

\section*{Acknowledgments}
This paper is an outgrowth of work with Oggie  Trionov 
and conversions with him motivated me to apply affine
geometry to lattice point estimates.  I had useful 
conversations/correspondence with Dan Dix and
Grant Gustafson related to the results in Section
\ref{sec:initial_valus_problems}.  Much of the
this work was done while the author was on sabbatical 
leave from the University of South Carolina.


\end{document}